\documentclass[reqno]{amsart}
\usepackage{amsmath,amssymb}
\usepackage{amsmath,amssymb,cite}
\usepackage{graphics,epsfig,color}
\usepackage[mathscr]{euscript}
\usepackage{xcolor}


\theoremstyle{plain}
\newtheorem{theorem}{Theorem}[section]
\newtheorem{proposition}[theorem]{Proposition}
\newtheorem{lemma}[theorem]{Lemma}
\newtheorem{corollary}[theorem]{Corollary}

\newtheorem{definition}[theorem]{Definition}

\newtheorem{remark}[theorem]{Remark}

\numberwithin{equation}{section}
\numberwithin{theorem}{section}

\newcommand{\mc}[1]{{\mathcal #1}}
\newcommand{\mb}[1]{{\mathbf #1}}
\newcommand{\mf}[1]{{\mathfrak #1}}
\newcommand{\bs}[1]{{\boldsymbol #1}}
\newcommand{\bb}[1]{{\mathbb #1}}
\newcommand{\ms}[1]{{\mathscr #1}}

\newcommand{\<}{\langle}
\renewcommand{\>}{\rangle}
\renewcommand{\Cap}{{\rm cap}}

\title[$\Gamma$-expansion of large deviations rate functional]
{Full $\Gamma$-expansion of reversible Markov chains level two
large deviations rate functionals}

\author{Claudio Landim} \address{IMPA, Estrada Dona Castorina 110,
J. Botanico, 22460 Rio de Janeiro, Brazil and Univ. Rouen Normandie,
CNRS, LMRS UMR 6085, F-76000 Rouen, France.}  \email{landim@impa.br}

\author{Ricardo Misturini} \address{ UFRGS, Instituto de Matem\'atica,
Campus do Vale, Av. Bento Gon\c calves, 9500. CEP 91509-900, Porto
Alegre, Brasil} \email{ricardo.misturini@ufrgs.br}

\author{Federico Sau} \address{Università degli Studi di Trieste,
Dipartimento di Matematica e Geoscienze, Via Valerio 12/1, 34127
Trieste, Italy} \email{federico.sau@units.it}

\begin{document}

\begin{abstract}
Let $\Xi_n \subset \bb R^d$, $n\ge 1$, be a sequence of finite sets
and consider a $\Xi_n$-valued, irreducible, reversible,
continuous-time Markov chain $(X^{(n)}_t:t\ge 0)$.  Denote by
$\ms P(\bb R^d) $ the set of probability measures on $\bb R^d$ and by
$\ms I_n\colon \ms P(\bb R^d) \to [0,+\infty)$ the level two large
deviations rate functional for $X^{(n)}_t$ as $t\to\infty$. We present
a general method, based on tools used to prove the metastable
behaviour of Markov chains, to derive a full expansion of $\ms I_n$
expressing it as
$\ms I_n = \ms I^{(0)} \,+\, \sum_{1\le p\le \mf q} (1/\theta^{(p)}_n)
\, \ms I^{(p)}$, where
$\ms I^{(p)}\colon \ms P(\bb R^d) \to [0,+\infty]$ represent rate
functionals independent of $n$ and $\theta^{(p)}_n$ sequences such
that $\theta^{(1)}_n \to\infty$,
$\theta^{(p)}_n / \theta^{(p+1)}_n \to 0$ for $1\le p< \mf q$.  The
speed $\theta^{(p)}_n$ corresponds to the time-scale at which the
Markov chains $X^{(n)}_t$ exhibits a metastable behaviour, and the
$\ms I^{(p-1)}$ zero-level sets to the metastable states. To
illustrate the theory we apply the method to random walks in potential
fields.
\end{abstract}

\noindent
\keywords{Large deviations, $\Gamma$-convergence, Metastability,
  Continuous-time Markov processes on discrete state spaces}

\subjclass[2020]
{Primary 60F10; 60J27; 60J45, 60K35}

\maketitle
\thispagestyle{empty}

\section{Introduction}
\label{sec0}

With the aim of rigorously describing the metastable behaviours
emerging in all sorts of natural phenomena, the research on
metastability of Markov processes registered a considerable
development over the past decades. See, e.g., the books \cite{OV05,
BH15} and the review article \cite{lrev} and references therein
for recent accounts on the subject and the diverse approaches
developed. The common feature of all these mathematical theories rests
on the identification of two or more locally stable states, together
with a clear separation of scales: persistence in a state and
transitions from state to state may be observed over time spans of
different order.  Furthermore, when the system comprises a complex
energetic landscape, several metastable states and scales, naturally
ordered, arise (cf. \cite{bgl-lm} and references therein).

The goal of this article is that of developing an analytical framework
to describe the hierarchical metastability structure in the context of
continuous-time Markov chains $X_t^{(n)}$ on a finite state space
$\Xi_n$ of growing size, satisfying natural assumptions. In the spirit
of the recent works \cite{GM17,bgl-lm,l-mld}, we express the
metastable structure of $X_t^{(n)}$ through an expansion, in the limit
$n\to \infty$, of the corresponding level two large deviations rate
functional $\ms I_n$ \cite{dv75}, which describes the deviations of
the empirical measure of $X_t^{(n)}$ as $t\to \infty$ (see
\eqref{48}--\eqref{50}).  This expansion is informally represented as
\begin{equation}
	\label{f05}
	\ms I_n  \;=\; \ms I^{(0)} \;+\; \sum_{p=1}^{\mf q}
	\frac{1}{\theta^{(p)}_n} \, \ms I^{(p)}\;,
\end{equation}
where $\theta_n^{(p)}\in (0,\infty)$ are diverging constants referred to as \emph{speeds}, and $\ms I^{(p)}$ are functionals. 
 Since the topology employed to perform \eqref{f05} is that induced by the notion of $\Gamma$-convergence, we refer to \eqref{f05} as a \emph{full $\Gamma$-expansion of the functional $\ms I_n$} (Definition \ref{def1}).
 
 Our main results, Theorems \ref{mt1} and \ref{mt2}, 	establish that the speeds $\theta^{(p)}_n$ appearing in \eqref{f05} correspond to the time-scales at which the chain exhibits a
metastable behaviour. Furthermore, the functionals $\ms I^{(p)}$
coincide with the level two large deviations rate functionals of the
reduced Markov chain, which describes the evolution of the process
$X^{(n)}_t$ among the wells in the time-scale $\theta^{(p)}_n$. In
particular, the $0$-level sets of the functionals $\ms I^{(p)}$ is
formed by convex combinations of the metastable states of the Markov
chain in the time-scale $\theta^{(p+1)}_n$.

 Our original motivating example, discussed in detail in Section \ref{sec6}, is that of  random walks in potential fields evolving on  the  discrete $d$-dimensional torus of mesh size $1/n$,  whose metastable
 behaviour has already been considered in \cite{lmt2015, ls2018}. However,  instead of proving directly the full $\Gamma$-expansion  for this specific example, we develop a more general method primarily based on two assumptions, namely, conditions {\rm (H0)} and {\rm (H1)} in Section \ref{sec:Hs} below. Roughly speaking, {\rm (H0)} is what allows to start a metastability analysis, by defining a reduced model with a finite number of states, whereas {\rm (H1)}  on the convergence of mean rates (see \eqref{20}) for the trace process  encodes the  Markov behaviour of the reduced model. These two conditions lie at the core of the so-called martingale approach to metastability and have been verified for several metastable dynamics of interest, including zero-range processes and spin systems at low temperature; for more details, we refer to Remark \ref{r-2}.
The other conditions {\rm (H2) -- (H5)} which we assume for our first main result Theorem \ref{mt1} are either structural/natural (e.g., {\rm (H2)} on the nested structure of the support of the metastable states, or {\rm (H3)} on the finiteness of the number of scales) or technical (e.g., {\rm (H4b)} on the size of the wells, or {\rm (H5)} on fast relaxation within the well), to be checked in each example. Theorem \ref{mt2} replaces part of  {\rm (H4)} with condition {\rm (H6)}, more natural in the setting of condensing particle systems.

The key analytical tool in our analysis is the $\Gamma$-convergence of rate functionals. More precisely, since we consider the underlying state space $\Xi_n$ to be a discrete approximation of a limiting base space $\Xi$, we state our results in terms of $\Gamma$-convergence of non-negative functionals $\ms I_n$ on the space $\ms P(\Xi)$ of probability measures on $\Xi$.
For precise definitions and more details on the role of $\Gamma$-convergence in large deviations theory, we refer to the next section. Let us here  emphasize that condition {\rm (H5)}, which is not present in \cite{bgl-lm,l-mld}, is crucial in our proof of the \textit{$\Gamma$-liminf} inequality to infer some quantitative features for weakly converging sequences with finite energy.

As already mentioned, our results relate to those obtained in \cite{GM17,bgl-lm,l-mld}.	
More specifically, the expansion in \eqref{f05} has been proved in \cite{GM17} for overdamped Langevin
dynamics in which the potential has a finite number of local minima
and each local minimum is separated from the others by a unique
optimal saddle point. In \cite{bgl-lm, l-mld}, this expansion has been derived for finite
state Markov chains with a fixed state space  using tools developed in \cite{bl2,
	bl7} to prove the metastable behaviour of Markov chains. 

We emphasize that, while our example of random walks in potential fields may be thought of as the discrete analogue of the model considered in \cite{GM17}, our assumptions and strategy differ considerably from those in \cite{GM17}. Indeed, \cite{GM17}, which is based on semi-classical analysis techniques, strongly relies on the  non-overlap of low-lying eigenvalues of the associated generator, a condition ensured by the assumption of having at most one  minimum on each level. Our analysis based on capacity estimates does not prevent us to consider a setting with multiple wells at the same height. 

Moreover,  compared to the works \cite{bgl-lm,l-mld} concerned with Markov chains on a fixed state space $\Xi_n= \Xi$, our setting of growing state space covers a much broader list of examples. As a drawback, our analysis and assumptions are  more involved, compensating for the substantial simplification that, in case of $\Xi$ finite, all topologies on $\ms P(\Xi)$ are equivalent. For instance,  \cite{bgl-lm} merely requires the jump rates to satisfy a ratio assumption, a request which may be thought of as analogous to our condition {\rm (H1)} (see \cite[Eqs.\ (2.4) \& (2.10)]{bgl-lm}). In particular, \cite{bgl-lm} requires none of our technical conditions.

Before presenting in full detail our setting, assumptions and main results in the next section, we conclude this introduction by mentioning that, whereas we work in a reversible setting, we expect our results to carry over  to the non-reversible setting as well, via an approach similar to that in \cite{l-mld} and with an analogous set of assumptions. Along the same lines, we believe a similar analysis based on capacities  to yield a full $\Gamma$-expansion for non-reversible diffusions in potential fields, extending the results in \cite{GM17} obtained in the reversible context.

\section{Setting and main results}
\label{sec1}
\subsection{Setting}\label{sec:setting}
Let $\Xi$ be the closure of an open and bounded subset of $\bb R^d$,
or the $d$-dimensional torus.  Denote by $\Xi_n$ the discretization of
$\Xi$: $\color{blue} \Xi_n = \Xi \cap (n^{-1} \bb Z^d)$, $n\ge 1$,
where $n^{-1} \bb Z^d = \{k/n : k\in \bb Z^d\}$. The elements of
$\Xi_n$ are represented by the symbols $x$, $y$ and $z$.

Let $\color{blue} X^{(n)}_t$, $t\ge 0$, be a $\Xi_n$-valued,
reversible, irreducible, continuous-time Markov chain.  Denote the
jump rates by
$\color{blue} R_n\colon \Xi_n \times \Xi_n \to \bb R_+:=[0,\infty)$,
and the generator by $\ms L_n$, so that
\begin{equation}
\label{47}
{ \color{blue} (\ms L_n f)(x)}
\;=\; \sum_{y \in \Xi_n} R_n(x,y)\, \{\,
f(y)\,-\, f(x)\,\}\;, \quad f\colon \Xi_n\to \bb R\;.
\end{equation}
The holding times are represented by $\color{blue}\lambda_n(x)$ and
the jump probabilities by $\color{blue}p_n(x,y)$ so that
$R_n(x,y) = \lambda_n(x) \, p_n(x,y)$.  Let $\color{blue} \pi_n$ be the
unique stationary state, assumed to satisfy the detailed-balance
conditions.

\subsection*{Large deviations}

Denote by $\color{blue} \ms P(\Omega)$, $\Omega\subset\bb R^d$, the
set of probability measures on $\Omega$ endowed with the weak
topology, and by $L^{(n)}_t$ the empirical measure of the chain
$X^{(n)}_t$ defined as:
\begin{equation}
\label{48}
{ \color{blue} L^{(n)}_t}
\;:=\; \frac{1}{t} \int_0^t \delta_{X^{(n)}_s}\; ds \;,
\end{equation}
where $\color{blue} \delta_x$, $x\in \Xi$, represents the Dirac
measure concentrated at $x$. Thus, $L^{(n)}_t$ is a random element of
$\ms P(\Xi_n)$ and $L^{(n)}_t (\ms A)$, $\ms A \subset \Xi_n$, stands for the
proportion of time the process $X^{(n)}_t$ stays at $\ms A$ in the time
interval $[0,t]$.

As the Markov chain $X^{(n)}_t$ is irreducible, by the ergodic
theorem, for any starting point $x\in \Xi_n$, as $t\to\infty$, the
empirical measure $L^{(n)}_t$ converges in probability to the
stationary state $\color{blue} \pi_n$.

Donsker and Varadhan \cite{dv75} proved the associated large
deviations principle. More precisely, they showed that for any subset
$A$ of $\ms P(\Xi_n)$,
\begin{equation}
\label{50}
\begin{aligned}
-\, \inf_{\mu\in A^o} \ms I_n (\mu) \;&\le\; \liminf_{t\to \infty}
\min_{x\in \Xi_n}\, \frac{1}{t}\,
\log \mb P^{(n)}_x \big[ \, L^{(n)}_t \in A\,\big] \\
&\qquad \;\le\; \limsup_{t \to \infty} \max_{x\in \Xi_n}\,
\frac{1}{t}\, \log \mb P^{(n)}_x \big[ \, L^{(n)}_t \in A\,\big]
\;\le\; -\, \inf_{\mu\in \overline{A}} \ms I_n (\mu)\;.
\end{aligned}
\end{equation}
In this formula, $\color{blue} \mb P^{(n)}_{\! x}$,
$x\in \Xi_n$, stands for the distribution of the process $X^{(n)}_t$
starting from $x$, $A^o$, $\overline{A}$ represent the
interior, closure of $A$, respectively, and
$\ms I_n \colon \ms P(\Xi_n) \to [0,+\infty)$ is the level two large
deviations rate functional given by
\begin{equation}
\label{f4}
{\color{blue} \ms I_n (\mu)} \;:=\; \sup_u \,-\,
\sum_{x\in \Xi_n} \frac{(\ms L_n u)(x)}{u(x)} \; \mu(x)\;,
\end{equation}
where the supremum is performed over all functions
$u: \Xi_n \to (0,\infty)$.  Since we assumed reversibility and
$\pi_n(x)>0$ for all $x\in \Xi_n$, by \cite[Theorem 5]{dv75},
\begin{equation}
\label{f6}
\ms I_n(\mu)
\;=\; \< \,
\sqrt{f_n} \,,\, (-\, \ms L_n) \sqrt{f_n} \,\>_{\pi_n}\;, 
\end{equation}
for all measures $\mu \in \ms P(\Xi_n)$, where
$f_n(x) = \mu(x)/\pi_n(x)$.

\subsection*{$\Gamma$-convergence}

We investigate in this article the $\Gamma$-convergence of the action
functional $\ms I_n$. We first recall its definition.

Fix a Polish space $\mc X$ and a sequence $(U_n : n\in\bb N)$ of
functionals on $\mc X$, $U_n\colon \mc X \to [0,+\infty]$.  The
sequence $U_n$ \emph{$\Gamma$-converges} to the functional
$U\colon \mc X\to [0,+\infty]$ if and only if the two following
conditions are met:

\begin{itemize}
\item [(i)]\emph{$\Gamma$-liminf.} The functional $U$ is a
$\Gamma$-liminf for the sequence $U_n$: For each $x\in\mc X$ and each
sequence $x_n\to x$, we have that
\begin{equation*}
\liminf_{n\to \infty} U_n(x_n) \;\ge\; U(x) \;.
\end{equation*}

\item [(ii)]\emph{$\Gamma$-limsup.} The functional $U$ is a
$\Gamma$-limsup for the sequence $U_n$: For each $x\in\mc X$ there
exists a sequence $x_n\to x$ such that
\begin{equation}
\label{30}
\limsup_{n\to \infty} U_n(x_n) \;\le\; U(x)\;.
\end{equation}
\end{itemize}

The role of the $\Gamma$-convergence of rate functionals in large
	deviations theory has long been established \cite{mar}. In our context, $\Gamma$-convergence of the rate functional $\ms I_n$ to 
$\ms I^{(0)}$ implies that, for every closed subset $F$ and open subset $G$ of
$\ms P(\Xi)$,
\begin{align*}
\begin{aligned}
\limsup_{n\to \infty} \, \limsup_{t\to \infty}
\, \frac{1}{t} \,\max_{x\in \Xi_n} \, \log\, 
\mb P_{\! x}^{(n)} \Big[\, L_t^{(n)}
\,\in\, F \, \Big] \;&\le\; -\, \inf_{\mu\in F} \ms I^{(0)} (\mu) \;,
\\
\liminf_{n\to \infty} \, \liminf_{t\to \infty}
\, \frac{1}{t}  \, \min_{x\in \Xi_n} \,\log\, 
\mb P^{(n)}_{\! x} \Big[\, L_t^{(n)}
\,\in\, G \, \Big] \;&\ge\; -\, \inf_{\mu\in G} \ms I^{(0)} (\mu) \;,
\end{aligned}
\end{align*}
where we recall that $L_t^{(n)}$ are the empirical measures defined in \eqref{48}.

$\Gamma$-convergence was a key tool in a number of problems involving large deviations principles. Among these, \cite{m10} deals with a 
large deviations principle for conservation laws as the viscosity and
the noise vanish simultaneously. Here, since the large deviations rate
functional vanishes at all weak solutions of the conservation law, the large deviations
have to be examined at a smaller speed in order to
distinguish entropic solutions from non-entropic ones. This last step requires tools from the
theory of $\Gamma$-convergence.

A related problem involving a double-limit procedure allows to recover  the large
deviations rate functional of one-dimensional asymmetric exclusion
processes, previously obtained in \cite{j00, v04, vil08, qt21}. In this setting, one first derives a large deviations principle for
one-dimensional weakly asymmetric exclusion processes, and then prove
the $\Gamma$-convergence of the rate functional as the diffusivity
vanishes (or as the asymmetry diverges).   In a slightly
different context, the $\Gamma$-convergence of the rate functional for
a fast-slow dynamics of $N$ interacting diffusions is proven in
\cite{bbc19}.

The $\Gamma$-convergence of the large deviations rate functional
$\ms I_n$ introduced in \eqref{f4} has been examined recently in
\cite{bgl, bd22} in the context of interacting particles systems to
show that in the large deviations principle one can interchange the
limits $n\to \infty$ and $t\to\infty$, where $1/n$ represents the
interdistance between particles. A similar result has been obtained in
\cite{bgl3} for diffusions in a potential field.

\subsection*{$\Gamma$-expansion}

In view of the previous results, assume that the functional $\ms I_n$
$\Gamma$-converges to a functional denoted by $\ms I^{(0)}$. If the
$0$-level set of $\ms I^{(0)}$ is a singleton there is nothing to
add. Otherwise, it is natural to consider the $\Gamma$-convergence of
$\theta^{(1)}_n \, \ms I_n$, for some sequence
$\theta^{(1)}_n \to +\infty$.

By \eqref{f4}, the rate functional $\theta^{(1)}_n \, \ms I_n$
corresponds to the level two large deviations rate functional of the
Markov chain induced by the generator $\theta^{(1)}_n \, \ms L_n$,
that is, to the Markov chain $X^{(n)}_t$ speeded-up by
$\theta^{(1)}_n$.

Assume that there exists a sequence $\theta^{(1)}_n \to +\infty$ for
which $\theta^{(1)}_n \, \ms I_n$ $\Gamma$-converges to a functional
denoted by $\ms I^{(1)}$. Clearly, $\ms I^{(1)} (\mu)$ is finite only
if $\mu$ belongs to the $0$-level set of $\ms I^{(0)}$.  We say that
we obtained the correct speed $\theta^{(1)}_n$ whenever
$\ms I^{(1)} (\mu)$ is finite if, and only if, $\mu$ belongs to the
$0$-level set of $\ms I^{(0)}$. In other words,
$\ms I^{(1)} (\mu) < \infty$ if, and only if, $\ms I^{(0)}
(\mu)=0$. If this is not the case, it means that there is an
intermediate scale $\theta'_n$, $\theta'_n \to\infty$,
$\theta'_n/\theta^{(1)}_n\to 0$, which has been missed.

If the $0$-level set of $\ms I^{(1)}$ is a singleton we end the
analysis of $\ms I_n$. Otherwise, we may iterate the procedure.  We
summarize these considerations in the next definition.

\begin{definition}
\label{def1}
Consider a functional $\ms I_n\colon \ms P(\Xi) \to [0,+\infty)$. A
\emph{full $\Gamma$-expansion of $\ms I_n$} is given by the speeds
$(\theta^{(p)}_n, n\ge 1)$, $1\le p\le \mf q$, and the functionals
$\ms I^{(p)}\colon \ms P(\Xi) \to [0, +\infty]$, $0\le p\le \mf q$, if:
\begin{itemize}
\item [(a)] The speeds $\theta^{(1)}_n, \dots, \theta^{(\mf q)}_n$ are
sequences such that $\theta^{(1)}_n \to\infty$,
$\theta^{(p)}_n / \theta^{(p+1)}_n \to 0$, $0\le p<\mf q$;

\item [(b)] $\ms I_n$ $\Gamma$-converges to $\ms I^{(0)}$, and for
each $1\le p\le \mf q$, $\theta^{(p)}_n \, \ms I_n$ $\Gamma$-converges
to $\ms I^{(p)}$;

\item [(c)] For $0\le p<\mf q$, $\ms I^{(p+1)} (\mu)$ is finite if,
and only if, $\mu$ belongs to the $0$-level set of $\ms I^{(p)}$;

\item [(d)] The $0$-level set of $\ms I^{(\mf q)}$ is a singleton.
\end{itemize}
\end{definition}

In the next section, we present a set of conditions on the Markov chain
$X^{(n)}_t$ which guarantee that the rate functional $\ms I_n$ can be
expanded as in \eqref{f05}, as defined in Definition \ref{def1}.
\subsection{Hypotheses}\label{sec:Hs}
We start extending the definition of $\ms
I_n$ to $\ms P(\Xi)$ by setting $\ms I_n(\mu) = + \infty$ for $\mu\not
\in \ms P(\Xi_n)$. Similarly, elements of $\ms P(\Xi_n)$ are
considered as measures on $\Xi$.

Recall the notation introduced in the previous section.  The starting
point of the analysis of the large deviations rate functional
$\ms I_n$ is the assumption that $\ms I_n$ $\Gamma$-converges to a
functional $\ms I^{(0)}\colon \ms P(\Xi) \to [0,+\infty]$ whose
$0$-level set consists of convex combinations of a finite number of
Dirac measures:

\begin{itemize}
\item [{\bf (H0)}] The sequence of functionals
$\ms I_n\colon \ms P(\Xi) \to [0,+\infty]$ $\Gamma$-converges to a
functional $\ms I^{(0)}\colon \ms P(\Xi) \to [0,+\infty]$. Moreover,
there exists a finite set
$\color{blue}\ms M = \{x_1, \dots , x_{\mf n_1}\}$ such that
$\{ \mu \in \ms P(\Xi) : \ms I^{(0)} (\mu)=0\} = \{\sum_{j\in S_1}
\omega_j\, \delta_{x_j} : \omega\in \ms P(S_1)\}$, where
$\color{blue} S_1 = \{1, \dots, \mf n_1\}$.
\end{itemize}

This hypothesis is analysed in Remark \ref{rm-1}.

\subsection*{Trace process}

Fix a non-empty subset $W$ of $\Xi_n$.  Denote by $T^{W}_n(t)$ the
total time the process $X^{(n)}_t$ spends in $W$ in the time-interval
$[0,t]$:
\begin{equation*}
{\color{blue} T^{W}_n(t) } \; :=\; \int_{0}^{t}\,\chi_{_{W}}(X^{(n)}_s)\; ds\;,
\end{equation*}
where $\color{blue}\chi_{_{W}}$ represents the indicator function of the
set $W$. Denote by $S^{W}_n(t)$ the generalized inverse of $T^{W}_n(t)$:
\begin{equation*}
{\color{blue}  S^{W}_n(t) } \;=\;\sup\{\,s\ge 0\,:\,T^{W}_n(s)\le t\,\}\;.
\end{equation*}

The trace of $X$ on $W$, denoted by
$\color{blue}(Y^{W}_t : t \ge 0)$, is defined by
\begin{equation}
\label{100}
Y^{W}_t\;=\; X^{(n)}_{S^{W}_n(t)} \;;\;\;\;t \ge 0\;.
\end{equation}
By Propositions 6.1 and 6.3 in \cite{bl2}, the trace process is an
irreducible, $W$-valued continuous-time Markov chain, obtained by
turning off the clock when the process $X^{(n)}_t$ visits the set
$W^{c}$, that is, by deleting all excursions to $W^{c}$. For this
reason, it is called the trace process of $X^{(n)}_t$ on $W$.

\subsection*{Metastable structure}

We next assume that the Markov chain $X^{(n)}_t$ exhibits a metastable
behaviour at different time-scales. The formulation of this condition
requires some notation.

Denote by $\color{blue} D(\bb R_+, A)$, $A$ a finite set, the space of
right-continuous functions $\mf x: \bb R_+ \to A$ with left-limits
endowed with the Skorohod topology and the associated Borel
$\sigma$-algebra. Let $\mb P^{(n)}_{\! x}$, $x\in \Xi_n$, be the
probability measure on the path space $D(\bb R_+, \Xi_n)$ induced by
the Markov chain $X^{(n)}_t$ starting from $x$. Expectation with
respect to $\mb P^{(n)}_{\! x}$ is represented by
$\color{blue} \mb E^{(n)}_x$.

Consider a partition
$\color{blue} \mf W = \{\ms W^{(1)}_n, \dots, \ms W^{(\mf m)}_n,
\Delta_n\} $ of $\Xi_n$, and let
$\color{blue} \ms W_n \,:=\, \cup_{j=1}^{\mf m} \ms W^{(j)}_n$.
Denote by $\color{blue} \{Y^{\ms W_n}_t: t\ge 0\}$ the trace of
$\{X^{(n)}_t: t\ge 0\} $ on $\ms W_n$.  By equation (2.5) in
\cite{lrev}, the jump rates of the trace process $Y^{\ms W_n}_t$,
represented by
$R^{\mf W}_n \colon \ms W_n \times \ms W_n \to \bb R_+$, are given by
\begin{equation}
\label{o-40}
{\color{blue}  R^{\mf W}_n (x,y)}
\;=\; \lambda_n(x) \; \mb P^{(n)}_{\! x} \big[ H_y
= H^+_{\ms W_n} \big]\;, \quad x\,,\; y \in \ms W_n \,, 
\; x\not = y  \;.
\end{equation}
In this formula, $\lambda_n(x)$ is the holding time of the chain
$X^{(n)}_t$ at $x$, and $H_{\ms A}$, $H^+_{\ms A}$,
${\ms A}\subset \Xi_n$, represent the hitting and return time of
${\ms A}$:
\begin{equation} 
\label{o-201}
{\color{blue} H_{\ms A} } \;: =\;
\inf \big \{t>0 : X^{(n)}_t \in {\ms A} \big\}\;,
\quad
{\color{blue} H^+_{\ms A}} \;: =\;
\inf \big \{t>\tau_1 : X^{(n)}_t \in {\ms A} \big\}\; ,  
\end{equation}
where $\tau_1$ stands for the time of the first jump of the chain
$X^{(n)}_t$:
$\color{blue} \tau_1 = \inf\{t>0 : X^{(n)}_t \not = X^{(n)}_0\}$.

Let $r^{(\mf W)}_n(i,j)$, $j\neq i\in \{1, \dots, \mf m\}$, be the
mean rate at which the trace process $Y^{\ms W_n}_t$ jumps from
$\ms W^{(i)}_n$ to $\ms W^{(j)}_n$:
\begin{equation}
\label{20}
{\color{blue}  r^{\mf W}_n(i,j)}
\; := \; \frac{1}{\pi_n(\ms W^{(i)}_n)}
\sum_{x \in\ms W^{(i)}_n} \pi_n(x) 
\sum_{y\in\ms W^{(j)}_n} R^{\mf W}_n(x,y) \;.
\end{equation}

For two sequences of positive real numbers $(\alpha_n : n\ge 1)$,
$(\beta_n : n\ge 1)$, $\color{blue} \alpha_n \prec \beta_n$ or
$\color{blue} \beta_n \succ \alpha_n$ means that
$\lim_{n\to\infty} \alpha_n/\beta_n = 0$.  The main hypothesis of the
article reads as follows:
\smallskip
\begin{itemize}
\item[\bf (H1)] There exist
$\color{blue} \mf q \ge 1$, time-scales
$\color{blue} 1 \prec \theta^{(1)}_n\prec \cdots \prec \theta^{(\mf
	q)}_n$, and partitions
$\color{blue} \mf V_p = \{\ms V^{p,1}_n, \dots, \ms V^{p, \mf n_p}_n,
\Delta^{(p)}_n\}$ of $\Xi_n$, $1\le p\le \mf q$, such that the limit
\begin{equation}
\label{01a}
{\color{blue}  r^{(p)} (i,j)} \;:=\;  \lim_{n\to\infty} \theta^{(p)}_n
\, r^{\mf V_p}_n(i, j)
\end{equation}
exists and is finite  for every $1\le p\le \mf q$,
$j\not = i\in {\color{blue} S_p := \{1, \dots, \mf n_p\}}$.
Moreover, $r^{(p)} (i,j)>0$ for some $j\neq i \in S_p$\;.
\end{itemize}

In our motivating example of the random walk in a potential field $F$, discussed
	in Section \ref{sec6}, the speeds $\theta_n^{(p)}$
	 are given by $\theta_n^{(p)}=n\, \exp(nd_p)$, 
 where $d_1<d_2<\ldots < d_{\mathfrak q}$ are the depths of valleys of  the potential $F$.

\begin{remark}
\label{r-2}
Hypothesis (H1) on the mean jump rates  \eqref{20} is condition (H0) in
\cite{bl2}. It is the main condition to be proved in order to
establish the metastable behaviour of a Markov chain by the martingale
method. It has been derived in several different contexts: for
condensing zero-range processes \cite{bl3, l2014, s2018}, inclusion
processes \cite{bdg17, kim21, ks21}, Blume-Capel, Ising and Potts
models \cite{ll16, llm19, ks21, ks21b, ks22, ks22b}.
\end{remark}

\begin{remark}
\label{r-3}
By \cite[Remark 2.9]{bl2}, for reversible dynamics the mean jump rate
$r^{\mf W}_n(i,j)$ introduced in \eqref{20} can be expressed as sums
and differences of capacities between sets of the form
$\cup_{k\in A} \ms W^{(k)}_n$ and $\cup_{\ell\in B} \ms W^{(\ell)}_n$
for disjoint subsets $A$, $B$ of $\{1, \dots, \mf m\}$. Hence, the
proof of conditions (H1) reduces to the computation of capacities.
\end{remark}

Let $\color{blue} \bb X^{(p)}_t$ be the $S_p$-valued Markov chain
induced by the rates $r^{(p)} (i,j)$.
Denote by $\color{blue} \mf R^{(p)}_1, \dots, \mf R^{(p)}_{\mf u_p}$
the closed irreducible classes of $\bb X^{(p)}_t$, and by
$\color{blue} \mf T_p$ the transient states. Clearly,
$\{\mf R^{(p)}_1, \dots $, $\mf R^{(p)}_{\mf u_p}, \mf T_p\}$ forms
a partition of the set $S_p$. 

\smallskip
\begin{itemize}
\item[\bf (H2)] For $1\le p < \mf q$, assume that the number of closed
irreducible classes of the Markov process $\bb X^{(p)}_t$ corresponds
to the number of sets $\ms V^{p+1,j}_n$ in the partition
$\mf V_{p+1}$.  In other words, we assume that
$\mf u_p = \mf n_{p+1}$, and that for $m\in S_{p+1}$,
\begin{equation}
\label{59}
\ms V^{p+1,m}_n
\;= \; \bigcup_{j\in \mf R^{(p)}_m} \ms V^{p,j}_n\;.
\end{equation}
\end{itemize}

\begin{remark}
\label{r-4}
This condition is natural and not restrictive. As
$\theta^{(p)}_n \prec \theta^{(p+1)}_n$, it states that on the longer
time-scale $\theta^{(p+1)}_n$ the Markov chain has less metastable sets
and that the metastable sets on the time-scale $\theta^{(p+1)}_n$ are
formed by unions of metastable sets on the time-scale
$\theta^{(p)}_n$.
\end{remark}

Since, by hypothesis (H1), there exists at least one pair $(j,k)$,
$k\neq j\in S_p$, such that $r^{(p)}(j,k)>0$, either $j$ is a
transient state or $j$ and $k$ belong to the same closed irreducible
class. Therefore, the number of recurrent classes for the chain
$\bb X^{(p)}_t$ (viz. $\mf u_p$ which is assumed to be equal to
$\mf n_{p+1}$) is strictly smaller than the number of $S_p$ elements
(viz. $\mf n_p$), that is $\mf n_{p+1}<\mf n_p$. In other words, the
number of sets $\ms V^{p,j}_n$ in the partition $\mf V_{p}$
strictly decreases with $p$.

Next condition states that the recurrence procedure \eqref{59} which
defines the sets $\ms V^{p,j}_n$ ends at step $\mf q$.  See Remark
\ref{rm2} below.

\smallskip
\begin{itemize}
\item [\bf (H3)] Assume that the Markov chain $\bb X^{(\mf q)}_t$ has
a unique closed irreducible class:
$\mf n_{\mf q+1}=\mf u_{\mf q} = 1$. In consequence, set
$\color{blue} S_{\mf q+1} = \{1\}$.
\end{itemize}

We extend definition \eqref{59} to $p=\mf q$, setting
\begin{equation*}
{\color{blue}  \ms V_n^{\mf q+1,1} }
\;:=\; \bigcup_{j\in \mf R^{(\mf q)}_1} \ms V_n^{\mf q,j} \;.
\end{equation*}

Recall from condition (H0) the definition of the points $x_j$,
$j\in S_1$.  Condition (H4) below asserts that the sets
$\ms V^{1,j}_n$, $j\in S_1$, are wells. Indeed, on the one hand the
sets $\ms V^{1,j}_n$ contain open balls of radius $\epsilon$, and on
the other hand the stationary state $\pi_n$ conditioned to
$\ms V^{1,j}_n$ converges to the Dirac measure $\delta_{x_j}$. 

\smallskip
\begin{itemize}
\item [\bf (H4a)] Let
$\color{blue} \ms V^{(p)}_n := \cup_{j\in S_p} \ms V^{p,j}_n$,
$1\le p\le \mf q+1$. We assume that $\pi_n(\ms V^{(1)}_n)\to 1$. Let
$\color{blue} \pi^{1,j}_n$, $j\in S_1$, be the stationary state
$\pi_n$ conditioned to $\ms V^{1,j}_n$. Then, for each $j\in S_1$,
$\pi^{1,j}_n \to \delta_{x_j}$ weakly.

\item [\bf (H4b)] There exists $\epsilon>0$ such that
$\ms V^{1,j}_n \supset B_\epsilon(x_j) \cap \Xi_n$ for all $j\in S_1$,
$n\ge 1$, where $\color{blue} B_\epsilon(x)$ stands for the ball of
radius $\epsilon$ centered at $x$. Here, $\epsilon$ is chosen
sufficiently small so that
$B_\epsilon(x_j) \cap B_\epsilon(x_k) =\varnothing$ for
$k\neq j \in S_{1}$.
\end{itemize}

For a subset $\ms A$ of $\Xi_n$, denote by $D_n(\ms A\,,\,\cdot\,)$
the Dirichlet form induced by the generator $\ms L_n$ restricted to
$\ms A$: for all $h\colon \ms A \to \bb R$,
\begin{equation}\label{eq:restricted-DF}
{\color{blue} D_n(\ms A, h)} \;:=\;
\frac{1}{2} \frac{1}{\pi_n(\ms A)} \, \sum_{x\in \ms A}
\sum_{y\in \ms A\setminus \{x\}} \pi_n(x) \, R_n(x,y)\,
[\, h(y) - h(x)\,]^2\;.
\end{equation}

Next assumption asserts that the process $X^{(n)}_t$ equilibrates much
faster inside the wells $\ms V^{1,j}$ than it moves among the
wells. The hypothesis reminds the existence of a spectral gap for the
process reflected at the boundary of $\ms V^{1,j}$, but is not quite.
The reader familiar with the path lemma in hydrodynamics will easily
estimate in concrete examples the left-hand side of the equation
below in terms of the right-hand side. We give a crude estimate in
Lemma \ref{l04}.

\smallskip
\begin{itemize}
\item[\bf (H5)] There exists a sequence $\beta_n\prec \theta^{(1)}_n$ such
that 
\begin{equation*}
\max_{x\in \ms V^{1,j}_n} \sum_{y\in \ms V^{1,j}_n} \pi^{1,j}_n(y) \, 
\{ \, h(y) \,-\,  h(x) \,\}^2
\;\le\; \beta_n\, D_n(\ms V^{1,j}_n , h)
\end{equation*}
for all $h\colon \ms V^{1,j}_n \to \bb R$, $j\in S_1$. 
\end{itemize}

\begin{remark}
\label{rm1}
Assumption (H5) is needed in Proposition \ref{l03}.  It follows from the
proof of this result that the maximum in condition (H5) can be
restricted to $x\in \ms V^{1,i}_n$ for which
$R^{\mf V_1}_n(x, \cup_{k\in S_1\setminus\{j\}} \ms V^{1,k}_n)>0$, that is
to the points $x$ which belong to the inner boundary of
$\ms V^{1,i}_n$.
\end{remark}

\begin{remark}
\label{rm4}
In the definition of metastability (cf. \cite{lrev}) one requires the
trace process $Y^{\ms V^{(p)}_n}_t$ accelerated by $\theta^{(p)}_n$ to
be well approximated by a $S_p$-valued Markov chain and  the time
spent by the process $X^{(n)}_t$ in the complement of $\ms V^{(p)}_n$
to be negligible.

The fact that $Y^{\ms V^{(p)}_n}_t$ is well approximated by a
$S_p$-valued Markov chain follows from condition (H1). The
negligibility of  the time spent in the complement of
$\ms V^{(p)}_n$ does not appear explicitly in the hypotheses, but it is
hidden in conditions (H0) and (H4a).
\end{remark}

\subsection*{Stationary measures}

Fix $1\le p\le \mf q$, and recall that we denote by
$\mf R^{(p)}_1, \dots, \mf R^{(p)}_{\mf n_{p+1}}$ the closed
irreducible classes of $\bb X^{(p)}_t$. Denote by
$\color{blue} M^{(p)}_m \in \ms P(\mf R^{(p)}_m)$, $m\in S_{p+1}$, the
stationary state of the Markov chain $\bb X^{(p)}_t$ restricted to the
closed irreducible class $\mf R^{(p)}_m$.

Let $\pi^{(1)}_j \in \ms P(\Xi)$, $j\in S_1$, be the measure given by
$\color{blue} \pi^{(1)}_j := \delta_{x_j}$, and define recursively
$\pi^{(p+1)}_j \in \ms P(\Xi)$, $1\le p\le \mf q$, $j\in S_{p+1}$, as
\begin{equation}
\label{09}
{\color{blue} \pi^{(p+1)}_j (\,\cdot\,) } \;:=\;
\sum_{k\in S_p : \ms V^{p,k}_n \subset \ms V^{p+1,j}_n}
M^{(p)}_j (k) \, \pi^{(p)}_k (\,\cdot\,)\;. 
\end{equation}
We could have represented the sum more concisely as a sum carried over
$k\in \mf R^{(p)}_j$, but it seems easier to understand its meaning by
stating that we are summing over all indices $k$ for which
$\ms V^{p,k}_n$ is a subset of $\ms V^{p+1,j}_n$.

Clearly, the measures $\pi^{(p)}_k$, $k\in S_p$, are convex
combinations of the Dirac measures $\delta_{x_i}$, $i\in S_1$. A
precise formula requires some notation.  By \eqref{59}, each set
$\ms V^{p,j}_n$, $1\le p \le \mf q$, $j\in S_p$, is the union of sets
$\ms V^{1,i}_n$, called for this reason basal sets. Let
\begin{equation*}
{\color{blue} S_{p,j} := } \big\{\, i\in S_1 : \ms V^{1,i}_n
\subset \ms V^{p,j}_n \big\}\;, \;\;  2\le p \le \mf q+1\;, \;\; j\in
S_p\;. 
\end{equation*}
Hence, $S_{p,j} \subset S_1$ represents the set of indices $i$ of the
basal sets $\ms V^{1,i}_n$ which are contained in $\ms
V^{p,j}_n$. Note that this set does not depend on $n$.

If $i$ belongs to $\cup_{j\in S_p} S_{p,j}$, then $\ms V^{1,i}_n$ is
contained in a well $\ms V^{2,k}_n$ itself contained in
$\ms V^{(p)}_n$. We denote by $\color{blue} \mf a_{2,i} \in S_2$ the
index of this well so that
$\ms V^{1,i}_n \subset \ms V^{2,\mf a_{2,i}}_n \subset \ms V^{(p)}_n$.
Similarly, for each $2\le q<p$, denote by $\color{blue} \mf a_{q,i}$
the index $k\in S_q$ of the well $\ms V^{q,k}_n$ which contains
$\ms V^{1,i}_n$ and is contained in $\ms V^{(p)}_n$:
$\ms V^{1,i}_n \subset \ms V^{q,\mf a_{q,i}}_n \subset \ms V^{(p)}_n$.
By the definition \eqref{59} of the wells,
$i \in \mf R^{(1)}_{\mf a_{2,i}}$,
$\mf a_{2,i} \in \mf R^{(2)}_{\mf a_{3,i}}, \dots, \mf a_{p-1,i} \in
\mf R^{(p-1)}_{j}$. For $i\in S_{p,j}$, let
\begin{equation}
\label{16}
{\color{blue} m_{p,j} (i)}  \;:=\; M^{(1)}_{\mf a_{2,i}}(i) \;
M^{(2)}_{\mf a_{3,i}}(\mf a_{2,i}) \; \cdots\;
M^{(p-2)}_{\mf a_{p-1,i}}(\mf a_{p-2,i} )\;
\ M^{(p-1)}_{j}(\mf a_{p-1,i})\;.
\end{equation}
It is easy to check that $m_{p,j}$ is a probability measure on
$S_{p,j}$: $m_{p,j} \in \ms P(S_{p,j})$.

At the end of Section \ref{sec3}, we show that
\begin{equation}
\label{10}
\pi^{(p)}_j (\,\cdot\,) \;=\;
\sum_{k\in S_1: \ms V^{1,k}_n \subset \ms V^{p,j}_n}
m_{p,j} (k) \,  \delta_{x_k}  (\,\cdot\,) \;, \quad
2\le p\le \mf q+1 \;,\;\; j\in S_p \;.
\end{equation}
Here again we could have represented the sum as one over $k\in
S_{p,j}$. 

Let $\color{blue} \pi^{p,j}_n$, $j\in S_p$, $1\le p\le \mf q$ be the
stationary state $\pi_n$ conditioned to $\ms V^{p,j}_n$.  By Lemma
\ref{l11}, for each $j\in S_p$, $\pi^{p,j}_n$ converges weakly to
$\pi^{(p)}_j$.

\subsection{Main results}

We are now in a position to state the main results of this article.
Fix $1\le p\le \mf q$.  Denote by $\color{blue}\bb L^{(p)}$ the
generator of the $S_p$-valued Markov chain $\bb X^{(p)}_t$. Let
$\bb I^{(p)} \colon \ms P (S_p) \to [0,+\infty)$ be the level two
large deviations rate functional of $\bb X^{(p)}_t$ given by
\begin{equation}
\label{40}
{\color{blue} \bb I^{(p)} (\omega) } \, :=\,
\sup_{\mb h} \,-\,  \sum_{j\in S_p} \omega_j \,
\frac{1}{\mb h (j)} \, (\bb L^{(p)} \mb h)(j)  \;,
\end{equation}
where the supremum is carried over all functions
$\mb h:S_p \to (0,\infty)$.  Denote by
$\ms I^{(p)} \colon \ms P(\Xi) \to [0,+\infty]$ the functional given
by
\begin{equation}
\label{o-83b}
{\color{blue} \ms I^{(p)} (\mu) } \, :=\,
\left\{
\begin{aligned}
& \bb I^{(p)} (\omega)   \quad \text{if}\;\;
\mu = \sum_{j\in S_p} \omega_j \, \pi^{(p)}_j \;\; \text{for}\;\;
\omega \in \ms P (S_p)\;,  \\
& +\infty \quad\text{otherwise}\;.
\end{aligned}
\right.
\end{equation}

The main result of the article reads as follows. Recall that
$X^{(n)}_t$, $t\ge 0$, is a $\Xi_n$-valued, reversible, irreducible,
continuous-time Markov chain, where $\Xi_n$ is the discretization of
an open and bounded subset $\Xi$ of $\bb R^d$.

\begin{theorem}
\label{mt1}
Assume that conditions {\rm (H0) -- (H5)} are in force. Then, a full
$\Gamma$-expansion of $\ms I_n$ as in Definition \ref{def1} is given
by the speeds $(\theta^{(p)}_n, n\ge 1)$, $1\le p\le \mf q$, and
functionals $\ms I^{(p)}\colon \ms P(\Xi) \to [0, +\infty]$,
$0\le p\le \mf q$, appearing in conditions {\rm(H0)} and {\rm (H1)},
and \eqref{o-83b}.
\end{theorem}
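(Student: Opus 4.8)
The plan is to verify conditions (a)--(d) of Definition \ref{def1} for the proposed speeds and functionals, with the bulk of the work being the $\Gamma$-convergence statement in (b), namely that $\theta^{(p)}_n\, \ms I_n$ $\Gamma$-converges to $\ms I^{(p)}$ for each $1\le p\le\mf q$. Condition (a) is immediate from hypothesis (H1), and the $\Gamma$-convergence of $\ms I_n$ to $\ms I^{(0)}$ is hypothesis (H0). For (c) and (d) one observes that, by the variational formula \eqref{40}, $\bb I^{(p)}(\omega)=0$ if and only if $\omega$ is stationary for $\bb X^{(p)}_t$, hence if and only if $\omega$ is a convex combination of the stationary measures $M^{(p)}_m$, $m\in S_{p+1}$, of the closed irreducible classes; translating through \eqref{o-83b} and the recursive definition \eqref{09}, the $0$-level set of $\ms I^{(p)}$ is exactly $\{\sum_{m\in S_{p+1}}\omega_m\, \pi^{(p+1)}_m:\omega\in\ms P(S_{p+1})\}$, which is the set of $\mu$ with $\ms I^{(p+1)}(\mu)<\infty$ by \eqref{o-83b} for level $p+1$; and at $p=\mf q$, condition (H3) forces this set to be the singleton $\{\pi^{(\mf q+1)}_1\}$.

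For the $\Gamma$-liminf bound of (b), I would argue as follows. Given $\mu_n\to\mu$ with $\liminf\theta^{(p)}_n\,\ms I_n(\mu_n)<\infty$, first use (H0) together with the lower bound $\theta^{(p)}_n\,\ms I_n(\mu_n)\ge \ms I_n(\mu_n)$ (for $n$ large, since $\theta^{(p)}_n\to\infty$) to conclude $\ms I^{(0)}(\mu)=0$, so $\mu$ is supported on $\ms M$; more work, using (H4a) and the finiteness of $\theta^{(p)}_n\,\ms I_n(\mu_n)$ at the finer levels, pins $\mu$ down to the form $\sum_{j\in S_p}\omega_j\,\pi^{(p)}_j$. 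Then one relates $\ms I_n(\mu_n)$, via the Dirichlet-form expression \eqref{f6}, to the Dirichlet form of the trace process on $\ms V^{(p)}_n$: the key point is that test functions $\sqrt{f_n}$ can be projected onto functions constant on each well $\ms V^{p,j}_n$ at a cost controlled by hypothesis (H5) (this is the content of the auxiliary Proposition \ref{l03} referenced in Remark \ref{rm1}), so that
\begin{equation*}
\theta^{(p)}_n\,\ms I_n(\mu_n)\;\ge\;(1-o(1))\;\theta^{(p)}_n\,\<\sqrt{g_n},(-R^{\mf V_p}_n)\sqrt{g_n}\>_{\pi_n\restriction\ms V^{(p)}_n}\;-\;o(1),
\end{equation*}
where $g_n$ is the density of the projected measure. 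Since the collapsed rates $\theta^{(p)}_n\, r^{\mf V_p}_n(i,j)$ converge to $r^{(p)}(i,j)$ by (H1), and the conditioned measures $\pi^{p,j}_n$ converge to $\pi^{(p)}_j$ by Lemma \ref{l11}, the right-hand side converges to $\bb I^{(p)}(\omega)=\ms I^{(p)}(\mu)$ by lower semicontinuity of the finite-state rate functional.

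For the $\Gamma$-limsup bound, given $\mu=\sum_{j\in S_p}\omega_j\,\pi^{(p)}_j$ I would build the recovery sequence explicitly: choose $\omega^n\to\omega$ in $\ms P(S_p)$ realizing the (easy) limsup for the finite-state functional $\bb I^{(p)}$ with a near-optimal test function $\mb h_n$ on $S_p$, set $\mu_n:=\sum_{j\in S_p}\omega^n_j\,\pi^{p,j}_n\in\ms P(\ms V^{(p)}_n)\subset\ms P(\Xi_n)$, which converges weakly to $\mu$ by Lemma \ref{l11} and (H4a), and lift $\mb h_n$ to a function $u_n$ on $\Xi_n$ that is constant equal to $\mb h_n(j)$ on each $\ms V^{p,j}_n$ and suitably interpolated on $\Delta^{(p)}_n$ (for instance the equilibrium-potential interpolation, so that the Dirichlet energy on the complement matches the trace-process energy); then \eqref{f4} evaluated at $u_n$, after multiplication by $\theta^{(p)}_n$, is asymptotically $-\sum_j\omega_j\,(\bb L^{(p)}\mb h)(j)/\mb h(j)$, and optimizing over $\mb h$ gives the bound $\limsup\theta^{(p)}_n\,\ms I_n(\mu_n)\le\bb I^{(p)}(\omega)$. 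The main obstacle, and where the metastability hypotheses do the real work, is the $\Gamma$-liminf step: one must show that \emph{no} sequence $\mu_n\to\mu$ can beat the trace-process energy, which requires the projection estimate controlled by (H5) to handle mass and variation inside the wells, and the negligibility of $\Delta^{(p)}_n$ (hidden in (H0), (H4a), cf. Remark \ref{rm4}) to handle the part of $\mu_n$ living outside $\ms V^{(p)}_n$; iterating this through the levels $1,\dots,p$ to force $\mu$ into the correct low-dimensional simplex is the delicate bookkeeping part of the argument.
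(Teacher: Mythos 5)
Your treatment of conditions (a), (c), (d) and the overall inductive structure for (b) matches the paper (conditions (c)--(d) are Lemma \ref{l39} plus (H3)), and your reduction of the \emph{$\Gamma$-liminf} to measures of the form $\sum_j\omega_j\,\pi^{(p)}_j$ via the previous level of the expansion is exactly the paper's argument. For the liminf itself your sketch is in the right spirit but imprecise about the mechanism: the paper does not bound the Dirichlet form below by a collapsed Dirichlet form of a projected density; it plugs the harmonic extension of a well-constant test function $\mb h$ into the variational formula \eqref{f4} (a legitimate lower bound), reduces to the trace generator via the harmonic identity, and then the whole content of (H5) and Proposition \ref{l03} is to show that the $\mu_n$-weighted mean jump rates $\theta^{(p)}_n\sum_{x\in\ms V^{p,j}_n}\mu_n(x)R^{(p)}_n(x,\ms V^{p,k}_n)$ converge to $\omega_j\,r^{(p)}(j,k)$ --- a statement about linear functionals of $\mu_n$, not about quadratic forms.

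The genuine gap is in your \emph{$\Gamma$-limsup}. Formula \eqref{f4} is a \emph{supremum} over test functions, so evaluating it at one particular $u_n$ (the lift of $\mb h_n$) gives a \emph{lower} bound on $\ms I_n(\mu_n)$; it cannot yield $\limsup_n\theta^{(p)}_n\,\ms I_n(\mu_n)\le\bb I^{(p)}(\omega)$, and "optimizing over $\mb h$" only reproduces the liminf inequality. To get an upper bound one must control $\ms I_n$ through the Donsker--Varadhan identity \eqref{f6}, i.e.\ the full Dirichlet form of $\sqrt{d\mu_n/d\pi_n}$, and here your candidate $\mu_n=\sum_j\omega^n_j\,\pi^{p,j}_n$ fails: its density is piecewise constant on the wells and vanishes on $\Delta^{(p)}_n$, so $\sqrt{f_n}$ jumps at the well boundaries and the Dirichlet form picks up uncontrolled boundary terms. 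The paper circumvents this by (i) proving the limsup only for the \emph{trace} rate functional $\ms I^{(p)}_n(\mu_n)$ (Proposition \ref{l16}), (ii) taking the optimal trace test function $h_n$, extending it harmonically to $u_n$, and replacing $\mu_n$ by the stationary state $\nu_n$ of the generator tilted by $u_n$, for which $\ms I_n(\nu_n)\le\ms I^{(p)}_n(\mu_n)$ by \cite[Proposition C.1]{l-mld}, and (iii) proving $\nu_n\to\mu$ by a compactness/contradiction argument that uses the already-established \emph{$\Gamma$-liminf} together with (H4b) (this is where the wells must contain macroscopic balls, cf.\ Remark \ref{rm3}). Steps (ii) and (iii) are the essential ideas missing from your proposal; without them the recovery sequence is not actually shown to have small energy for the full dynamics.
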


This theorem  provides an expansion of the
large deviations rate functional $\ms I_n$ which can be written as in
\eqref{f05}.  Therefore, the rate functional $\ms I_n$ encodes all the
characteristics of the metastable behaviour of the chain
$X^{(n)}_t$. The time-scales $\theta^{(p)}_n$ appear as the speeds of
the expansion, and, by \eqref{o-83b} and Lemma \ref{l39}, the convex
hull of the metastable states $\pi^{(p)}_j$, $j\in S_p$, form the
$0$-level set of the rate functional $\ms I^{(p-1)} (\,\cdot\,)$.

\begin{remark}
\label{rm-1}
For many dynamics the $0$-level set of the first term(s) of the
expansion of rate functional $\ms I_n$ are not convex combinations of
Dirac measures.  More precisely, there may exist speeds
$(\kappa^{(p)}_n, n\ge 1)$, $1\le p\le \mf r$, and functionals
$\ms J^{(p)}\colon \ms P(\Xi) \to [0, +\infty]$, $0\le p\le \mf r$
satisfying conditions (a) -- (c) of Definition \ref{def1} and such
that
\begin{itemize}
\item [(d')] There exists a finite set
$\ms M = \{x_1, \dots , x_{\mf n_1}\}$ such that
$\{ \mu \in \ms P(\Xi) : \ms J^{(\mf r)} (\mu)=0\} = \{\sum_{j\in S_1}
\omega_j\, \delta_{x_j} : \omega\in \ms P(S_1)\}$, where
$S_1 = \{1, \dots, \mf n_1\}$.
\end{itemize}

Condition (d') states that the rate functional
$\ms K_n : = \kappa^{(\mf r)}_n\, \ms I_n$ satisfies condition (H0).
In view of \eqref{f4}, the functional $\ms K_n $ is the level two
large deviations rate functional of the Markov chain induced by the
generator $\kappa^{(\mf r)}_n\, \ms L_n$, that is, the rate functional
associated to the process $X^{(n)}_t$ speeded-up by
$\kappa^{(\mf r)}_n$. Denote this process by $Y^{(n)}_t$:
$Y^{(n)}_t = X^{(n)}_{t \,\kappa^{(\mf r)}_n}$.

If the process $Y^{(n)}_t$ and its associated rate-functional $\ms
K_n$ also fulfill hypotheses (H1) -- (H5), by Theorem \ref{mt1},
\begin{equation*}
\ms K_n  \;=\; \ms J^{(\mf r)} \;+\; \sum_{p=1}^{\mf q}
\frac{1}{\theta^{(p)}_n} \, \ms I^{(p)} \; \cdot
\end{equation*}
Hence, as $\ms K_n : = \kappa^{(\mf r)}_n\, \ms I_n$, the full
$\Gamma$-expansion of $\ms I_n$ reads as
\begin{equation}
\label{f05-general}
\ms I_n  \;=\; \ms J^{(0)} \;+\; \sum_{p=1}^{\mf r}
\frac{1}{\kappa^{(p)}_n} \, \ms J^{(p)} 
\;+\; \sum_{q=1}^{\mf q}
\frac{1}{\kappa^{(\mf r)}_n\, \theta^{(q)}_n} \, \ms I^{(q)} \; .
\end{equation}

This article presents a general method to derive the last terms
$\ms I^{(q)}$, $1\le q\le \mf q$, of the expansion. The analysis of
the first ones $\ms J^{(p)}$, $1\le p\le \mf r$, has to be carried out
by other methods. The example of random walks in a potential field,
discussed in Section \ref{sec6}, illustrates and clarifies this
remark. There, $\mf r=1$, and the first two terms of the expansion
$\ms J^{(0)}$ and $\ms J^{(1)}$ are obtained by a direct computation.
\end{remark}

\smallskip

For some models, as condensing zero-range processes \cite{bl3, l2014,
s2018}, in order to compute the limit of the mean jump rates \eqref{20}, one needs to take
microscopic wells, and condition (H4b) fails. Indeed, if in
Propositions 4.1 and 5.1 of \cite{bl3} one replaces microscopic
neighbourhoods by small macroscopic ones, say balls of radius
$\epsilon$, the capacities between sets, and in consequence the jump
rates, become $\epsilon$-dependent and condition (H4b) fails.  In
Theorem \ref{mt2}, we replace this condition by (H6).

\smallskip
\begin{itemize}
\item [\bf (H6a)] There exists $c_0>0$ such that for each
$1\le p \le \mf q$, $j\in S_p$,
$\pi_n(\ms V^{p,j}_n) / \pi_n(\ms V^{(p)}_n) \ge c_0$ for all $n\ge 1$.

\item [\bf (H6b)] Fix $1\le p \le \mf q$. Let $(\mu_n:n\ge 1)$ be
a sequence in $\ms P(\Xi_n)$ such that
$\mu_n$ converges weakly to
$\sum_{j\in S_1} \omega_j \, \delta_{x_j}$ for some
$\omega\in \ms P(S_1)$, and which fulfills
\begin{equation}
\label{02}
\liminf_{n\to\infty} \, \theta^{(p)}_n\, \ms I_n(\mu_n) \;<\;
\infty\;.
\end{equation}
Then, for all $j\in S_1$,
$\displaystyle \lim_{n\to\infty} \mu_n (\ms V^{1,j}_n) \;=\;
\omega_j$.
\end{itemize}

\begin{remark}
\label{rm5}
Condition (H6a) is only used in Lemma \ref{l14} to show that
$Z_n\to 1$. In concrete examples one can drop this condition and prove
directly that $Z_n$ converges to $1$. 
\end{remark}

\begin{remark}
\label{rm5b}
Here is a strategy to prove condition (H6b) assuming that the
sequences of Markov chains satisfy two conditions.  Let
$\pi^{j,\epsilon}_n$, $j\in S_1$, $\epsilon>0$, be the stationary
state $\pi_n$ conditioned to $B_\epsilon (x_j)$. Suppose that
$\pi^{j,\epsilon}_n (B_\epsilon (x_j) \setminus \ms V^{1,j}_n) \to 0$.
This is the first condition which has to be proved in specific
examples and which holds for condensing zero range processes.  Since
$\mu_n$ converges weakly to
$\sum_{j\in S_1} \omega_j \, \delta_{x_j}$,
$\mu_n(B_\epsilon (x_j)) \to \omega_j$. To complete the proof of
(H6b), it remains to show that
$\mu_n (B_\epsilon (x_j) \setminus \ms V^{1,j}_n) \to 0$.  One can
estimate the difference
$\mu_n (B_\epsilon (x_j) \setminus \ms V^{1,j}_n) - \pi^{j,\epsilon}_n
(B_\epsilon (x_j) \setminus \ms V^{1,j}_n)$ by a constant times
$\ms I_n(\mu_n)$ (using the so-called path lemmas in hydrodynamic
limit theory, cf. the proof of \eqref{05} which relies on the same
ideas). If the constant is not too large, and this is the second
condition alluded to at the beginning of the remark, it follows from
the previous bound and \eqref{02} that
$\mu_n (B_\epsilon (x_j) \setminus \ms V^{1,j}_n) - \pi^{j,\epsilon}_n
(B_\epsilon (x_j) \setminus \ms V^{1,j}_n) \to 0 $.
\end{remark}

\begin{theorem}
\label{mt2}
Assume that conditions {\rm (H0) -- (H3), (H4a), (H5), (H6)} are in
force. Then, the assertion of Theorem \ref{mt1} holds.
\end{theorem}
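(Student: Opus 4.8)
The plan is to obtain Theorem \ref{mt2} by rerunning the proof of Theorem \ref{mt1} and checking that condition (H4b) can be dispensed with, being needed only at two identifiable points, which are covered by (H6a) and (H6b) respectively. First I would recall the architecture of the argument behind Theorem \ref{mt1}. Establishing the full $\Gamma$-expansion \eqref{f05} requires, for every $1\le p\le \mf q$: (i) the $\Gamma$-liminf inequality $\liminf_{n} \theta^{(p)}_n\,\ms I_n(\mu_n) \ge \ms I^{(p)}(\mu)$ for every sequence $\mu_n\to\mu$; (ii) the existence, for every $\mu$, of a recovery sequence $\mu_n\to\mu$ with $\limsup_n \theta^{(p)}_n\,\ms I_n(\mu_n)\le \ms I^{(p)}(\mu)$; together with the structural items (c) and (d) of Definition \ref{def1}. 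The case $p=0$ of condition (b) is exactly (H0). I would point out that (c) and (d) are purely combinatorial: by Lemma \ref{l39} the $0$-level set of $\ms I^{(p-1)}$ is the convex hull of the measures $\pi^{(p)}_j$, $j\in S_p$, which is precisely the domain of finiteness of $\ms I^{(p)}$ by \eqref{o-83b}, while (d) follows from (H3) since then $S_{\mf q+1}=\{1\}$. None of this uses (H4b), and neither does Lemma \ref{l11} (weak convergence $\pi^{p,j}_n\to\pi^{(p)}_j$), whose proof rests on (H4a) for $p=1$ and on the recursion \eqref{09} together with (H1)--(H2) for $p\ge 2$.

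The two places where (H4b) genuinely enters are the following. In the $\Gamma$-liminf step, given $\mu_n\to\mu$ with $\sup_n \theta^{(p)}_n\,\ms I_n(\mu_n)<\infty$, one first uses (H0) and $\theta^{(p)}_n\to\infty$ to get $\ms I^{(0)}(\mu)=0$, hence $\mu=\sum_{j\in S_1}\omega_j\,\delta_{x_j}$ for some $\omega\in\ms P(S_1)$; then (H4b), through the inclusion $B_\epsilon(x_j)\cap\Xi_n\subset\ms V^{1,j}_n$, is what lets one identify the asymptotic mass $\lim_n\mu_n(\ms V^{1,j}_n)=\omega_j$ on each basal well. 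This identification is now taken directly as the hypothesis (H6b), after which the remainder of the $\Gamma$-liminf argument — aggregating the basal masses along the recurrent-class structure to express $\mu$ as a convex combination of the $\pi^{(p)}_m$ and bounding the liminf below by $\bb I^{(p)}$ of the corresponding weights via a test-function/Dirichlet-form estimate built on the capacity asymptotics of (H1) and on Proposition \ref{l03} (hence (H5)) — proceeds verbatim. The second place is Lemma \ref{l14}, where (H4b) is used to show that a normalization constant $Z_n$ converges to $1$; as recorded in Remark \ref{rm5}, (H6a) supplies exactly the uniform lower bound $\pi_n(\ms V^{p,j}_n)/\pi_n(\ms V^{(p)}_n)\ge c_0$ that is needed there. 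So the plan is to substitute (H6b) at the first point and (H6a) at the second, leaving the rest of the proof of Theorem \ref{mt1} untouched.

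Finally I would double-check the $\Gamma$-limsup step, to make sure (H4b) plays no role there. The recovery sequence for $\mu=\sum_{j\in S_p}\omega_j\,\pi^{(p)}_j$ can be built from the conditional stationary measures, $\mu_n:=\sum_{j}\omega_j\,\pi^{p,j}_n$ up to an $n$-dependent perturbation of the weights that encodes a near-optimal test function of $\bb I^{(p)}$ in \eqref{40}; this sequence converges weakly to $\mu$ by Lemma \ref{l11} with no reference to (H4b), and the matching upper bound on $\theta^{(p)}_n\,\ms I_n(\mu_n)$ follows from the capacity asymptotics in (H1) and the within-well equilibration supplied by (H5) through Proposition \ref{l03}. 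I expect the main obstacle to be precisely this bookkeeping: confirming that in the proof of Theorem \ref{mt1} condition (H4b) is never used implicitly — for instance to localize the mass of near-optimal measures away from $\Delta_n$ or to control $\Delta^{(p)}_n$ — and that whenever such a localization is invoked it can be re-derived from (H4a), the boundedness \eqref{02}, and (H6b) (this is also the content of Remark \ref{rm5b}). Once this is verified, assembling (i), (ii), (c) and (d) yields the full $\Gamma$-expansion and hence Theorem \ref{mt2}.
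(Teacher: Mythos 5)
Your proposal is correct and follows the paper's own route: Theorem \ref{mt2} is proved by rerunning the proof of Theorem \ref{mt1}, invoking Lemma \ref{l13} (the (H6b)-version of Lemma \ref{l06}, which itself only needed (H4b) through Lemma \ref{l15}) for the \emph{$\Gamma$-liminf}, and Lemma \ref{l14} (whose normalization $Z_n\to 1$ rests on (H6a)) for the \emph{$\Gamma$-limsup}. Two small corrections of bookkeeping: in Theorem \ref{mt1} it is Lemma \ref{l07} --- not Lemma \ref{l14} --- that uses (H4b) (via the ball/compactness argument of Remark \ref{rm3} showing $\nu_n(\ms V^{(p)}_n)\to 1$), Lemma \ref{l14} being precisely its (H6a)-replacement; and the matching upper bound in the limsup comes from Proposition \ref{l16} for the trace functional (together with the harmonic extension and normalization of the density), not from Proposition \ref{l03}, which is a liminf tool.
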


\begin{remark}
\label{rm2}
Since, by hypothesis (H5), $\bb X^{(q)}_t$ has a unique closed
irreducible class, the $0$-level set of $\ms I^{(\mf q)}$ is a
singleton whose element is the measure
$\sum_{j\in S_{\mf q}} M^{(\mf q)}_1 (j) \, \pi^{(\mf q)}_j$. This
means that  we arrived at the end of the expansion of $\ms I_n$
with the functional $\ms I^{(\mf q)}$.
\end{remark}

Next result is a simple consequence of the level two large deviations
principle \eqref{50} and the $\Gamma$-convergence stated in the
previous theorems and in condition (H0) for $p=0$.  (cf. Corollary 4.3
in \cite{mar}). Recall from \eqref{48} the definition of $L_t^{(n)}$.

\begin{corollary}
\label{cor2}
Suppose that the hypotheses of Theorem \ref{mt1} or \ref{mt2} hold.
Fix $0\le p\le \mf q$ and set $\theta^{(0)}_n=1$.  For every
closed subset $F$ and open subset $G$ of $\ms P(\Xi)$,
\begin{align*}
\begin{aligned}
\limsup_{n\to \infty} \, \limsup_{t\to \infty}
\, \frac{\theta^{(p)}_n}{t} \,\max_{x\in \Xi_n} \, \log\, 
\mb P^{(n)}_{\! x} \Big[\, L_t^{(n)}
\,\in\, F \, \Big] \;&\le\; -\, \inf_{\mu\in F} \ms I^{(p)} (\mu) \;,
\\
\liminf_{n\to \infty} \, \liminf_{t\to \infty}
\, \frac{\theta^{(p)}_n}{t}  \, \min_{x\in \Xi_n} \,\log\, 
\mb P^{(n)}_{\! x} \Big[\, L_t^{(n)}
\,\in\, G \, \Big] \;&\ge\; -\, \inf_{\mu\in G} \ms I^{(p)} (\mu) \;.
\end{aligned}
\end{align*}
\end{corollary}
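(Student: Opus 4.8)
The plan is to combine the Donsker--Varadhan large deviations principle \eqref{50} with the $\Gamma$-convergence statements already established, using the general transfer principle of Mariani \cite[Corollary 4.3]{mar} that turns a large deviations principle at speed $t$ together with $\Gamma$-convergence of the rescaled rate functionals into a large deviations statement along the double limit $t\to\infty$ followed by $n\to\infty$ (or, more precisely, along the iterated $\limsup$/$\liminf$). Fix $0\le p\le \mf q$ and set $\theta^{(0)}_n=1$. By condition (H0) when $p=0$, and by Theorems \ref{mt1}--\ref{mt2} when $1\le p\le\mf q$, the sequence $\theta^{(p)}_n\,\ms I_n$ $\Gamma$-converges on $\ms P(\Xi)$ to $\ms I^{(p)}$; recall that $\ms P(\Xi)$ is a compact metric (hence Polish) space, so all the topological hypotheses of the $\Gamma$-convergence framework are met.

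First I would treat the upper bound. Fix a closed set $F\subset\ms P(\Xi)$. For each fixed $n$, the upper bound in \eqref{50} applied to $A=F$ gives
\begin{equation*}
\limsup_{t\to\infty}\,\frac1t\,\max_{x\in\Xi_n}\log\mb P^n_{\!x}\big[L^{(n)}_t\in F\big]\;\le\;-\,\inf_{\mu\in F}\ms I_n(\mu)\;,
\end{equation*}
since $F$ is closed, and hence, multiplying by $\theta^{(p)}_n>0$,
\begin{equation*}
\limsup_{t\to\infty}\,\frac{\theta^{(p)}_n}{t}\,\max_{x\in\Xi_n}\log\mb P^n_{\!x}\big[L^{(n)}_t\in F\big]\;\le\;-\,\inf_{\mu\in F}\theta^{(p)}_n\,\ms I_n(\mu)\;.
\end{equation*}
Taking $\limsup_{n\to\infty}$ on both sides, it remains to show $\liminf_{n\to\infty}\inf_{\mu\in F}\theta^{(p)}_n\,\ms I_n(\mu)\ge\inf_{\mu\in F}\ms I^{(p)}(\mu)$. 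This is the standard fact that the $\Gamma$-liminf inequality, combined with compactness (equi-coercivity is automatic here since the space $\ms P(\Xi)$ is compact and the functionals are nonnegative and lower semicontinuous as $\Gamma$-limits), implies lower semicontinuous convergence of the infima over closed sets: pick $\mu_n\in F$ nearly achieving $\inf_F\theta^{(p)}_n\ms I_n$ along a subsequence realizing the $\liminf$, extract a further convergent subsequence $\mu_n\to\mu\in F$ by compactness, and apply the $\Gamma$-liminf bound $\liminf_n\theta^{(p)}_n\ms I_n(\mu_n)\ge\ms I^{(p)}(\mu)\ge\inf_F\ms I^{(p)}$.

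For the lower bound, fix an open set $G\subset\ms P(\Xi)$ and fix $\mu\in G$; it suffices to prove the bound with $\inf_{\mu\in G}\ms I^{(p)}(\mu)$ replaced by $\ms I^{(p)}(\mu)$ and then take the infimum over $\mu\in G$. If $\ms I^{(p)}(\mu)=+\infty$ there is nothing to prove, so assume it is finite. By the $\Gamma$-limsup condition there is a recovery sequence $\mu_n\to\mu$ with $\limsup_n\theta^{(p)}_n\ms I_n(\mu_n)\le\ms I^{(p)}(\mu)$; in particular $\mu_n\in\ms P(\Xi_n)$ for $n$ large and $\mu_n\in G$ for $n$ large since $G$ is open. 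For each such $n$, choose a small open neighbourhood $G_n\subset G$ of $\mu_n$; the lower bound in \eqref{50} with $A=G_n$ gives
\begin{equation*}
\liminf_{t\to\infty}\,\frac{\theta^{(p)}_n}{t}\,\min_{x\in\Xi_n}\log\mb P^n_{\!x}\big[L^{(n)}_t\in G\big]\;\ge\;-\,\theta^{(p)}_n\,\inf_{\nu\in G_n}\ms I_n(\nu)\;\ge\;-\,\theta^{(p)}_n\,\ms I_n(\mu_n)\;;
\end{equation*}
taking $\liminf_{n\to\infty}$ and using the recovery-sequence estimate yields $\liminf_n\liminf_t\frac{\theta^{(p)}_n}{t}\min_x\log\mb P^n_{\!x}[L^{(n)}_t\in G]\ge-\ms I^{(p)}(\mu)$, and optimizing over $\mu\in G$ finishes the proof.

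The argument is almost entirely soft; the only point requiring a little care is the interchange of the two limits, i.e.\ the passage from the fixed-$n$ bounds \eqref{50} to the iterated-limit statement, and the extraction argument for the upper bound which leans on compactness of $\ms P(\Xi)$ in the weak topology (valid since $\Xi$ is compact). I expect no genuine obstacle: this is precisely the content of \cite[Corollary 4.3]{mar}, and the proof above simply unwinds it in the present notation.
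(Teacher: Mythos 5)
Your argument is correct and is exactly what the paper has in mind: the paper offers no detailed proof, stating only that the corollary is a "simple consequence" of the Donsker--Varadhan principle \eqref{50} and the $\Gamma$-convergence of $\theta^{(p)}_n\,\ms I_n$, with a pointer to \cite[Corollary 4.3]{mar}. Your write-up simply unwinds that citation — the upper bound via the $\Gamma$-liminf plus compactness of $\ms P(\Xi)$ (convergence of infima over closed sets), the lower bound via recovery sequences — so no further comparison is needed.
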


\smallskip\noindent{\bf Outline of the proof.} The proof of the
\textit{$\Gamma$-liminf}, presented in Lemma \ref{l06}, follows the method
introduced in \cite{bgl-lm}. It relies on the variational formula
\eqref{f4} for the rate functional and on the fact that the generator
applied to the harmonic extension of a function coincides with the
generator of the trace process applied to the original function. The
main difficulty in this proof is handled in Proposition \ref{l03}, the
principal result of Section \ref{sec4}.

The proof of the \textit{$\Gamma$-limsup}, presented in Lemma \ref{l07}, is
more demanding. As in \cite{bgl-lm}, one first proves the
\textit{$\Gamma$-limsup} of the trace process. This is the content of the next
section and stated in Proposition \ref{l16}. It is in the proof of
this result that the metastable behavior of the process, described in
assumptions (H1), (H2), and the asymptotic behavior of the stationary
state, presented in condition (H4a), are crucial.

After this initial step, one turns to the proof of the
\textit{$\Gamma$-limsup}, which is carried out inductively. Condition (H0) and
the induction hypothesis permit to restrict the proof of the upper
bound to probability measures $\mu$ which are convex combinations of
Dirac masses. By the proof of the \textit{$\Gamma$-limsup} for the trace
process, there exists a sequence $\mu_n$, whose support is
concentrated on the set on which the trace is taken, which converges
to $\mu$ and satisfies the \textit{$\Gamma$-limsup}. All the difficulty
consists in extending the measure $\mu_n$ to a measure $\nu_n$ defined
on the whole space and whose energy is close to the trace-energy of
$\mu_n$. This step requires delicate estimates and is presented in
Lemma \ref{l14}.

\smallskip \noindent
\textbf{Organization of the paper.}
The rest of the paper is organized as follows. In Section \ref{sec3}
we prove the \emph{$\Gamma$-limsup} for the trace process on
$\ms V^{(p)}_n$.  In Section \ref{sec4}, we replace $\pi_n$ in
equation \eqref{20} by a measure $\mu_n$, and identify the limit as in
\eqref{01a}, provided that the sequence $(\mu_n:n\ge 1)$ fulfills
\eqref{02}. This is the main step in the proof of the
\emph{$\Gamma$-liminf}. The proofs of Theorems \ref{mt1} and \ref{mt2}
are presented in Section \ref{sec5}. In Section \ref{sec6}, to
illustrate the theory, we show that random walks on a potential field
satisfying a natural set of hypotheses fulfill the conditions of
Theorem \ref{mt1}.

\section{\emph{$\Gamma$-limsup} of the trace}
\label{sec3}

Fix $1\le p\le \mf q$, and denote by
$\ms I^{(p)}_n \colon \ms P(\ms V_n^{(p)}) \to [0,+\infty)$ the
occupation time large deviations rate functional of the trace process
$Y^{\ms V^{(p)}_n}_t$:
\begin{equation}
\label{eq:2}
{\color{blue}  \ms I^{(p)}_n (\mu)} \;:=\;
\sup_{u} \,-\,  \sum_{x\in \ms V_n^{(p)}} \mu(x)\,
\frac{1}{u(x)} \, [\, (\mf T_{\ms V_n^{(p)}} \ms L_n) \, u )\,]
(x)  \;,
\end{equation}
where the supremum is carried over all functions
$u\colon \ms V_n^{(p)}\to (0,\infty)$ and
$\color{blue} \mf T_{\ms V_n^{(p)}} \ms L_n$ is the generator of the
trace process $Y^{\ms V^{(p)}_n}_t$.  The main result of this section,
Proposition \ref{l16}, states that $\ms I^{(p)}$ is a 
\emph{$\Gamma$-limsup}
for the sequence $\theta^{(p)}_n\, \ms I^{(p)}_n$.

Denote by $\color{blue} R^p_n(\,\cdot\,,\,\cdot\,)$ the jump rates of
the Markov chain $Y^{\ms V^{(p)}_n}_t$. Equation \eqref{o-40} provides
an explicit formula for $R^p_n(x,y)$. Let
$\color{blue} R^p_n(x, \ms A) \,=\, \sum_{y\in\ms A} R^p_n(x,y)$,
$\ms A \subset \ms V^{(p)}_n$.  We start with estimates on the
measures of the wells $\ms V^{p,j}_n$, $j\in S_p$.

\begin{lemma}
\label{l09}
Assume that condition {\rm (H1)} is in force.  Fix $1\le p\le \mf
q$. If $r^{(p)}(j,k)>0$ for some $k\not = j\in S_p$, then
$\pi_n(\ms V^{p,j}_n) / \pi_n(\ms V^{p,k}_n)$ converges to
$r^{(p)}(k,j)/r^{(p)}(j,k) \in [0,+\infty)$.
\end{lemma}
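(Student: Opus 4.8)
The plan is to exploit the reversibility-based representation of the mean jump rates mentioned in Remark~\ref{r-3}, together with the detailed-balance identity satisfied by the stationary measure $\pi_n$ of $X^{(n)}_t$ (equivalently, of the trace process $Y^{\ms V^{(p)}_n}_t$, which is itself reversible with respect to $\pi_n$ conditioned to $\ms V^{(p)}_n$). First I would write down the detailed-balance relation for the reduced chain on the finite set $S_p$: by reversibility of the trace process and the definition \eqref{20} of $r^{\mf V_p}_n(i,j)$, one has
\begin{equation*}
\pi_n(\ms V^{p,i}_n)\, r^{\mf V_p}_n(i,j) \;=\; \pi_n(\ms V^{p,j}_n)\, r^{\mf V_p}_n(j,i)\;,
\qquad i\neq j\in S_p\;.
\end{equation*}
Indeed each side equals $\sum_{x\in\ms V^{p,i}_n}\sum_{y\in\ms V^{p,j}_n}\pi_n(x) R^{\mf V_p}_n(x,y)$ (resp.\ with $i,j$ swapped), and these coincide because $\pi_n(x) R^{\mf V_p}_n(x,y)=\pi_n(y) R^{\mf V_p}_n(y,x)$ for the reversible trace dynamics.

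Next, I would multiply this identity through by $\theta^{(p)}_n$ and take $n\to\infty$. By hypothesis (H1), $\theta^{(p)}_n r^{\mf V_p}_n(i,j)\to r^{(p)}(i,j)$ and $\theta^{(p)}_n r^{\mf V_p}_n(j,i)\to r^{(p)}(j,i)$, both finite. Fix the pair $(j,k)$ with $r^{(p)}(j,k)>0$. Rearranging the detailed-balance relation for this pair gives
\begin{equation*}
\frac{\pi_n(\ms V^{p,j}_n)}{\pi_n(\ms V^{p,k}_n)}
\;=\; \frac{\theta^{(p)}_n\, r^{\mf V_p}_n(k,j)}{\theta^{(p)}_n\, r^{\mf V_p}_n(j,k)}\;.
\end{equation*}
The denominator on the right converges to $r^{(p)}(j,k)>0$ and the numerator to $r^{(p)}(k,j)\in[0,\infty)$, so the ratio converges to $r^{(p)}(k,j)/r^{(p)}(j,k)\in[0,+\infty)$, which is exactly the claimed statement.

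The only point requiring a small amount of care — and what I would flag as the main (minor) obstacle — is justifying that the trace process $Y^{\ms V^{(p)}_n}_t$ is reversible with respect to the conditioned stationary measure, so that the detailed-balance identity above is legitimate. This follows from the general fact that the trace of a reversible Markov chain on a subset is again reversible with respect to the restriction of the stationary measure (see \cite{bl2}); alternatively one can invoke Remark~\ref{r-3} directly, writing $\pi_n(\ms V^{(p)}_n)\, r^{\mf V_p}_n(i,j)$ and $\pi_n(\ms V^{(p)}_n)\, r^{\mf V_p}_n(j,i)$ both as the capacity-type expression $\mathrm{cap}_n(\ms V^{p,i}_n,\ms V^{p,j}_n)$ up to symmetric corrections, which is manifestly symmetric in $i$ and $j$ for reversible dynamics. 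Since the chain is finite-state once traced onto $\ms V_n$, there are no integrability issues, and one does not even need $\pi_n(\ms V^{(p)}_n)\to 1$; the argument is purely algebraic plus the passage to the limit afforded by (H1).
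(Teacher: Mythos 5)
Your proposal is correct and follows essentially the same route as the paper: both arguments rest on the detailed-balance identity for the reversible trace process, which when combined with the definition \eqref{20} gives $\pi_n(\ms V^{p,j}_n)\, r^{\mf V_p}_n(j,k) = \pi_n(\ms V^{p,k}_n)\, r^{\mf V_p}_n(k,j)$, after which one multiplies by $\theta^{(p)}_n$ and invokes (H1) together with $r^{(p)}(j,k)>0$ to pass to the limit. The only cosmetic difference is that you divide by $r^{\mf V_p}_n(j,k)$ explicitly (harmless, since it is nonzero for large $n$ by (H1)), whereas the paper divides by $\pi_n(\ms V^{p,k}_n)$ and concludes directly.
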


\begin{proof}
By reversibility,
\begin{equation}
\label{08b}
\sum_{x\in \ms V^{p,j}_n} \pi_n (x) \,
R^p_n(x, \ms V^{p,k}_n) \;=\;
\sum_{x\in \ms V^{p,k}_n}
\pi_n (x)  \, R^p_n(x, \ms V^{p,j}_n)\;.
\end{equation}
Divide and multiply the left-hand side by $\pi_n(\ms V^{p,j}_n)$ and
divide both sides by $\pi_n(\ms V^{p,k}_n)$.  By condition (H1), and
since $r^{(p)}(j,k)>0$, the sequence
$\pi_n(\ms V^{p,j}_n)/\pi_n(\ms V^{p,k}_n)$ converges to
$r^{(p)}(k,j)/r^{(p)}(j,k)\in [0,+\infty)$, as claimed.
\end{proof}

Recall from equation \eqref{09} the definition of the measures
$M^{(p)}_m \in \ms P(\mf R^{(p)}_m)$, $m\in S_{p+1}$.

\begin{corollary}
\label{l10}
Assume that conditions {\rm (H1), (H2)} are in force.  Fix
$1\le p\le \mf q$ and $m\in S_{p+1}$.  Then, for all $j\in S_p$ such
that $\ms V^{p,j}_n \subset \ms V^{p+1,m}_n$, the sequence
$\pi_n(\ms V^{p,j}_n) / \pi_n(\ms V^{p+1,m}_n)$ converges to
$M^{(p)}_m(j)$.
\end{corollary}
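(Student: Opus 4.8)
The plan is to identify the finite vector $\nu_n := \big( \pi_n(\ms V^{p,j}_n)/\pi_n(\ms V^{p+1,m}_n)\big)_{j\in \mf R^{(p)}_m}$ with a probability measure on $\mf R^{(p)}_m$ and to show it converges to the stationary state $M^{(p)}_m$ of the restricted chain. First I would use condition (H2) and \eqref{59} to note that $\ms V^{p+1,m}_n$ is the \emph{disjoint} union of the wells $\ms V^{p,j}_n$ with $j\in \mf R^{(p)}_m$ (since $\mf V_p$ is a partition of $\Xi_n$); in particular $j\in S_p$ satisfies $\ms V^{p,j}_n\subset \ms V^{p+1,m}_n$ exactly when $j\in \mf R^{(p)}_m$, and $\sum_{j\in \mf R^{(p)}_m}\nu_n(j)=1$, so $\nu_n\in\ms P(\mf R^{(p)}_m)$ for every $n$.

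Next I would record the detailed balance identity already used in the proof of Lemma \ref{l09}: arguing as there, by reversibility one has $\pi_n(\ms V^{p,i}_n)\, r^{\mf V_p}_n(i,j) = \pi_n(\ms V^{p,j}_n)\, r^{\mf V_p}_n(j,i)$ for all $i,j$. Dividing by $\pi_n(\ms V^{p+1,m}_n)$ and multiplying by $\theta^{(p)}_n$ gives $\nu_n(i)\, \theta^{(p)}_n r^{\mf V_p}_n(i,j) = \nu_n(j)\, \theta^{(p)}_n r^{\mf V_p}_n(j,i)$ for all $i,j\in \mf R^{(p)}_m$.

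Then I would run a compactness argument. Since $\ms P(\mf R^{(p)}_m)$ is compact, it suffices to check that every subsequential limit $\nu$ of $(\nu_n)$ equals $M^{(p)}_m$. Passing to the limit along such a subsequence in the identity above and using \eqref{01a} of condition (H1), one obtains $\nu(i)\, r^{(p)}(i,j) = \nu(j)\, r^{(p)}(j,i)$ for all $i,j\in \mf R^{(p)}_m$. Summing over $i\in \mf R^{(p)}_m\setminus\{j\}$ shows that $\nu$ solves the stationarity equations of the Markov chain $\bb X^{(p)}_t$ restricted to the closed irreducible class $\mf R^{(p)}_m$. As a finite irreducible Markov chain has a unique stationary probability measure, $\nu = M^{(p)}_m$; since every subsequential limit coincides, $\nu_n\to M^{(p)}_m$, which is the assertion.

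I do not expect a real obstacle: this is essentially the reasoning behind Lemma \ref{l09}, now carried out inside a whole irreducible class rather than across a single pair of wells. The two points that need a little care are (i) the use of (H2) to guarantee that $\nu_n$ is a probability measure on $\mf R^{(p)}_m$, so that no mass leaks to wells outside $\ms V^{p+1,m}_n$; and (ii) the observation that, because $\mf R^{(p)}_m$ is \emph{closed}, summing the limiting detailed balance relations over $i\in \mf R^{(p)}_m$ reproduces exactly the full stationarity equations of the restricted chain, so that uniqueness of the stationary distribution applies directly and positivity of $\nu$ need not be argued separately.
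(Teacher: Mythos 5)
Your proof is correct, and it reaches the same core identification step as the paper --- namely, dividing the detailed-balance relation \eqref{08b} of the trace process by $\pi_n(\ms V^{p+1,m}_n)$, multiplying by $\theta^{(p)}_n$, and invoking \eqref{01a} to conclude that the limiting weights are reversible for $r^{(p)}$ restricted to $\mf R^{(p)}_m$ --- but you get to that step by a different route. The paper first establishes that the ratios $\pi_n(\ms V^{p,j}_n)/\pi_n(\ms V^{p+1,m}_n)$ actually converge: it appeals to Lemma \ref{l09} to obtain convergence of the pairwise ratios $\pi_n(\ms V^{p,j}_n)/\pi_n(\ms V^{p,k}_n)$ for all $j,k$ in the closed irreducible class (which implicitly requires chaining Lemma \ref{l09} along paths of positive rates, since that lemma only applies directly to pairs with $r^{(p)}(j,k)>0$, and requires arguing that the limits are strictly positive within a class), and only then identifies the limit via \eqref{08}. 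You instead bypass Lemma \ref{l09} entirely: you observe that $\nu_n$ lives in the compact set $\ms P(\mf R^{(p)}_m)$, show that every subsequential limit satisfies the limiting detailed-balance relations, and conclude by uniqueness of the stationary distribution of the irreducible restricted chain. This buys you two things: you do not need the path-chaining extension of Lemma \ref{l09}, and positivity of the limit comes for free from the strict positivity of the stationary measure of a finite irreducible chain rather than having to be tracked through the ratios. The trade-off is that the paper's route also delivers the pairwise convergence statement of Lemma \ref{l09} as a usable intermediate fact (it is reused elsewhere, e.g.\ in Lemma \ref{l17} and Corollary \ref{l10b}), whereas your argument produces only the normalized limit. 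Both proofs are sound.
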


\begin{proof}
Fix $1\le p\le \mf q$, $m\in S_{p+1}$ and $j\in S_p$ such that
$\ms V^{p,j}_n \subset \ms V^{p+1,m}_n$.  By definition of
$\ms V^{p+1,m}_n$, $j\in \mf R^{(p)}_{m}$. Since $\mf R^{(p)}_{m}$ is
a closed irreducible class of $\bb X^{(p)}_t$, by Lemma \ref{l09},
for all $k, j \in \mf R^{(p)}_{m}$,
$\pi_n(\ms V^{p,j}_n) / \pi_n(\ms V^{p,k}_n)$ converges to a real
number in $(0,\infty)$. In particular, for $j \in \mf R^{(p)}_{m}$,
$\pi_n(\ms V^{p,j}_n) / \pi_n(\ms V^{p+1,m}_n)$ converges to a real
number denoted by $\bs m(j) \in (0,1)$. By \eqref{08b},
\begin{equation}
\label{08}
\pi_n(\ms V^{p,j}_n)\, 
\sum_{x\in \ms V^{p,j}_n} \frac{\pi_n (x)}{\pi_n(\ms V^{p,j}_n)} \,
R^p_n(x, \ms V^{p,k}_n) \;=\; \pi_n(\ms V^{p,k}_n)\,
\sum_{x\in \ms V^{p,k}_n} \frac{\pi_n (x)}{\pi_n(\ms V^{p,k}_n)} \,
R^p_n(x, \ms V^{p,j}_n)\;.
\end{equation}
Dividing both sides by $\pi_n(\ms V^{p+1,m}_n)$ and taking the limit,
condition (H1) yields that $\bs m(\, \cdot\,)$ satisfies the detailed
balance conditions for the rates $r^{(p)}$ restricted to
$\mf R^{(p)}_{m}$, so that $\bs m(j) = M^{(p)}_m(j)$, as claimed.
\end{proof}

Recall from \eqref{16} the definition of the probability measure
$m_{p,j} \in \ms P(S_{p,j})$.

\begin{corollary}
\label{l10b}
Assume that conditions {\rm (H1), (H2)} are in force. 
Fix $1\le p\le \mf q$ and $j\in S_{p}$.  Then, for all $i\in S_{p,j}$
the sequence $\pi_n(\ms V^{1,i}_n) / \pi_n(\ms V^{p,j}_n)$ converges
to $m_{p,j} (i) \in (0,1)$.
\end{corollary}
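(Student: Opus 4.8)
The plan is to telescope the ratio $\pi_n(\ms V^{1,i}_n)/\pi_n(\ms V^{p,j}_n)$ along the tower of nested wells joining the basal set $\ms V^{1,i}_n$ to $\ms V^{p,j}_n$, evaluating each rung of the tower by Corollary \ref{l10}. Fix $1\le p\le \mf q$, $j\in S_p$ and $i\in S_{p,j}$. For $1\le q\le p$ let $\mf a_{q,i}\in S_q$ denote the unique index with $\ms V^{1,i}_n\subset \ms V^{q,\mf a_{q,i}}_n$ (unique because the sets $\ms V^{q,k}_n$, $k\in S_q$, are pairwise disjoint), extending the notation introduced before \eqref{16} with the conventions $\mf a_{1,i}=i$ and $\mf a_{p,i}=j$. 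The case $p=1$ is immediate, since then $i=j$ and the ratio equals $1$, so we may assume $p\ge 2$.

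The first point is that the wells are genuinely nested: for $1\le q< p$, the level-$q$ well $\ms V^{q,\mf a_{q,i}}_n$ containing $\ms V^{1,i}_n$ is, by \eqref{59}, contained in exactly one level-$(q+1)$ well, which then also contains $\ms V^{1,i}_n$ and is therefore $\ms V^{q+1,\mf a_{q+1,i}}_n$; moreover $\mf a_{q,i}\in \mf R^{(q)}_{\mf a_{q+1,i}}$. Consequently,
\begin{equation*}
\frac{\pi_n(\ms V^{1,i}_n)}{\pi_n(\ms V^{p,j}_n)}
\;=\; \prod_{q=1}^{p-1} \frac{\pi_n(\ms V^{q,\mf a_{q,i}}_n)}{\pi_n(\ms V^{q+1,\mf a_{q+1,i}}_n)}\;,
\end{equation*}
and each factor on the right-hand side is of the form treated in Corollary \ref{l10}, with the roles of $p$, $m$, $j$ played there by $q$, $\mf a_{q+1,i}$, $\mf a_{q,i}$. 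Hence each factor converges, as $n\to\infty$, to $M^{(q)}_{\mf a_{q+1,i}}(\mf a_{q,i})$, and, the product being finite, the left-hand side converges to $\prod_{q=1}^{p-1} M^{(q)}_{\mf a_{q+1,i}}(\mf a_{q,i})$, which is precisely $m_{p,j}(i)$ by the definition \eqref{16}. To conclude, each factor $M^{(q)}_{\mf a_{q+1,i}}(\mf a_{q,i})$ is the value at a point of the closed irreducible class $\mf R^{(q)}_{\mf a_{q+1,i}}$ of the stationary measure of a finite irreducible Markov chain, hence strictly positive, so $m_{p,j}(i)>0$; and $m_{p,j}(i)\le 1$ since $m_{p,j}\in \ms P(S_{p,j})$, strictly less than $1$ unless $S_{p,j}=\{i\}$, a case in which $\ms V^{1,i}_n=\ms V^{p,j}_n$ and the statement is trivial.

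I do not anticipate any real obstacle here: the probabilistic input is entirely carried by Corollary \ref{l10} (which itself rests on reversibility, the flux identity \eqref{08b}, and hypothesis (H1)), and what remains is bookkeeping. The only step demanding a little care is the identification of the nested indices $\mf a_{q,i}$ and the verification that the telescoped product of limits matches the product defining $m_{p,j}(i)$ in \eqref{16} factor by factor.
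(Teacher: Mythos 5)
Your proof is correct and coincides with the paper's own argument: the ratio is telescoped along the nested wells $\ms V^{1,i}_n\subset \ms V^{2,\mf a_{2,i}}_n\subset\cdots\subset \ms V^{p,j}_n$ and each factor is handled by Corollary \ref{l10}, the product of limits being $m_{p,j}(i)$ by \eqref{16}. Your additional remarks on the positivity of the limit are a welcome elaboration of a point the paper leaves implicit.
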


\begin{proof}
Rewrite the ratio $\pi_n(\ms V^{1,i}_n) / \pi_n(\ms V^{p,j}_n)$ as
\begin{equation*}
\frac{\pi_n(\ms V^{1,i}_n)}{\pi_n(\ms V^{2,\mf a_{2,i}}_n)} \,
\frac{\pi_n(\ms V^{2,\mf a_{2,i}}_n)} {\pi_n(\ms V^{3,\mf a_{3,i}}_n)}
\;\cdots\;
\frac{\pi_n(\ms V^{p-1,\mf a_{p-1,i}}_n)} {\pi_n(\ms V^{p,j}_n)}\;\cdot
\end{equation*}
By Corollary \ref{l10}, this expression converges to $m_{p,j} (i)$, as
claimed. 
\end{proof}

Recall from \eqref{59} the definition of the sets $\ms V^{p+1,m}_n$,
$m\in S_{p+1}$. Since we want
$\mf V_{p+1} = \{\ms V^{p+1,1}_n, \dots, \ms V^{p+1, \mf n_{p+1}}_n,
\Delta^{(p+1)}_n\}$ to form a partition of $\Xi_n$, we define
$\color{blue} \Delta^{(p+1)}_n$ as
$\color{blue} \Delta^{(p+1)}_n := \Delta^{(p)}_n \cup_{j\in \mf T_p}
\ms V^{p,j}_n$.

\begin{lemma}
\label{l17}
Assume that conditions {\rm (H1), (H2), (H4a)} are in force.  Then,
for all $1\le p \le \mf q$, $\pi_n(\ms V^{(p)}_n) \to 1$.
\end{lemma}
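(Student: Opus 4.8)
The plan is to prove this by downward induction on $p$, starting from $p = \mf q$ and moving to $p = 1$. The case $p = 1$ is exactly the content of condition (H4a), which asserts $\pi_n(\ms V^{(1)}_n) \to 1$, so that serves as the base of the induction in the sense that it is the statement we want to reduce all others to. More precisely, I would show that if $\pi_n(\ms V^{(p)}_n) \to 1$ then $\pi_n(\ms V^{(p+1)}_n) \to 1$; combined with (H4a) this gives the claim for all $1 \le p \le \mf q$. So the real work is a single inductive step relating consecutive levels.

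For the inductive step, recall from \eqref{59} and the definition of $\Delta^{(p+1)}_n$ that the partition $\mf V_{p+1}$ is obtained from $\mf V_p$ by merging the sets $\ms V^{p,j}_n$ with $j$ in the same closed irreducible class $\mf R^{(p)}_m$, and by absorbing the sets $\ms V^{p,j}_n$ with $j \in \mf T_p$ (the transient states of $\bb X^{(p)}_t$) into $\Delta^{(p+1)}_n$. Consequently,
\begin{equation*}
\ms V^{(p)}_n \;=\; \ms V^{(p+1)}_n \;\cup\; \bigcup_{j\in \mf T_p} \ms V^{p,j}_n\;,
\end{equation*}
a disjoint union, so that $\pi_n(\ms V^{(p)}_n) = \pi_n(\ms V^{(p+1)}_n) + \sum_{j\in\mf T_p}\pi_n(\ms V^{p,j}_n)$. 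Since by the induction hypothesis $\pi_n(\ms V^{(p)}_n)\to 1$, it suffices to show that $\pi_n(\ms V^{p,j}_n)\to 0$ for every transient state $j\in\mf T_p$. The natural way to extract this is via Lemma \ref{l09}: since $j$ is transient for $\bb X^{(p)}_t$, there is some path from $j$ leading to a recurrent class, hence there exists $k\neq j$ with $r^{(p)}(j,k)>0$; Lemma \ref{l09} then gives that $\pi_n(\ms V^{p,j}_n)/\pi_n(\ms V^{p,k}_n)$ converges to $r^{(p)}(k,j)/r^{(p)}(j,k)$, which is a \emph{finite} number. This alone does not force the ratio to vanish, so the argument needs one more observation.

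The point is that $j$ being transient means the chain $\bb X^{(p)}_t$ leaves $\{j\}$ and does not come back, so $r^{(p)}(k,j) = 0$ for \emph{every} $k$ in any closed irreducible class, and more generally one should get that the weight escapes entirely toward the recurrent classes. Concretely: iterate Lemma \ref{l09} along a path $j = j_0 \to j_1 \to \cdots \to j_\ell$ in the transition graph of $\bb X^{(p)}_t$ with $j_\ell$ in some recurrent class $\mf R^{(p)}_m$ and each $r^{(p)}(j_i, j_{i+1})>0$; at the last step where one passes from the transient part into $\mf R^{(p)}_m$ the reverse rate vanishes, yielding $\pi_n(\ms V^{p,j_{\ell-1}}_n)/\pi_n(\ms V^{p,j_\ell}_n)\to 0$, and then multiplying the ratios along the path gives $\pi_n(\ms V^{p,j}_n)/\pi_n(\ms V^{p,j_\ell}_n)\to 0$, hence $\pi_n(\ms V^{p,j}_n)\to 0$ since $\pi_n(\ms V^{p,j_\ell}_n)\le 1$. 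This is the step I expect to require the most care: one must check that irreducibility of $\bb X^{(p)}_t$ restricted to $S_p$ (guaranteed by (H1), which asserts $r^{(p)}(i,j)>0$ for some pair, together with the structure of closed classes and transient states) really does provide such a path from any transient $j$ into a recurrent class, and that the reverse rate across the "last edge" into the recurrent class is genuinely zero — this uses that $\mf R^{(p)}_m$ is \emph{closed}, so no $r^{(p)}$-transition leaves it. With that in hand, summing over the finitely many $j\in\mf T_p$ completes the inductive step, and the induction is finished.
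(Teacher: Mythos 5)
Your proposal is correct and follows essentially the same route as the paper: induction on $p$ with base case (H4a), the observation that $\ms V^{(p)}_n\setminus\ms V^{(p+1)}_n$ is the union of the transient wells, and an application of Lemma \ref{l09} along a path from a transient state into a closed class, where the reverse rate across the last edge vanishes because the class is closed. (Note only that despite your opening phrase "downward induction", what you actually carry out — and what the paper does — is upward induction from $p=1$.)
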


\begin{proof}
The proof is by induction on $p$. For $p=1$, it is hypothesis (H4a).
Suppose that it holds for all $1\le p'\le p$ so that
$\pi_n(\Delta^{(p)}_n)\to 0$. It remains to show that $\pi_n(\ms
V^{p,j}_n)\to 0$ for all $j\in \mf T_p$.

Fix $j\in \mf T_p$. There exist
$j=j_0, j_1, \dots, j_r = k \in \mf R^{(p)}_i$ for some $i\in S_{p+1}$
such that $r^{(p)}(j_a, j_{a+1})>0$ for $0\le a<r$. By Lemma
\ref{l09}, as $j\in \mf T_p$ and $k \in \mf R^{(p)}_i$, $\pi_n(\ms
V^{p,j}_n)/ \pi_n(\ms V^{p,k}_n) \to 0$, so that $\pi_n(\ms
V^{p,j}_n) \to 0$, as claimed.
\end{proof}

Fix $1\le p\le \mf q$. Denote by $\color{blue} \mf Q^{(p)}_a$,
$1\le a \le \ell_p$, the equivalent classes of the Markov chain
$\bb X^{(p)}_t$, by $\color{blue} \mf D^{(p)}_a$, $1\le a \le m_{p}$,
the ones which are not singletons, and by $\color{blue} S^{\rm sgl}_p$
the set of states $j\in S_p$ such that $\{j\}$ is an equivalent
class. Clearly,
\begin{equation}
\label{27}
S_p \;=\; \bigcup_{a=1}^{\ell_p} \mf
Q^{(p)}_a \;=\; S^{\rm sgl}_p \;\cup\; \bigcup_{a=1}^{m_p} \mf
D^{(p)}_a\;.
\end{equation}
Next result extends Corollary \ref{l10} to equivalent classes which
are not closed. Let
$\color{blue} \ms W^{p,a}_n = \cup_{j\in \mf D^{(p)}_a} \ms
V^{p,j}_n$, $1\le a\le m_p$, and let $\color{blue} m^{p,a}_j$ be the
unique stationary state (actually reversible) of the chain
$\bb X^{(p)}_t$ reflected at $\mf D^{(p)}_a$.

\begin{lemma}
\label{l08}
Assume that conditions {\rm (H1), (H2)} are in force.  For all
$1\le a\le m_p$, $j\in \mf D^{(p)}_a$,
$\pi_n(\ms V^{p,j}_n)/\pi_n(\ms W^{p,a}_n) \to m^{p,a}_j \in (0,1)$.
\end{lemma}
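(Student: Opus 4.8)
The plan is to follow the proof of Corollary \ref{l10}, the only difference being that the irreducible class $\mf D^{(p)}_a$ need not be closed. The first step is to show that, for $i,j\in\mf D^{(p)}_a$, the ratio $\pi_n(\ms V^{p,i}_n)/\pi_n(\ms V^{p,j}_n)$ converges to a limit $\gamma_{ij}\in(0,\infty)$. Since $\mf D^{(p)}_a$ is an equivalence class of $\bb X^{(p)}_t$ with more than one element, there is a path $i=k_0,k_1,\dots,k_s=j$ with all $k_b\in\mf D^{(p)}_a$ and $r^{(p)}(k_b,k_{b+1})>0$ for $0\le b<s$; by Lemma \ref{l09} each factor $\pi_n(\ms V^{p,k_b}_n)/\pi_n(\ms V^{p,k_{b+1}}_n)$ converges to a finite limit, hence so does the product $\pi_n(\ms V^{p,i}_n)/\pi_n(\ms V^{p,j}_n)$, say to $\gamma_{ij}\in[0,\infty)$. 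Applying the same to a path from $j$ to $i$ inside $\mf D^{(p)}_a$ gives $\pi_n(\ms V^{p,j}_n)/\pi_n(\ms V^{p,i}_n)\to\gamma_{ji}\in[0,\infty)$; as the two sequences are mutually reciprocal, $\gamma_{ij}\,\gamma_{ji}=1$, and therefore $\gamma_{ij},\gamma_{ji}\in(0,\infty)$. This positivity is the only genuinely new point with respect to Corollary \ref{l10} --- Lemma \ref{l09} alone yields limits only in $[0,\infty)$ --- and it is where the irreducibility, rather than the closedness, of $\mf D^{(p)}_a$ enters.

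The second step fixes a reference index $j_0\in\mf D^{(p)}_a$ and writes, using that $\ms W^{p,a}_n=\bigcup_{k\in\mf D^{(p)}_a}\ms V^{p,k}_n$ is a disjoint union, the ratio $\pi_n(\ms V^{p,j}_n)/\pi_n(\ms W^{p,a}_n)$ as $\big(\pi_n(\ms V^{p,j}_n)/\pi_n(\ms V^{p,j_0}_n)\big)\big/\sum_{k\in\mf D^{(p)}_a}\big(\pi_n(\ms V^{p,k}_n)/\pi_n(\ms V^{p,j_0}_n)\big)$. By the first step this converges to $m_j:=\gamma_{jj_0}\big/\sum_{k\in\mf D^{(p)}_a}\gamma_{kj_0}$, which lies in $(0,1)$ since $\mf D^{(p)}_a$ has at least two elements. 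It then remains to identify $(m_j)_{j\in\mf D^{(p)}_a}$ with the measure $m^{p,a}$. For this I would use reversibility of the trace process in the form $\pi_n(\ms V^{p,i}_n)\,r^{\mf V_p}_n(i,j)=\pi_n(\ms V^{p,j}_n)\,r^{\mf V_p}_n(j,i)$ (this is \eqref{08}, a consequence of \eqref{08b}): multiplying by $\theta^{(p)}_n$, dividing by $\pi_n(\ms W^{p,a}_n)$, and letting $n\to\infty$ with the help of (H1), one obtains $m_i\,r^{(p)}(i,j)=m_j\,r^{(p)}(j,i)$ for all $i,j\in\mf D^{(p)}_a$. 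Thus $(m_j)$ is a reversible, hence stationary, probability measure for the chain $\bb X^{(p)}_t$ reflected at $\mf D^{(p)}_a$; since this reflected chain is irreducible (a path between two states of an equivalence class stays within it), its stationary measure is unique, and therefore $m_j=m^{p,a}_j$, which completes the proof.

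I do not expect any serious obstacle: apart from the positivity remark in the first step, the argument is a transcription of that of Corollary \ref{l10}, with $\mf R^{(p)}_m$ replaced by $\mf D^{(p)}_a$, $\ms V^{p+1,m}_n$ by $\ms W^{p,a}_n$, $M^{(p)}_m$ by $m^{p,a}$, and the chain $\bb X^{(p)}_t$ reflected at $\mf D^{(p)}_a$ (in place of $\bb X^{(p)}_t$ restricted to the closed class $\mf R^{(p)}_m$) furnishing the limiting measure.
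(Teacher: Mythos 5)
Your proof is correct and follows exactly the route the paper intends: the paper omits the argument, stating only that it is ``similar to the one of Corollary \ref{l10} and left to the reader,'' and your write-up supplies precisely that adaptation, including the two points that genuinely need checking (positivity of the limiting ratios via two-way paths inside the communicating class $\mf D^{(p)}_a$, and identification of the limit with $m^{p,a}$ through the detailed-balance relation \eqref{08b} together with (H1) and the irreducibility of the reflected chain).
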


The proof of this result is similar to the one of Corollary \ref{l10}
and left to the reader.  Recall the definition of the measures
$\pi^{(p)}_j$, $j\in S_p$, introduced in \eqref{09}.

\begin{lemma}
\label{l11}
Assume that conditions {\rm (H1), (H2) and (H4a)}  are in force.
Let $\pi^{p,j}_n$, $j\in S_p$, $1\le p\le \mf q$ be the stationary
state $\pi_n$ conditioned to $\ms V^{p,j}_n$. Then, for each
$j\in S_p$, $\pi^{p,j}_n \to \pi^{(p)}_j$ weakly.
\end{lemma}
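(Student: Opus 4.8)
The plan is to prove the statement by induction on $p$, using the explicit decomposition \eqref{10} of $\pi^{(p)}_j$ into basal Dirac masses together with the ratio asymptotics established in Corollaries \ref{l10} and \ref{l10b}. For $p=1$ the assertion is precisely hypothesis (H4a): $\pi^{1,j}_n \to \delta_{x_j} = \pi^{(1)}_j$ weakly. So assume the result holds up to level $p-1$ (in fact, it suffices to have the base case, since the argument below is direct once the ratio limits are known); I will argue directly at level $p$.

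First I would write, for a fixed $j\in S_p$ and any bounded continuous $F\colon \Xi \to \bb R$,
\begin{equation*}
\int_\Xi F\, d\pi^{p,j}_n
\;=\; \frac{1}{\pi_n(\ms V^{p,j}_n)} \sum_{x\in \ms V^{p,j}_n} \pi_n(x)\, F(x)
\;=\; \sum_{i\in S_{p,j}} \frac{\pi_n(\ms V^{1,i}_n)}{\pi_n(\ms V^{p,j}_n)}
\int_\Xi F\, d\pi^{1,i}_n \;+\; \mathrm{Err}_n\;,
\end{equation*}
where the error term collects the contribution of the points in $\ms V^{p,j}_n$ not lying in any basal set $\ms V^{1,i}_n$ with $i\in S_{p,j}$. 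By \eqref{59}, $\ms V^{p,j}_n$ is a union of sets $\ms V^{1,i}_n$, $i\in \mf R^{(1)}_{\cdot}$-iterated, together with transient pieces coming from the lower-level partitions; Lemma \ref{l17} (more precisely, the argument showing $\pi_n(\Delta^{(p')}_n)\to 0$ and $\pi_n(\ms V^{p',k}_n)\to 0$ for transient $k$) shows that the total $\pi_n$-mass of this leftover is $o(\pi_n(\ms V^{(p)}_n))$; combined with hypothesis (H6a)-type lower bounds on $\pi_n(\ms V^{p,j}_n)/\pi_n(\ms V^{(p)}_n)$ — or, under the hypotheses of the present lemma, with the explicit positive limit $m_{p,j}(i)>0$ from Corollary \ref{l10b} applied to any fixed $i$ — one gets $\mathrm{Err}_n\to 0$. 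Then, using Corollary \ref{l10b} to pass $\pi_n(\ms V^{1,i}_n)/\pi_n(\ms V^{p,j}_n)\to m_{p,j}(i)$ and hypothesis (H4a) to pass $\int F\, d\pi^{1,i}_n \to F(x_i)$, the right-hand side converges to $\sum_{i\in S_{p,j}} m_{p,j}(i)\, F(x_i)$, which by \eqref{10} equals $\int_\Xi F\, d\pi^{(p)}_j$. Since $F$ was an arbitrary bounded continuous function, $\pi^{p,j}_n\to \pi^{(p)}_j$ weakly.

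The main obstacle is bookkeeping the leftover mass in $\ms V^{p,j}_n$: the set is built by iterating \eqref{59}, so at each level one discards the transient states of $\bb X^{(p')}_t$ and the residual $\Delta^{(p')}_n$, and one must check that all of these contribute negligibly relative to $\pi_n(\ms V^{p,j}_n)$. This is handled by Lemma \ref{l17} and its proof (transient wells have vanishing conditional mass by Lemma \ref{l09}), so no new idea is needed — only care in organizing the telescoping over levels $1,\dots,p$. Everything else reduces to the already-established ratio limits and the definition \eqref{10}.
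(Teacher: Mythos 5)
Your argument is correct and is essentially the paper's proof unrolled to the basal level: the paper inducts one level at a time, writing $\pi^{p+1,m}_n=\sum_{k\in\mf R^{(p)}_m}[\pi_n(\ms V^{p,k}_n)/\pi_n(\ms V^{p+1,m}_n)]\,\pi^{p,k}_n$ and invoking Corollary \ref{l10} plus the induction hypothesis, whereas you telescope directly down to the sets $\ms V^{1,i}_n$ using Corollary \ref{l10b} and the explicit formula \eqref{10}; since Corollary \ref{l10b} is itself obtained by telescoping Corollary \ref{l10}, the two routes are the same computation packaged differently, and your version has the small advantage of needing (H4a) only as a base case rather than a genuine induction. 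One point to correct: the ``main obstacle'' you identify does not exist. By \eqref{59} and the convention $\Delta^{(p+1)}_n:=\Delta^{(p)}_n\cup_{j\in\mf T_p}\ms V^{p,j}_n$, the transient wells and the residual sets are removed from the partition at each step, so each $\ms V^{p,j}_n$ is \emph{exactly} the disjoint union $\cup_{i\in S_{p,j}}\ms V^{1,i}_n$ and your term $\mathrm{Err}_n$ is identically zero; there is no leftover mass to control, and the appeal to Lemma \ref{l17} and to an (H6a)-type bound is unnecessary (and the latter is outside the lemma's hypotheses). Since you prove $\mathrm{Err}_n\to 0$ anyway, this is harmless, but recognizing that the decomposition is exact shortens the proof to the two displayed limits.
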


\begin{proof}
The proof is by induction on $p$. For $p=1$, it is hypothesis (H4a).
Assume it has been proven for $1\le r\le p$.
By construction,
$\ms V^{p+1,m}_n = \cup_{k\in \mf R^{(p)}_m} \ms V^{p,k}_n$.  By
definition of $\pi^{p+1,m}_n$,
\begin{equation*}
\pi^{p+1,m}_n (\,\cdot\,) \;=\;
\sum_{k\in \mf R^{(p)}_m}
\frac{\pi_n(\ms V^{p,k}_n)}{\pi_n(\ms V^{p+1,m}_n)} \;
\pi^{p,k}_n(\,\cdot\,) \;.
\end{equation*}
By Corollary \ref{l10}, $\pi_n(\ms V^{p,k}_n)/\pi_n(\ms V^{p+1,m}_n)$
converges to $M^{(p)}_m(k)$, and by the induction assumption,
$\pi^{p,k}_n$ converges weakly to $\pi^{(p)}_k$. Hence,
$\pi^{p+1,m}_n$ converges weakly to 
\begin{equation*}
\sum_{k\in \mf R^{(p)}_m}
M^{(p)}_m(k) \;
\pi^{(p)}_k(\,\cdot\,) \;,
\end{equation*}
which, by \eqref{09}, is equal to $\pi^{(p+1)}_m$, as claimed.
\end{proof}

Fix a measure $\mu = \sum_{j\in S_p} \omega_j \, \pi^{(p)}_j$ for some
$\omega\in \ms P(S_p)$ such that $\omega_j >0$ for all $j\in S_p$. Let
$\color{blue} \omega^{(a)}_j = \omega_j/ \Omega_a$,
$\color{blue} \Omega_a = \sum_{j\in \mf D^{(p)}_a} \omega_j$,
$1\le a \le m_p$.  Denote by $\color{blue} \bb L^{(p)}_a $ the
generator of the Markov chain $\bb X^{(p)}_t$ reflected at
$\mf D^{(p)}_a$.  By \eqref{o-83b}, Lemma A.7 and equation (A-14) in
\cite{l-mld},
\begin{equation}
\label{18}
\ms I^{(p)} (\mu) \, =\, \sum_{a=1}^{m_p}
\Omega_a\, I_{\bb L^{(p)}_a} (\omega^{(a)}) 
\;+\;
\sum_{a=1}^{\ell_p} \sum_{b\neq a}
\sum_{j\in \mf Q^{(p)}_a} \sum_{k\in \mf Q^{(p)}_b} \omega_j \,
r^{(p)}(j,k)  \;,
\end{equation}
where $ I_{\bb L^{(p)}_a} (\,\cdot\,)$ represents the level two large
deviations rate functional associated to the generator $\bb L^{(p)}_a$.
Note that we can restrict the first sum in the second term of the
right-hand side to the transient equivalent classes.  The main result
of this section reads as follows.

\begin{proposition}
\label{l16}
Assume that conditions {\rm (H1), (H2) and (H4a)} hold.  Fix
$1\le p\le \mf q$. For $\omega \in \ms P(S_p)$, let
$\mu_n \in \ms P(\ms V^{(p)}_n)$ be the measure given by
\begin{equation}
\label{26}
\mu_n(\,\cdot\,) \;=\; \sum_{j\in S_p} \omega_j \, \pi^{p,j}_n
(\,\cdot\,) \;.
\end{equation}
Assume that $\omega_j>0$ for all $j\in S_p$. Then,
$\mu_n\to \mu := \sum_{j\in S_p} \omega_j \, \pi^{(p)}_j$ and
\begin{equation*}
\limsup_{n\to\infty} \theta^{(p)}_n\, \ms I^{(p)}_n(\mu_n) \;\le\; \ms
I^{(p)}(\mu)\;.  
\end{equation*}
\end{proposition}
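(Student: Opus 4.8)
The plan is to compute $\theta^{(p)}_n\,\ms I^{(p)}_n(\mu_n)$ almost explicitly, exploiting that $\mu_n$ is piecewise proportional to $\pi_n$. The convergence $\mu_n\to\mu$ is immediate: $\mu_n=\sum_{j\in S_p}\omega_j\,\pi^{p,j}_n$ is a finite convex combination of the conditioned measures $\pi^{p,j}_n$, and each of them converges weakly to $\pi^{(p)}_j$ by Lemma \ref{l11}. For the $\limsup$ bound, write $\bar\pi_n$ for $\pi_n$ conditioned to $\ms V^{(p)}_n$. Since $X^{(n)}_t$ is reversible, so is its trace $Y^{\ms V^{(p)}_n}_t$, with reversible measure $\bar\pi_n$ (cf.\ \cite{bl2} and \eqref{08b}); applying \eqref{f6} to the trace chain,
\begin{equation*}
\ms I^{(p)}_n(\mu_n)\;=\;\tfrac12\sum_{x,\,y\in\ms V^{(p)}_n}\bar\pi_n(x)\,R^p_n(x,y)\,\big(\sqrt{f_n(y)}-\sqrt{f_n(x)}\big)^2\;,\qquad f_n:=\tfrac{d\mu_n}{d\bar\pi_n}\;.
\end{equation*}

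The key observation is that on each well $\ms V^{p,j}_n$ the measure $\mu_n$ equals $\omega_j$ times $\pi_n$ renormalized on that well, so $f_n$ is constant there, with value $c_{n,j}:=\omega_j\,\pi_n(\ms V^{(p)}_n)/\pi_n(\ms V^{p,j}_n)\in(0,\infty)$ (using $\omega_j>0$); equivalently $\pi_n(\ms V^{p,j}_n)/\pi_n(\ms V^{(p)}_n)=\omega_j/c_{n,j}$. Hence every intra-well term above vanishes, and grouping the surviving terms by the pair of wells and using \eqref{20} yields the exact identity
\begin{equation*}
\theta^{(p)}_n\,\ms I^{(p)}_n(\mu_n)\;=\;\tfrac12\sum_{i\neq j\in S_p}\omega_i\,\big[\theta^{(p)}_n\,r^{\mf V_p}_n(i,j)\big]\,\Big(\sqrt{c_{n,j}/c_{n,i}}\,-\,1\Big)^2\;.
\end{equation*}

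Next I would pass to the $\limsup$ term by term. By (H1), $\theta^{(p)}_n r^{\mf V_p}_n(i,j)\to r^{(p)}(i,j)$, and trace-reversibility \eqref{08b} yields $\omega_i\,\theta^{(p)}_n r^{\mf V_p}_n(i,j)/c_{n,i}=\omega_j\,\theta^{(p)}_n r^{\mf V_p}_n(j,i)/c_{n,j}$. For pairs with $r^{(p)}(i,j)>0$, Lemma \ref{l09} shows that $c_{n,j}/c_{n,i}=(\omega_j/\omega_i)\,\pi_n(\ms V^{p,i}_n)/\pi_n(\ms V^{p,j}_n)$ converges, with limit $(\omega_j/\omega_i)\,r^{(p)}(j,i)/r^{(p)}(i,j)$, so the $(i,j)$-term converges. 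For pairs with $r^{(p)}(i,j)=0$, the elementary bound $(\sqrt x-1)^2\le 2x+2$ combined with the identity above — which rewrites $\omega_i\theta^{(p)}_n r^{\mf V_p}_n(i,j)\,(c_{n,j}/c_{n,i})$ as $\omega_j\theta^{(p)}_n r^{\mf V_p}_n(j,i)\to\omega_j r^{(p)}(j,i)$ — gives $\limsup_n[(i,j)\text{-term}]\le\omega_j r^{(p)}(j,i)$. Summing these contributions over $i\neq j$ provides an upper bound for $\limsup_n\theta^{(p)}_n\ms I^{(p)}_n(\mu_n)$.

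It then remains to check that this upper bound equals $\ms I^{(p)}(\mu)$. Since on each equivalence class of $\bb X^{(p)}_t$ the reflected chain is reversible (Corollary \ref{l10}, Lemma \ref{l08}), $r^{(p)}(j,i)/r^{(p)}(i,j)$ equals the stationary-measure ratio $M^{(p)}_m(i)/M^{(p)}_m(j)$ (resp.\ $m^{p,a}_i/m^{p,a}_j$), while for $i,j$ in distinct classes at most one of $r^{(p)}(i,j),r^{(p)}(j,i)$ is positive; a short computation using $\Omega_a\,\omega^{(a)}_k=\omega_k$ and the identity $\big(\sqrt{\omega_jM_i/(\omega_iM_j)}-1\big)^2=(M_i/\omega_i)\big(\sqrt{\omega_j/M_j}-\sqrt{\omega_i/M_i}\big)^2$ then matches the intra-class part of the sum with $\sum_a\Omega_a\,I_{\bb L^{(p)}_a}(\omega^{(a)})$ and the inter-class part with the flux term of \eqref{18}, so the bound is precisely the right-hand side of \eqref{18}, i.e.\ $\ms I^{(p)}(\mu)$. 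I expect the only delicate point to be the bookkeeping for pairs $i,j$ in distinct equivalence classes, where $c_{n,j}/c_{n,i}$ degenerates to $0$ or $\infty$; there one must verify that the diverging factor is always multiplied by a rate $\theta^{(p)}_n r^{\mf V_p}_n(i,j)$ vanishing at the compensating speed, which is exactly what \eqref{08b} and Lemma \ref{l09} supply, the remainder being routine algebra.
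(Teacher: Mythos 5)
Your route is essentially the paper's: you identify that $d\mu_n/d\bar\pi_n$ is constant on each well, reduce the trace Dirichlet form to a sum over ordered pairs of wells weighted by $\theta^{(p)}_n r^{\mf V_p}_n(i,j)$, and then split into pairs according to whether the limiting rates vanish, matching the outcome against \eqref{18}. The reduction and the treatment of the non-degenerate pairs are correct, and your handling of the doubly-degenerate case ($r^{(p)}(i,j)=r^{(p)}(j,i)=0$) via the identity $\omega_i\theta^{(p)}_n r^{\mf V_p}_n(i,j)\,(c_{n,j}/c_{n,i})=\omega_j\theta^{(p)}_n r^{\mf V_p}_n(j,i)$ is in fact cleaner than the paper's subsequence argument.

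There is, however, a quantitative slip that breaks the final matching. For a cross-class pair with $r^{(p)}(i,j)=0<r^{(p)}(j,i)$, your first case gives the exact limit $\tfrac12\,\omega_j r^{(p)}(j,i)$ for the $(j,i)$-term (since $c_{n,i}/c_{n,j}\to 0$), while your bound $(\sqrt{x}-1)^2\le 2x+2$ gives $\limsup_n[(i,j)\text{-term}]\le \omega_j r^{(p)}(j,i)$, i.e.\ \emph{twice} the exact limit of that term. Summing, your upper bound for this pair is $\tfrac32\,\omega_j r^{(p)}(j,i)$, whereas its contribution to the flux term of \eqref{18} is $\omega_j r^{(p)}(j,i)$; so the assembled bound is strictly larger than $\ms I^{(p)}(\mu)$ and the concluding step ``check that this upper bound equals $\ms I^{(p)}(\mu)$'' fails as written. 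The fix is one line: either use the sharp elementary bound $(\sqrt{x}-1)^2\le x+1$, which yields $\limsup_n[(i,j)\text{-term}]\le \tfrac12\omega_j r^{(p)}(j,i)+\tfrac12\omega_i r^{(p)}(i,j)=\tfrac12\omega_j r^{(p)}(j,i)$, or observe that in this case $c_{n,j}/c_{n,i}\to\infty$, so
\begin{equation*}
\tfrac12\,\omega_i\,\theta^{(p)}_n r^{\mf V_p}_n(i,j)\big(\sqrt{c_{n,j}/c_{n,i}}-1\big)^2
=\tfrac12\,\omega_j\,\theta^{(p)}_n r^{\mf V_p}_n(j,i)\big(1-\sqrt{c_{n,i}/c_{n,j}}\big)^2
\;\longrightarrow\;\tfrac12\,\omega_j\,r^{(p)}(j,i)\;,
\end{equation*}
exactly as in the paper, where the detailed-balance relation \eqref{08b} is used to convert the divergent term into the convergent one. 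With either correction the pair contributes exactly $\omega_j r^{(p)}(j,i)$ and your identification with \eqref{18} goes through.
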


\begin{proof}
Fix $1\le p\le \mf q$, and let $\pi^{(p)}_n \in \ms P(\ms V^{(p)}_n)$ be
the measure given by
$\color{blue} \pi^{(p)}_n (x) := \pi_n(x) / \pi_n(\ms V^{(p)}_n)$.  By
Lemma \ref{l11}, $\mu_n$ converges to $\mu$ in the weak topology.  On
the other hand, as by \cite[Proposition 6.3]{bl2} the trace process is
reversible, and since
\begin{equation*}
\frac{d\, \mu_n}{d\, \pi^{(p)}_n} \, (x) \;=\;
\frac{\omega_j }{\pi_n(\ms V^{p,j}_n)}\, \pi_n(\ms V^{(p)}_n)\;, \quad
x\,\in\, \ms V^{p,j}_n\;,
\end{equation*}
by formula \eqref{f6} for the trace process and elementary
simplifications,
\begin{equation*}
\ms I^{(p)}_n(\mu_n) \;=\; \frac{1}{2}\,
\sum_{j\in S_p}\sum_{k\not =j} \Big(
\sqrt{\frac{\omega_k }{\pi_n(\ms V^{p,k}_n)}}
\,-\, \sqrt{\frac{\omega_j }{\pi_n(\ms V^{p,j}_n)}} \Big)^2
\sum_{x\in \ms V^{p,j}_n} \pi_n (x) \, R^p_n(x, \ms V^{p,k}_n)\;.
\end{equation*}

By Lemma \ref{l08} and condition (H1), for $1\le a \le m_p$. 
\begin{equation*}
\begin{aligned}
& \lim_{n\to \infty} \frac{\theta^{(p)}_n}{2}\,
\sum_{j\in \mf D^{(p)}_a}\sum_{k\in \mf D^{(p)}_a \setminus \{j\}}
\Big( \sqrt{\frac{\omega_k }{\pi_n(\ms V^{p,k}_n)}}
\,-\, \sqrt{\frac{\omega_j }{\pi_n(\ms V^{p,j}_n)}} \Big)^2
\sum_{x\in \ms V^{p,j}_n} \pi_n (x) \, R^p_n(x, \ms V^{p,k}_n) \\
&\qquad =\;
\frac{1}{2}\,
\sum_{j\in \mf D^{(p)}_a}\sum_{k\in \mf D^{(p)}_a \setminus \{j\}}
\Big( \sqrt{\frac{\omega_k }{m^{p,a}_k}}
\,-\, \sqrt{\frac{\omega_j }{m^{p,a}_j}} \Big)^2
m^{p,a}_j \,  r^{(p)}(j,k) \;=\; \Omega_a\, I_{\bb L^{(p)}_a} (\omega^{(a)}) \;.
\end{aligned}
\end{equation*}
This expression corresponds to the first term in \eqref{18}.

It remains to consider the case where $j$ and $k$ belongs to different
equivalent classes, so that either $r^{(p)}(j,k) = 0$ or
$r^{(p)}(k,j) = 0$. Assume first that $r^{(p)}(j,k) > 0$ and
$r^{(p)}(k,j) = 0$. By \eqref{08} and condition (H1),
$\pi_n(\ms V^{p,j}_n)/ \pi_n(\ms V^{p,k}_n) \to 0$. Hence, dividing
and multiplying next expression by $\pi_n(\ms V^{p,j}_n)$ yields, by
condition (H1), that
\begin{equation*}
\lim_{n\to \infty} \theta^{(p)}_n\,
\Big( \sqrt{\frac{\omega_k }{\pi_n(\ms V^{p,k}_n)}}
\,-\, \sqrt{\frac{\omega_j }{\pi_n(\ms V^{p,j}_n)}} \Big)^2
\sum_{x\in \ms V^{p,j}_n} \pi_n (x) \, R^p_n(x, \ms V^{p,k}_n)
\;=\; \omega_j\, r^{(p)}(j,k) \;.
\end{equation*}
Similarly, by the detailed-balance relation \eqref{08b},
\begin{equation*}
\lim_{n\to \infty} \theta^{(p)}_n\,
\Big( \sqrt{\frac{\omega_k }{\pi_n(\ms V^{p,k}_n)}}
\,-\, \sqrt{\frac{\omega_j }{\pi_n(\ms V^{p,j}_n)}} \Big)^2
\sum_{x\in \ms V^{p,k}_n} \pi_n (x) \, R^p_n(x, \ms V^{p,j}_n)
\;=\; \omega_j\, r^{(p)}(j,k) \;.
\end{equation*}
These expressions correspond to the second term in \eqref{18}.

Finally, we consider the case where $r^{(p)}(j,k) = r^{(p)}(k,j) =
0$. Let $n'$ be a subsequence such that
\begin{equation*}
\begin{aligned}
& \limsup_{n\to \infty} \theta^{(p)}_n\,
\Big( \sqrt{\frac{\omega_k }{\pi_n(\ms V^{p,k}_n)}}
\,-\, \sqrt{\frac{\omega_j }{\pi_n(\ms V^{p,j}_n)}} \Big)^2
\sum_{x\in \ms V^{p,j}_n} \pi_n (x) \, R^p_n(x, \ms V^{p,k}_n) \\
& \quad \;=\;
\lim_{n'\to \infty} \theta^{(p)}_{n'}\,
\Big( \sqrt{\frac{\omega_k }{\pi_{n'}(\ms V^{p,k}_{n'})}}
\,-\, \sqrt{\frac{\omega_j }{\pi_{n'}(\ms V^{p,j}_{n'})}} \Big)^2
\sum_{x\in \ms V^{p,j}_{n'}} \pi_{n'} (x) \, R^p_{n'}(x, \ms V^{p,k}_{n'})
\;.
\end{aligned}
\end{equation*}
Assume that
$\pi_{n'}(\ms V^{p,j}_{n'})/ \pi_{n'}(\ms V^{p,k}_{n'}) \le 1$ for
infinitely many ${n'}$'s. Denote by $n''$ the subsequence for which
this inequality holds always. Dividing and multiplying the previous
expression by $\pi_{n''}(\ms V^{p,j}_{n''})$, by condition (H1), as
$r^{(p)}(j,k) =0$ and
$\pi_{n''}(\ms V^{p,j}_{n''})/ \pi_{n''}(\ms V^{p,k}_{n''}) \le 1$,
\begin{equation*}
\limsup_{n''\to \infty} \theta^{(p)}_{n''}\,
\Big( \sqrt{\frac{\omega_k }{\pi_{n''}(\ms V^{p,k}_{n''})}}
\,-\, \sqrt{\frac{\omega_j }{\pi_{n''}(\ms V^{p,j}_{n''})}} \Big)^2
\sum_{x\in \ms V^{p,j}_{n''}} \pi_{n''} (x) \,
R^p_{n''}(x, \ms V^{p,k}_{n''}) \;=\; 0 \;.
\end{equation*}

In the case where
$\pi_{n'}(\ms V^{p,j}_{n'})/ \pi_{n'}(\ms V^{p,k}_{n'}) \ge 1$ for
infinitely many ${n'}$'s, we use the detailed balance condition
\eqref{08b} to replace
\begin{equation*}
\sum_{x\in \ms V^{p,j}_{n'}} \pi_{n'} (x) \, R^p_{n'}(x, \ms
V^{p,k}_{n'})
\text{\; by\;}
\sum_{x\in \ms V^{p,k}_{n'}} \pi_{n'} (x) \, R^p_{n'}(x, \ms
V^{p,j}_{n'})
\end{equation*}
and repeat the argument presented above.

The assertion of the proposition follows from the previous estimates
and formula \eqref{18} for $\ms I^{(p)} (\mu)$.
\end{proof}

\begin{remark}
In the previous proof, we only used the hypothesis that $\omega_j>0$
for all $j\in S_p$ to be able to use formula \eqref{18} for $\ms
I^{(p)}(\mu)$. 
\end{remark}

We conclude this section proving identity \eqref{10} by induction.
For $p=2$ the equality holds because the sums in \eqref{10} for $p=2$
and in \eqref{09} for $p=1$ are performed over the same set of
indices, and $\pi^{(1)}_k = \delta_{x_k}$.
$M^{(1)}_j (k) = m_{2,j}(k)$. Suppose that \eqref{10} holds for all
$2\le p\le r$. By \eqref{09} and by \eqref{10} for $p=r$, for all
$j\in S_{r+1}$,
\begin{equation*}
\pi^{(r+1)}_j (\,\cdot\,)  \;=\;
\sum_{k\in S_r : \ms V^{r,k}_n \subset \ms V^{r+1,j}_n} \;
\sum_{\ell \in S_1: \ms V^{1,\ell}_n \subset \ms V^{r,k}_n}
M^{(r)}_j (k) \,
m_{r,k} (\ell) \,  \delta_{x_\ell}  (\,\cdot\,) \;. 
\end{equation*}
The sums can be merged as a sum over all indices $\ell \in S_1$ such
that $\ms V^{1,\ell}_n \subset \ms V^{r+1,j}_n$. This obvious
conclusion can be reached due to the representation of the sums as
ones over indices of sets contained in other sets. On the other hand,
by \eqref{16}, $M^{(r)}_j (k) \, m_{r,k} (\ell)  \,=\, m_{r+1,j}
(\ell)$. Hence, the previous sum is equal to
\begin{equation*}
\pi^{(r+1)}_j (\,\cdot\,)  \;=\;
\sum_{\ell \in S_1: \ms V^{1,\ell}_n \subset \ms V^{r+1,j}_n}
m_{r+1,j} (\ell) \,  \delta_{x_\ell}  (\,\cdot\,) \;,
\end{equation*}
which is \eqref{10} for $r+1$. This completes the proof of the claim.

\section{Rate functional estimates}
\label{sec4}

Fix $1\le p\le \mf q$. In the proof of the \emph{$\Gamma$-liminf} of
$\theta^{(p)}_n\, \ms I_n$, we may restrict our attention to sequences
of measures $(\mu_n:n\ge 1)$ which fulfills condition \eqref{02}.  The
main result of this section, Proposition \ref{l03}, states that for
such measures we recover the limit in \eqref{01a} if we replace in
\eqref{20} the conditional measure $\pi_n (\cdot)/\pi_m(\ms V^{p,j})$
by the measure $\mu_n$.

The proof of this proposition relies on two simple lemmata. The first
one, Lemma \ref{l02}, asserts that the energy of a function on the
union of disjoint subsets of $\Xi_n$ is bounded by the total
energy. The second one, Lemma \ref{l15}, states that under condition
(H4b) the measure $\mu_n (\ms V^{1,j}_n) $ of a valley $\ms V^{1,j}_n$
converges to $\omega_j$ for any sequence of measures $\mu_n$ which
converges weakly to $\sum_{j\in S_1} \omega_j \, \delta_{x_j}$ for
some $\omega\in \ms P(S_1)$.

Recall from \eqref{eq:restricted-DF} the definition of
the Dirichlet form $D_n(\,\cdot\,,\,\cdot\,)$.

\begin{lemma}
\label{l02}
Let $\ms U^{(1)}_n, \dots, \ms U^{(\mf m)}_n$ be disjoint subsets of
$\Xi_n$, $n\ge 1$.  Under assumption \eqref{02},
\begin{equation*}
\limsup_{n\to\infty} \, \theta^{(p)}_n\,
\sum_{j=1}^{\mf m}  D_n(\ms U^{(j)}_n, \sqrt{f^{(j)}_n}) 
\;<\; \infty\;,
\end{equation*}
where $f^{(j)}_n (z) = \mu_n(z) /\hat \pi^{(j)}_n(z)$, $\hat
\pi^{(j)}_n(z) = \pi_n(z)/\pi_n(\ms U^{(j)}_n)$, $z\in \ms U^{(j)}_n$.
\end{lemma}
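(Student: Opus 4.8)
The plan is to bound the total Dirichlet form $\sum_{j=1}^{\mf m} D_n(\ms U^{(j)}_n, \sqrt{f^{(j)}_n})$ by the Dirichlet form of $\sqrt{f_n}$ on all of $\Xi_n$, where $f_n(z) = \mu_n(z)/\pi_n(z)$, and then recognize the latter as $\ms I_n(\mu_n)$ via formula \eqref{f6}. The key observation is that on each $\ms U^{(j)}_n$ the function $\sqrt{f^{(j)}_n}$ differs from $\sqrt{f_n}$ only by the constant multiplicative factor $\sqrt{\pi_n(\ms U^{(j)}_n)}$: indeed $f^{(j)}_n(z) = \mu_n(z)/\hat\pi^{(j)}_n(z) = \pi_n(\ms U^{(j)}_n)\, f_n(z)$ for $z\in \ms U^{(j)}_n$. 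Since the Dirichlet form $D_n(\ms U^{(j)}_n,\cdot)$ involves only differences $[h(y)-h(x)]$ with both $x,y$ in $\ms U^{(j)}_n$, scaling $h$ by a constant $c$ scales $D_n(\ms U^{(j)}_n,h)$ by $c^2$; hence
\begin{equation*}
\pi_n(\ms U^{(j)}_n)\, D_n(\ms U^{(j)}_n, \sqrt{f^{(j)}_n})
\;=\; \pi_n(\ms U^{(j)}_n)\cdot \pi_n(\ms U^{(j)}_n)\, D_n(\ms U^{(j)}_n,\sqrt{f_n})\;.
\end{equation*}

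More carefully, unfolding the definition \eqref{eq:restricted-DF},
\begin{equation*}
\pi_n(\ms U^{(j)}_n)^2\, D_n(\ms U^{(j)}_n, \sqrt{f_n})
\;=\; \frac{1}{2}\,\pi_n(\ms U^{(j)}_n)\sum_{x,y\in \ms U^{(j)}_n}
\pi_n(x)\, R_n(x,y)\, [\sqrt{f_n(y)}-\sqrt{f_n(x)}]^2\;,
\end{equation*}
so that, since $\pi_n(\ms U^{(j)}_n)\le 1$ and the sets $\ms U^{(j)}_n$ are disjoint,
\begin{equation*}
\sum_{j=1}^{\mf m} D_n(\ms U^{(j)}_n, \sqrt{f^{(j)}_n})
\;=\; \sum_{j=1}^{\mf m} \pi_n(\ms U^{(j)}_n)\, D_n(\ms U^{(j)}_n, \sqrt{f_n})
\;\le\; \frac{1}{2}\sum_{x,y\in\Xi_n}\pi_n(x)\, R_n(x,y)\,[\sqrt{f_n(y)}-\sqrt{f_n(x)}]^2\;.
\end{equation*}
The right-hand side is exactly $\langle \sqrt{f_n},(-\ms L_n)\sqrt{f_n}\rangle_{\pi_n} = \ms I_n(\mu_n)$ by \eqref{f6}. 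Multiplying by $\theta^{(p)}_n$ and invoking assumption \eqref{02}, $\limsup_n \theta^{(p)}_n \ms I_n(\mu_n) < \infty$, gives the claim.

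I do not expect any serious obstacle here; the only point requiring a little care is to make sure the restricted Dirichlet forms, which carry the normalizing prefactor $1/\pi_n(\ms U^{(j)}_n)$, combine correctly with the constant-scaling identity $D_n(\ms U^{(j)}_n, c\,h) = c^2 D_n(\ms U^{(j)}_n, h)$, and that the resulting sum of nonnegative terms over disjoint sets is dominated by the full Dirichlet form over $\Xi_n$ (dropping all cross terms between distinct $\ms U^{(j)}_n$ and all terms touching $(\cup_j \ms U^{(j)}_n)^c$, which is legitimate since every summand is nonnegative). Everything else is bookkeeping with \eqref{f6} and \eqref{eq:restricted-DF}.
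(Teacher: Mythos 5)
Your argument is correct and is essentially the paper's own proof: both rest on the observation that the normalizing prefactor $1/\pi_n(\ms U^{(j)}_n)$ in \eqref{eq:restricted-DF} exactly cancels the constant rescaling $f^{(j)}_n=\pi_n(\ms U^{(j)}_n)\,f_n$ on $\ms U^{(j)}_n$, so that $\sum_j D_n(\ms U^{(j)}_n,\sqrt{f^{(j)}_n})$ is a sub-sum of the full Dirichlet form $\langle\sqrt{f_n},(-\ms L_n)\sqrt{f_n}\rangle_{\pi_n}=\ms I_n(\mu_n)$, and the claim follows from \eqref{02} after dropping the nonnegative cross terms. The only caveat (shared with the paper, which also just says the assertion "follows from \eqref{02}") is that \eqref{02} is a $\liminf$ bound while the conclusion is a $\limsup$ bound, so strictly one obtains the statement along a subsequence realizing the $\liminf$; this is the standard implicit reduction in $\Gamma$-liminf arguments and not a defect of your write-up.
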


\begin{proof}
By definition, 
\begin{equation*}
\ms I_n(\mu_n) \;=\; \frac{1}{2} \sum_{x\in \Xi_n}
\sum_{y\in \Xi_n \setminus \{x\}} \pi_n(x) \, R_n(x,y)\,
[\, \sqrt{h(y)} - \sqrt{ h(x)} \,]^2\;,
\end{equation*}
where $h(z) = \mu_n(z) /\pi_n(z)$. The right-hand side is bounded
below by
\begin{equation*}
\frac{1}{2} \sum_{j=1}^{\mf m} 
\sum_{x\in \ms U^{(j)}_n}
\sum_{y\in  \ms  U^{(j)}_n \setminus \{x\}} \pi_n(x) \, R_n(x,y)\,
[\, \sqrt{h(y)} - \sqrt{ h(x)} \,]^2\;.
\end{equation*}
For a fixed $1\le j \le \mf m$, divide and multiply the summand by
$\pi_n(\ms U^{(j)}_n)$ to rewrite this sum as
\begin{equation*}
\sum_{j=1}^{\mf m} D_n(\ms U^{(j)}_n, \sqrt{f^{(j)}_n}) \;.
\end{equation*}
The assertion of the lemma now follows from condition \eqref{02}.
\end{proof}

\begin{lemma}
\label{l15}
Assume that condition {\rm (H4b)} is fulfilled. Fix
$1\le p\le \mf q$. If $\mu_n$ converges weakly to
$\sum_{j\in S_1} \omega_j \, \delta_{x_j}$ for some
$\omega\in \ms P(S_1)$, then, for all $j\in S_1$,
$\displaystyle \lim_{n\to\infty} \mu_n (\ms V^{1,j}_n) \;=\;
\omega_j$.
\end{lemma}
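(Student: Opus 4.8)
The plan is to sandwich $\mu_n(\ms V^{1,j}_n)$ between a lower bound coming from the inclusion in (H4b) and an upper bound coming from disjointness, both of which tend to $\omega_j$.

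First I would establish that $\liminf_{n\to\infty}\mu_n(\ms V^{1,j}_n)\ge \omega_j$ for every $j\in S_1$. By (H4b), $\ms V^{1,j}_n\supset B_\epsilon(x_j)\cap\Xi_n$, and, recalling that $\mu_n\in\ms P(\Xi_n)$, this gives $\mu_n(\ms V^{1,j}_n)\ge \mu_n\big(B_\epsilon(x_j)\cap\Xi_n\big)=\mu_n(B_\epsilon(x_j))$. Since $B_\epsilon(x_j)$ is open, the portmanteau theorem applied to the weak convergence $\mu_n\to\sum_{k\in S_1}\omega_k\,\delta_{x_k}$ yields $\liminf_n\mu_n(B_\epsilon(x_j))\ge \big(\sum_{k\in S_1}\omega_k\,\delta_{x_k}\big)(B_\epsilon(x_j))$. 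Finally, as the balls $B_\epsilon(x_k)$, $k\in S_1$, are pairwise disjoint by (H4b) while $x_j\in B_\epsilon(x_j)$, the right-hand side equals $\omega_j$, proving the lower bound.

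Next I would use that the sets $\ms V^{1,j}_n$, $j\in S_1$, are pairwise disjoint, being members of the partition $\mf V_1$, so that $\sum_{j\in S_1}\mu_n(\ms V^{1,j}_n)\le 1$. Fixing $j\in S_1$ and writing $\mu_n(\ms V^{1,j}_n)\le 1-\sum_{k\in S_1\setminus\{j\}}\mu_n(\ms V^{1,k}_n)$, I would pass to the $\limsup$ in $n$, invoke superadditivity of $\liminf$ together with the lower bounds already obtained, and use $\sum_{k\in S_1}\omega_k=1$ to conclude $\limsup_n\mu_n(\ms V^{1,j}_n)\le 1-\sum_{k\in S_1\setminus\{j\}}\omega_k=\omega_j$. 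Combining this with the lower bound gives $\lim_n\mu_n(\ms V^{1,j}_n)=\omega_j$.

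The argument is elementary and I do not expect a genuine obstacle; the only point needing a little care is that the finiteness of $S_1$ is what legitimizes bounding the $\limsup$ at the index $j$ by the sum of the $\liminf$'s over the remaining finitely many indices. Note also that the index $p$ plays no role in the statement or the proof, the conclusion being independent of $p$.
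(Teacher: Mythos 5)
Your proof is correct. The lower bound is exactly the paper's: by (H4b), $\mu_n(\ms V^{1,j}_n)\ge\mu_n(B_\epsilon(x_j))$, and the open-set half of the portmanteau theorem together with the disjointness of the balls gives $\liminf_n\mu_n(B_\epsilon(x_j))\ge\omega_j$ (in fact the limit exists and equals $\omega_j$, since no $x_k$ lies on the boundary of $B_\epsilon(x_j)$). For the upper bound you diverge slightly from the paper: you exploit the disjointness of the wells $\ms V^{1,k}_n$ (they are members of the partition $\mf V_1$ from (H1)) to write $\mu_n(\ms V^{1,j}_n)\le 1-\sum_{k\ne j}\mu_n(\ms V^{1,k}_n)$ and then feed back the lower bounds for the other finitely many indices. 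The paper instead upgrades (H4b) to the two-sided inclusion $B_\epsilon(x_j)\cap\Xi_n\subset\ms V^{1,j}_n\subset[B_\epsilon(x_j)\cup\ms A]\cap\Xi_n$ with $\ms A=[\cup_k B_\epsilon(x_k)]^c$, and uses $\mu_n(\ms A)\to 0$. The two closings are essentially equivalent in content — the paper's upper inclusion itself rests on the same disjointness of the wells that you invoke — but yours avoids the separate verification that the closed set $\ms A$ is asymptotically negligible, at the price of the (harmless, as you note) superadditivity step over the finite index set $S_1$.
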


\begin{proof}
As $\mu_n$ converges weakly to
$\sum_{j\in S_1} \omega_j \, \delta_{x_j}$, since the balls
$B_\epsilon(x_j)$ are disjoint,
$\mu_n(B_\epsilon(x_j)) \to \omega_j$ for $j\in S_1$.
Let $\ms A = [\, \cup_{j\in S_1} B_\epsilon(x_j)\,]^c$, so that
$\mu_n(\ms A)\to 0$.

By (H4b), $B_\epsilon(x_j) \cap \Xi_n \subset \ms V^{1,j}_n \subset
[\, B_\epsilon(x_j) \cup  \ms A\,] \cap \Xi_n$ so that
\begin{equation*}
\begin{aligned}
& \omega_j \;=\; \lim_{n\to\infty} \mu_n(B_\epsilon(x_j))
\;\le\; \liminf_{n\to\infty} \mu_n(\ms V^{1,j}_n) \\
&\quad \;\le\; \limsup_{n\to\infty} \mu_n(\ms V^{1,j}_n)
\;\le \; \lim_{n\to\infty} \{ \mu_n(B_\epsilon(x_j) )
\,+\, \mu_n(\ms A)\,\} \;=\; \omega_j\;,
\end{aligned}
\end{equation*}
as claimed.
\end{proof}

\begin{remark}
In the previous lemma, we proved that, for any sequence
$\mu_n\to \sum_{j\in S_1}\omega_j\, \delta_{x_j}$, ${\rm (H4b)}$
implies that $ \mu_n(\ms V_n^{1,j}) \to \omega_j $. The latter is the
\textit{conclusion} of condition {\rm (H6b)}. Note that in this lemma
we do \textit{not} use the additional condition \eqref{02}. Hence,
Lemma \ref{l15} states that {\rm (H4b)} is a stronger hypothesis than
{\rm (H6b)}, but which may be too restrictive in some situations, as
discussed in the paragraph above condition {\rm (H6b)}.
\end{remark}

\begin{proposition}
\label{l03}
Assume that conditions {\rm (H1), (H2), (H4b) and (H5)} hold.
Fix $1\le p\le \mf q$. Let $(\mu_n:n\ge 1)$ be a sequence in
$\ms P(\Xi_n)$ such that
$\mu_n\to \sum_{j\in S_p} \omega_j\, \pi^{(p)}_j$ weakly for some
$\omega\in \ms P(S_p)$, and which fulfills \eqref{02}. 
Then,
\begin{equation*}
\lim_{n\to\infty} \, \theta^{(p)}_n\,
\sum_{x\in \ms V^{p,j}_n} \mu_n (x)\, R^{(p)}_n(x, \ms V^{p,k}_n)
\;=\; \omega_j\, r^{(p)}(j,k) 
\end{equation*}
for all $k\neq j\in S_p$.
\end{proposition}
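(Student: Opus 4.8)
The plan is to reduce the statement to the already established convergence $\theta^{(p)}_n\, r^{\mf V_p}_n(j,k)\to r^{(p)}(j,k)$ of condition (H1), by proving that $\mu_n$, restricted to each basal well $\ms V^{1,i}_n$ with $i\in S_{p,j}$, is proportional to the conditioned stationary state $\pi^{1,i}_n$ up to a relative error that is uniform in $x$ and tends to $0$. First I would pin down the weak limit at the level of the basal sets. By identity \eqref{10}, $\mu:=\sum_{j\in S_p}\omega_j\,\pi^{(p)}_j=\sum_{i\in S_1}\tilde\omega_i\,\delta_{x_i}$, where $\tilde\omega_i:=\omega_j\, m_{p,j}(i)$ if $i\in S_{p,j}$ for the (unique) $j\in S_p$ with $\ms V^{1,i}_n\subset\ms V^{p,j}_n$, and $\tilde\omega_i:=0$ otherwise; note $\tilde\omega\in\ms P(S_1)$. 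Since $\mu_n\to\mu$ weakly and (H4b) holds, Lemma \ref{l15} gives $\mu_n(\ms V^{1,i}_n)\to\tilde\omega_i$ for every $i\in S_1$, and hence, using $\ms V^{p,j}_n=\bigsqcup_{i\in S_{p,j}}\ms V^{1,i}_n$ (a consequence of \eqref{59}), $\mu_n(\ms V^{p,j}_n)=\sum_{i\in S_{p,j}}\mu_n(\ms V^{1,i}_n)\to\sum_{i\in S_{p,j}}\omega_j\, m_{p,j}(i)=\omega_j$.

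The core of the argument is equilibration inside the basal wells. Set $f^{(1,i)}_n:=\mu_n/\pi^{1,i}_n$ on $\ms V^{1,i}_n$, so that $\bb E_{\pi^{1,i}_n}[f^{(1,i)}_n]=\mu_n(\ms V^{1,i}_n)$. Applying Lemma \ref{l02} with $\ms U^{(i)}_n=\ms V^{1,i}_n$, assumption \eqref{02} yields $\limsup_n\theta^{(p)}_n\sum_{i\in S_1}D_n(\ms V^{1,i}_n,\sqrt{f^{(1,i)}_n})<\infty$, hence $D_n(\ms V^{1,i}_n,\sqrt{f^{(1,i)}_n})\le C/\theta^{(p)}_n$ for each $i$ and all large $n$. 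Feeding this into (H5) and using $\beta_n\prec\theta^{(p)}_n$, we get
\begin{equation*}
\delta^{(i)}_n\ :=\ \max_{x\in\ms V^{1,i}_n}\ \sum_{y\in\ms V^{1,i}_n}\pi^{1,i}_n(y)\,\big\{\sqrt{f^{(1,i)}_n}(y)-\sqrt{f^{(1,i)}_n}(x)\big\}^2\ \le\ \beta_n\,\frac{C}{\theta^{(p)}_n}\ \longrightarrow\ 0 .
\end{equation*}
In particular $\mathrm{Var}_{\pi^{1,i}_n}(\sqrt{f^{(1,i)}_n})\le\delta^{(i)}_n$ and, by Jensen, $\{\sqrt{f^{(1,i)}_n}(x)-\bb E_{\pi^{1,i}_n}[\sqrt{f^{(1,i)}_n}]\}^2\le\delta^{(i)}_n$ for every $x\in\ms V^{1,i}_n$; combined with $(\bb E_{\pi^{1,i}_n}[\sqrt{f^{(1,i)}_n}])^2=\mu_n(\ms V^{1,i}_n)-\mathrm{Var}_{\pi^{1,i}_n}(\sqrt{f^{(1,i)}_n})\to\tilde\omega_i$, this gives $f^{(1,i)}_n(x)\to\tilde\omega_i$ \emph{uniformly} over $x\in\ms V^{1,i}_n$ (the degenerate case $\tilde\omega_i=0$ included).

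To conclude, write $\mu_n(x)=f^{(1,i)}_n(x)\,\pi^{1,i}_n(x)=(\tilde\omega_i+o(1))\,\pi^{1,i}_n(x)$ uniformly for $x\in\ms V^{1,i}_n$, so that
\begin{equation*}
\theta^{(p)}_n\sum_{x\in\ms V^{p,j}_n}\mu_n(x)\,R^{(p)}_n(x,\ms V^{p,k}_n)
=\sum_{i\in S_{p,j}}(\tilde\omega_i+o(1))\ \theta^{(p)}_n\sum_{x\in\ms V^{1,i}_n}\pi^{1,i}_n(x)\,R^{(p)}_n(x,\ms V^{p,k}_n) .
\end{equation*}
By \eqref{20}, $\theta^{(p)}_n\, r^{\mf V_p}_n(j,k)=\sum_{i\in S_{p,j}}\tfrac{\pi_n(\ms V^{1,i}_n)}{\pi_n(\ms V^{p,j}_n)}\,\theta^{(p)}_n\sum_{x\in\ms V^{1,i}_n}\pi^{1,i}_n(x)R^{(p)}_n(x,\ms V^{p,k}_n)\to r^{(p)}(j,k)$; since the summands are non-negative and $\pi_n(\ms V^{1,i}_n)/\pi_n(\ms V^{p,j}_n)\to m_{p,j}(i)\in(0,1)$ by Corollary \ref{l10b}, each factor $\theta^{(p)}_n\sum_{x\in\ms V^{1,i}_n}\pi^{1,i}_n(x)R^{(p)}_n(x,\ms V^{p,k}_n)$ is bounded in $n$. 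Hence the $o(1)$ terms above contribute nothing in the limit, and replacing $\tilde\omega_i=\omega_j\, m_{p,j}(i)$ by $\omega_j\,\pi_n(\ms V^{1,i}_n)/\pi_n(\ms V^{p,j}_n)$ (a change of size $o(1)$ times a bounded quantity) shows the left-hand side equals $\omega_j\,\theta^{(p)}_n r^{\mf V_p}_n(j,k)+o(1)\to\omega_j\, r^{(p)}(j,k)$, as claimed.

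I expect the main obstacle to be the core step: turning the Dirichlet-form estimate supplied by (H5) into genuine pointwise control of $f^{(1,i)}_n$ on the inner boundary of $\ms V^{1,i}_n$, where the trace rates $R^{(p)}_n(x,\cdot)$ are supported, uniformly in $x$ and without ever dividing by the (possibly vanishing) weights $\tilde\omega_i$. This forces one to use the scale separation $\beta_n\prec\theta^{(p)}_n$ together with the mass identity $\bb E_{\pi^{1,i}_n}[f^{(1,i)}_n]=\mu_n(\ms V^{1,i}_n)$ in tandem, and it is precisely here that Remark \ref{rm1} — the maximum in (H5) may be taken over that boundary — is the relevant refinement.
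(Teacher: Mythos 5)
Your proof is correct and uses the same overall architecture as the paper's (decomposition of $\ms V^{p,j}_n$ into the basal wells $\ms V^{1,i}_n$, Lemma \ref{l15} and Corollary \ref{l10b} to identify the limiting weights, Lemma \ref{l02} together with (H5) and $\beta_n\prec\theta^{(p)}_n$ to control intra-well fluctuations, and the bound \eqref{14} coming from (H1) to show that $\theta^{(p)}_n\sum_x\pi^{1,i}_n(x)R^{(p)}_n(x,\ms V^{p,k}_n)$ stays bounded). The one place where you genuinely diverge is the treatment of the fluctuation term. The paper splits the sum as in \eqref{11} into a mean part and a centered part, bounds the centered part via Young's inequality with a free parameter $A_n$ as in \eqref{05}, and then closes the estimate with a bootstrap ($u_n=v_n+w_n$ with $|w_n|\le\delta_n u_n+\epsilon_n$), never obtaining pointwise information on $f_n$. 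You instead combine the variance bound and Jensen's inequality with the mass identity $\sum_y\pi^{1,i}_n(y)f^{(1,i)}_n(y)=\mu_n(\ms V^{1,i}_n)$ to prove that $f^{(1,i)}_n\to\tilde\omega_i$ \emph{uniformly} on each basal well (including the degenerate case $\tilde\omega_i=0$, since $\bb E_{\pi^{1,i}_n}[\sqrt{f^{(1,i)}_n}]\le 1$ keeps the squaring step uniform). This is a strictly stronger intermediate statement, it makes the final limit identification a one-line substitution, and it removes the need for the self-referential estimate; the price is that you use the full strength of the maximum over $x\in\ms V^{1,i}_n$ in (H5), so the weakening described in Remark \ref{rm1} (restricting the maximum to the inner boundary) would no longer suffice for your route, whereas it does for the paper's. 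Your closing paragraph correctly anticipates this as the sensitive point, although your argument as written in fact does not exploit that refinement.
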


\begin{proof}
Fix $1\le p\le \mf q$, $k\neq j\in S_p$. Recall from \eqref{10}
the definition of the set $S_{p,j}$ to write
\begin{equation*}
\sum_{x\in \ms V^{p,j}_n} \mu_n (x)\, R^{(p)}_n(x, \ms V^{p,k}_n)
\;=\;
\sum_{i\in S_{p,j}}
\sum_{x\in \ms V^{1,i}_n} \mu_n (x)\, R^{(p)}_n(x, \ms V^{p,k}_n)\;.
\end{equation*}
Let $f_n(x) = \mu_n(x) / \pi^{1,i}_n(x)$,
$G_n(x) = R^{(p)}_n(x, \ms V^{p,k}_n)$, $x\in \ms V^{p,j}_n$. With
this notation, the previous sum is equal to
\begin{equation*}
\sum_{i\in S_{p,j}}
\sum_{x\in \ms V^{1,i}_n} \pi^{1,i}_n(x) \, f_n(x)\, G_n(x)
\;.
\end{equation*}
This expression can be written as
\begin{equation}
\label{11}
\begin{aligned}
& \sum_{i\in S_{p,j}} \mu_n(\ms V^{1,i}_n) \,
\sum_{x\in \ms V^{1,i}_n} \pi^{1,i}_n(x) \, G_n(x) \\
& \quad
\;+\; \sum_{i\in S_{p,j}}
\sum_{x\in \ms V^{1,i}_n} \pi^{1,i}_n(x) \, G_n(x)\,
\Big\{ \, f_n(x) \,-\, \sum_{y\in \ms V^{1,i}_n} \pi^{1,i}_n(y) \,
f_n(y) \,\Big\} \;.
\end{aligned}
\end{equation}

We consider separately the two terms in \eqref{11}. We claim that
\begin{equation}
\label{12}
\lim_{n\to\infty} \theta^{(p)}_n\,
\sum_{i\in S_{p,j}} \mu_n(\ms V^{1,i}_n) \,
\sum_{x\in \ms V^{1,i}_n} \pi^{1,i}_n(x) \, G_n(x) \;=\;
\omega_j \, r^{(p)}(j,k) \;.
\end{equation}
Indeed, rewrite the first term in \eqref{11} as
\begin{equation}
\label{13}
\begin{aligned}
& \sum_{i\in S_{p,j}} \Big\{\, \mu_n(\ms V^{1,i}_n) \,-\,
\omega_j \, \frac{\pi_n(\ms V^{1,i}_n)}{\pi_n(\ms V^{p,j}_n)} \, \Big \} \,
\sum_{x\in \ms V^{1,i}_n} \pi^{1,i}_n(x) \, G_n(x) \\
&\quad +\; \frac{\omega_j}{\pi_n(\ms V^{p,j}_n)} \,
\sum_{i\in S_{p,j}}  
\sum_{x\in \ms V^{1,i}_n} \pi_n(x) \, G_n(x)
\end{aligned}
\end{equation}
By assumption (H1), the second term multiplied by $\theta^{(p)}_n$
converges to $\omega_j \, r^{(p)}(j,k)$. On the other hand, by
\eqref{10}, Lemma \ref{l15} and Corollary \ref{l10b}, the sequences
$\mu_n(\ms V^{1,i}_n)$ and
$\omega_j \pi_n(\ms V^{1,i}_n)/\pi_n(\ms V^{p,j}_n)$ converge to
$\omega_j \, m_{p,j}(i)$ as $n\to\infty$.  Hence,
$\mu_n(\ms V^{1,i}_n) \,-\, \omega_j \, [\, \pi_n(\ms
V^{1,i}_n)/\pi_n(\ms V^{p,j}_n)\,]$ is a bounded sequence which
converges to $0$ as $n\to\infty$.  Moreover,
\begin{equation}
\label{14}
\theta^{(p)}_n\, \sum_{x\in \ms V^{1,i}_n} \pi^{1,i}_n(x) \, G_n(x)
\;\le \; \theta^{(p)}_n\, \frac{\pi_n(\ms V^{p,j}_n)}{\pi_n(\ms V^{1,i}_n)}
\, \frac{1}{\pi_n(\ms V^{p,j}_n)}
\sum_{x\in \ms V^{p,j}_n} \pi_n(x) \, G_n(x)\;.
\end{equation}
Note that the sum on the right-hand side is now carried over
$x\in \ms V^{p,j}_n$, which explains the inequality.  By condition (H1)
and Corollary \ref{l10b}, the right-hand side converges to
$m_{p,j}(i)^{-1} \, r^{(p)}(j,k)$ and is therefore bounded.  The last
two estimates yield that the first term in \eqref{13} multiplied by
$\theta^{(p)}_n$ vanish as $n\to\infty$, which proves claim
\eqref{12}.

We turn to the second term in \eqref{11}. Fix $i\in S_{p,j}$. We claim
that there exists a finite constant $C_0$, independent of $n$, such
that
\begin{equation}
\label{05}
\begin{aligned}
& \Big|\, \theta^{(p)}_n\, \sum_{x\in \ms V^{1,i}_n} \pi^{1,i}_n(x) \, G_n(x)\,
\sum_{y\in \ms V^{1,i}_n} \pi^{1,i}_n(y) \,
\{ \, f_n(x) \,-\,  f_n(y) \,\} \,\Big| \\
& \quad \le\; 
A_n\, \big\{\, \theta^{(p)}_n\,
\sum_{x\in \ms V^{1,i}_n} \mu_n(x) \, G_n(x) \;+\;
C_0 \,\big\} \;
+\;
C_0\, \frac{\beta_n}{A_n}\, D_n(\ms V^{1,i}_n, \sqrt{f_n}) 
\end{aligned}
\end{equation}
for all $A_n>0$, where $\beta_n$ is the sequence introduced in
condition (H5).

Rewrite $f_n(x) - f_n(y)$ as
$[\, \sqrt{f_n(x)} + \sqrt{f_n(y)}\,]\times [\, \sqrt{f_n(x)} -
\sqrt{f_n(y)}\,]$. By Young's inequality $2ab \le a^2 + b^2$ and since
$G_n$ is non-negative and $(\sqrt{b} + \sqrt{a})^2 \le 2 a + 2 b$, the
left-hand side of \eqref{05} is bounded by
\begin{equation}
\label{03}
\begin{aligned}
& \theta^{(p)}_n\, A_n\, \sum_{x,y\in \ms V^{1,i}_n}
\pi^{1,i}_n(x) \, \pi^{1,i}_n(y)
\, G_n(x)\,
\big\{ \, f_n(y) \,+\,  f_n(x) \,\big\} \\
&\quad \;+\;  \frac{\theta^{(p)}_n}{2A_n} 
\sum_{x\in \ms V^{1,i}_n} \pi^{1,i}_n(x) \, G_n(x)\,
\sum_{y\in \ms V^{1,i}_n} \pi^{1,i}_n(y) \, 
\Big\{ \, \sqrt{f_n(y)} \,-\,  \sqrt{f_n(x)} \,\Big\}^2  
\end{aligned}
\end{equation}
for all $A_n>0$. Since $\pi^{1,i}_n(\ms V^{1,i}_n) = 1$,
the first term is bounded by
\begin{equation*}
\theta^{(p)}_n\, A_n\, 
\sum_{x\in \ms V^{1,i}_n} \pi^{1,i}_n(x) \,  f_n(x) \,  G_n(x) \;+\;
\theta^{(p)}_n\, A_n\, \mu_n(\ms V^{1,i}_n) \, 
\sum_{x\in \ms V^{1,i}_n} \pi^{1,i}_n(x) \, G_n(x)\;.
\end{equation*}
We have shown in \eqref{14} that the sequence
$\theta^{(p)}_n\, \sum_{x\in \ms V^{1,i}_n} \pi^{1,i}_n(x) \, G_n(x)$
is bounded.  The previous displayed equation corresponds therefore to
the first term in \eqref{05}.

By assumption (H5), the second term in \eqref{03} is bounded by
\begin{equation*}
\frac{\beta_n}{A_n}\, D_n(\ms V^{1,i}_n , \sqrt{f_n})
\, \theta^{(p)}_n\, \sum_{x\in \ms V^{1,i}_n} \pi^{1,i}_n(x) \, G_n(x)\, \;.
\end{equation*}
Since by \eqref{14} the sequence
$\theta^{(p)}_n \sum_{x\in \ms V^{1,i}_n} \pi^{1,i}_n(x) \, G_n(x)$ is
bounded, this expression corresponds to the second term in \eqref{05}
and completes the proof of this assertion.

We have now all elements to prove the proposition.  Choose a sequence
$A_n \to 0$ such that $\beta_n /A_n \prec \theta^{(p)}_n$. With this
choice, by Lemma \ref{l02},
\begin{equation}
\label{15}
\lim_{n\to\infty} 
\sum_{i\in S_{p,j}} \frac{\beta_n}{A_n}\,
D_n(\ms V^{1,i}_n, \sqrt{f_n}) \;=\;0\;.
\end{equation}

Let $u_n$, $v_n$ be the sequences given by
$u_n = \theta^{(p)}_n\, \sum_{x\in \ms V^{p,j}_n} \mu_n(x) \, G_n(x)$,
$v_n = \theta^{(p)}_n\, \sum_{i\in S_{p,j}} \mu_n(\ms V^{1,i}_n) \,
\sum_{x\in \ms V^{1,i}_n} \pi^{1,i}_n(x) \, G_n(x)$.

By \eqref{11}, \eqref{12}, \eqref{05} and \eqref{15},
$u_n = v_n + w_n$, where $v_n$ converges to $\omega_j\, r^{(p)}(j,k)$
and $|w_n| \le \delta_n u_n + \epsilon_n$ for some sequences
$\delta_n$ and $\epsilon_n$ which vanish as $n\to\infty$. From these
estimates, it is easy to conclude that
$u_n \to \omega_j\, r^{(p)}(j,k)$, as claimed.
\end{proof}

In the proof of the previous proposition, condition (H4b) is only used to apply
Lemma \ref{l15}. We may, therefore, replace this condition by the
conclusion of Lemma \ref{l15}, that is, by condition (H6b).

\begin{lemma}
\label{l12}
Assume that conditions {\rm (H1), (H2), (H5)} and {\rm (H6b)} hold.
Fix $1\le p\le \mf q$. Let $(\mu_n:n\ge 1)$ be a sequence in
$\ms P(\Xi_n)$ such that
$\mu_n\to \sum_{j\in S_p} \omega_j \pi^{(p)}_j$ weakly for some
$\omega\in \ms P(S_p)$, and which fulfills \eqref{02}.  Then,
\begin{equation*}
\lim_{n\to\infty} \, \theta^{(p)}_n\,
\sum_{x\in \ms V^{p,j}_n} \mu_n (x)\, R^{(p)}_n(x, \ms V^{p,k}_n)
\;=\; \omega_j\, r^{(p)}(j,k) 
\end{equation*}
for all $k\neq j\in S_p$.
\end{lemma}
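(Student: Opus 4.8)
The plan is to observe that Lemma \ref{l12} is precisely Proposition \ref{l03} with hypothesis (H4b) replaced by (H6b), and that the proof of Proposition \ref{l03} invoked (H4b) at exactly one place: to apply Lemma \ref{l15}, which asserts that $\mu_n(\ms V^{1,j}_n)\to\omega_j$ whenever $\mu_n$ converges weakly to $\sum_{j\in S_1}\omega_j\,\delta_{x_j}$. Since condition (H6b) delivers the same conclusion — now under the additional assumption \eqref{02}, which is anyway in force — the entire argument goes through verbatim. Thus the proof is essentially a one-line remark: it suffices to repeat the proof of Proposition \ref{l03}, using condition (H6b) in place of Lemma \ref{l15}.

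If a more self-contained presentation is wanted, I would retrace the three main steps of Proposition \ref{l03}'s proof. First, using the decomposition of $\ms V^{p,j}_n$ into basal sets $\ms V^{1,i}_n$, $i\in S_{p,j}$, write the sum $\sum_{x\in\ms V^{p,j}_n}\mu_n(x)\,R^{(p)}_n(x,\ms V^{p,k}_n)$ as in \eqref{11}, splitting it into a ``mean'' term, where $f_n$ is replaced by its average against $\pi^{1,i}_n$, and a ``fluctuation'' term. Second, for the mean term one shows \eqref{12}: here one uses \eqref{10}, Corollary \ref{l10b} (which gives $\pi_n(\ms V^{1,i}_n)/\pi_n(\ms V^{p,j}_n)\to m_{p,j}(i)$), and — crucially — condition (H6b) to conclude $\mu_n(\ms V^{1,i}_n)\to\omega_j\,m_{p,j}(i)$; note that condition \eqref{02} is precisely the hypothesis (H6b) requires, and that the weak limit $\sum_{j\in S_p}\omega_j\,\pi^{(p)}_j$ is, by \eqref{10}, a convex combination of Dirac measures at the $x_j$'s, so (H6b) applies. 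The estimate \eqref{14}, which relies on condition (H1) and Corollary \ref{l10b}, shows the relevant prefactors are bounded, so the error in \eqref{13} vanishes after multiplication by $\theta^{(p)}_n$. Third, for the fluctuation term one proves the Young-type bound \eqref{05} and then, choosing $A_n\to 0$ with $\beta_n/A_n\prec\theta^{(p)}_n$ (possible by condition (H5)), one uses Lemma \ref{l02} to get \eqref{15}; combining with \eqref{12} gives $u_n=v_n+w_n$ with $v_n\to\omega_j\,r^{(p)}(j,k)$ and $|w_n|\le\delta_n u_n+\epsilon_n$, whence $u_n\to\omega_j\,r^{(p)}(j,k)$.

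There is no real obstacle here: the only conceptual point is to check that the hypotheses of (H6b) are met in the present setting, namely that $\mu_n$ converges weakly to a convex combination of Dirac measures at the $x_j$'s and satisfies \eqref{02}. The first holds because $\pi^{(p)}_j$ is such a combination by \eqref{10}; the second is assumed. Everything else is an unchanged transcription of the proof of Proposition \ref{l03}. I would therefore write the proof simply as: \emph{It suffices to repeat the proof of Proposition \ref{l03}, invoking condition (H6b) instead of Lemma \ref{l15} to deduce that $\mu_n(\ms V^{1,i}_n)$ converges to $\omega_j\,m_{p,j}(i)$.}
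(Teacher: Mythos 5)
Your proposal is correct and coincides with the paper's own treatment: the paper proves Lemma \ref{l12} exactly by the remark that condition (H4b) enters the proof of Proposition \ref{l03} only through Lemma \ref{l15}, whose conclusion is supplied instead by (H6b) (whose hypotheses, as you note, are met since the weak limit is a convex combination of the $\delta_{x_j}$ by \eqref{10} and \eqref{02} is assumed). Nothing further is needed.
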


We conclude this section with a crude bound for the sequence $\beta_n$
appearing in condition (H5). This estimate on $\beta_n$ is never used in the sequel, but we report it as it may be useful for future applications.

Turn the set $\ms V^{1,i}_n$ into a graph by introducing the set of
unoriented (because the process is reversible) edges
$\color{blue} \ms E = \{ (x,y) : R_n(x,y)>0\,\}$.  Denote by
$\color{blue} {\rm diam } \ms V^{1,i}_n$ the graph diameter of
$\ms V^{1,i}_n$, i.e., the maximal graph distance
between two points of $\ms V^{1,i}_n$.

\begin{lemma}
\label{l04}
For all $j\in S_p$, $i\in S_{p,j}$, $h\in L^2(\ms V^{1,i}_n)$, 
\begin{equation*}
\max_{x\in \ms V^{1,i}_n} \sum_{y\in \ms V^{1,i}_n} \pi^{1,i}_n(y) \, 
\{ \, h(y) \,-\,  h(x) \,\}^2
\;\le\; 2\, \beta_n\, D_n(\ms V^{1,i}_n , h)
\end{equation*}
where
$\beta_n = {\rm diam } \ms V^{1,i}_n \, \max_{(x,y)\in \ms E}
c^{1,i}_n (x,y)^{-1}$, with
$c^{1,i}_n (x,y) = \pi^{1,i}_n(x)\, R_n(x,y)$.
\end{lemma}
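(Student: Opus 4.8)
The plan is the standard test-path (``path lemma'') argument already alluded to before the statement. Write $\ms V := \ms V^{1,i}_n$, $\pi := \pi^{1,i}_n$, $c(x,y) := c^{1,i}_n(x,y) = \pi(x)\,R_n(x,y)$ and $D(h) := D_n(\ms V^{1,i}_n, h)$; observe that $c$ is symmetric, since $\pi_n(x)R_n(x,y) = \pi_n(y) R_n(y,x)$ by reversibility and $\pi = \pi_n(\,\cdot\,)/\pi_n(\ms V)$. First I would rewrite \eqref{eq:restricted-DF} as
\begin{equation*}
D(h) \;=\; \frac12 \sum_{x\in \ms V}\sum_{y\in \ms V\setminus\{x\}} c(x,y)\,[\,h(y)-h(x)\,]^2\;.
\end{equation*}
We may assume $\ms V$ has at least two points and is connected as a graph, the remaining cases being trivial: ${\rm diam}\,\ms V = +\infty$ (hence $\beta_n = +\infty$) if $(\ms V,\ms E)$ is disconnected, and both sides vanish if $\ms V$ is a singleton. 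Then $L := {\rm diam}\,\ms V \in [1,\infty)$, and $M := \max_{(u,v)\in\ms E} c(u,v)^{-1} \in (0,\infty)$ because $\ms E$ is finite and nonempty and $c>0$ on $\ms E$ (as $\pi_n>0$); thus $\beta_n = L\,M$.

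The core of the proof is a geodesic estimate. Fix $x, y\in\ms V$ and choose a shortest path $x = z_0, z_1, \dots, z_\ell = y$ in $(\ms V,\ms E)$, so $\ell\le L$. Applying Cauchy--Schwarz to $h(y)-h(x) = \sum_{a=0}^{\ell-1}[h(z_{a+1})-h(z_a)]$ and then inserting $c(z_a,z_{a+1})/c(z_a,z_{a+1})$ with $c(z_a,z_{a+1})^{-1}\le M$,
\begin{equation*}
[\,h(y)-h(x)\,]^2 \;\le\; \ell \sum_{a=0}^{\ell-1}[\,h(z_{a+1})-h(z_a)\,]^2 \;\le\; \ell\,M \sum_{a=0}^{\ell-1} c(z_a,z_{a+1})\,[\,h(z_{a+1})-h(z_a)\,]^2 \;.
\end{equation*}
Since each $(z_a,z_{a+1})$ is an ordered pair of distinct elements of $\ms V$ and all summands are nonnegative, the last sum is bounded by $\sum_{x'\in\ms V}\sum_{y'\in\ms V\setminus\{x'\}} c(x',y')\,[\,h(y')-h(x')\,]^2 = 2D(h)$; with $\ell\le L$ this gives $[\,h(y)-h(x)\,]^2 \le 2\beta_n D(h)$ for all $x,y\in\ms V$.

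Finally I would fix $x$, multiply by $\pi(y)$ and sum over $y\in\ms V$; using $\pi(\ms V)=1$ this yields $\sum_{y\in\ms V}\pi(y)[\,h(y)-h(x)\,]^2 \le 2\beta_n D(h)$, and the right-hand side being independent of $x$, taking the maximum over $x$ proves the lemma. There is no serious obstacle: the estimate is elementary and the only thing to keep in mind is that replacing the sum over the edges of the geodesic by the full double sum over $\ms V\times\ms V$ --- instead of using that a geodesic has distinct edges, which would already give $\beta_n$ --- is what accounts for the factor $2$.
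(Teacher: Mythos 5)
Your proof is correct and follows essentially the same route as the paper's: a geodesic between points of $\ms V^{1,i}_n$, Cauchy--Schwarz along the path, and the bound $c^{1,i}_n(z_a,z_{a+1})^{-1}\le M$ on each edge. The only (immaterial) difference is bookkeeping: you derive a uniform pointwise bound on $[h(y)-h(x)]^2$ by enlarging the geodesic edge sum to the full ordered double sum $2D_n(\ms V^{1,i}_n,h)$ and then average in $\pi^{1,i}_n(y)$, whereas the paper keeps the $\pi^{1,i}_n(y)$-weighted sum, interchanges the order of summation, and bounds the resulting weight of each edge by $1$.
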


\begin{proof}
Fix $x\in \ms V^{1,i}_n$. For $y\in \ms V^{1,i}_n$, let
$\gamma_y = (x=z_0, \dots, z_m =y)$ be a geodesic from $x$ to $y$. By
Schwarz inequality, and since the length of the path is at most
$ {\rm diam } \ms V^{1,i}_n$,
\begin{equation*}
\begin{aligned}
& \sum_{y\in \ms V^{1,i}_n} \pi^{1,i}_n(y) \,
\{ \, h(y) \,-\,  h(x) \,\}^2 \\
&\quad \;\le\; {\rm diam } \ms V^{1,i}_n 
\sum_{y\in \ms V^{1,i}_n} \pi^{1,i}_n(y) \,
\sum_{(z_i, z_{i+1})\in \gamma_y}
\{ \, h(z_{i+1}) \,-\,  h(z_i) \,\}^2\;.
\end{aligned}
\end{equation*}
Inverting the order of summation, this sum becomes
\begin{equation*}
{\rm diam } \ms V^{1,i}_n  \sum_{(w,z) \in  \ms E}
\big\{ \, h(z) \,-\,  h(w) \,\big\}^2 \sum_{y \in \ms V_n^{1,i}} \pi^{1,i}_n(y) \;.
\end{equation*}
In this equation, the second sum is performed over all
$y\in \ms V^{1,i}_n$ whose path $\gamma_y$ passes through the edge
$(w,z)$. Since the sum over $y$ is bounded by $1$, this expression is
less than or equal to
\begin{equation*}
{\rm diam } \ms V^{1,i}_n\, \max_{(x,y)\in \ms E} \frac{1}{c^{1,i}_n (x,y)}\,
\, \sum_{(w,z) \in  \ms E} \pi^{1,i}_n(z)\, R_n(z,w)\,
\big\{ \, h(z) \,-\,  h(w) \,\big\}^2  \;,
\end{equation*}
as claimed.
\end{proof}

\section{Proof of Theorems \ref{mt1} and \ref{mt2}}
\label{sec5}

The proof of Theorem \ref{mt1} relies on three lemmata.  Recall the
definition of the measures $\pi^{(p)}_j$ introduced in
\eqref{09}. Next result is \cite[Lemma 5.1]{l-mld}. We present its
proof for the sake of completeness.

\begin{lemma}
\label{l39}
Assume that conditions {\rm (H0) -- (H2)} hold.
Fix $0\le p < \mf q$. Then,
\begin{equation*}
\ms I^{(p+1)} (\mu) \;<\; \infty \quad \text{if and only if}\quad
\ms I^{(p)} (\mu) \;=\; 0\;.
\end{equation*}
\end{lemma}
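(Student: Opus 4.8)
The plan is to base the proof on two classical properties of the Donsker--Varadhan level-two rate functional $\bb I^{(p)}$ of the finite chain $\bb X^{(p)}_t$. First, $\bb I^{(p)}(\omega)<\infty$ for \emph{every} $\omega\in\ms P(S_p)$: since $u(k)/u(j)>0$ for any $u\colon S_p\to(0,\infty)$, one has $-(\bb L^{(p)}u)(j)/u(j)=\sum_{k}r^{(p)}(j,k)\,\big(1-u(k)/u(j)\big)\le\sum_k r^{(p)}(j,k)$, whence $\bb I^{(p)}(\omega)\le\sum_{j\in S_p}\omega_j\sum_k r^{(p)}(j,k)<\infty$. Second, the $0$-level set of $\bb I^{(p)}$ coincides with the set of invariant probability measures of $\bb X^{(p)}_t$: writing $u=e^g$ and using $1-e^{g(k)-g(j)}\le g(j)-g(k)$ gives $\bb I^{(p)}(\omega)\le\sup_g\{-\langle\omega,\bb L^{(p)}g\rangle\}$, which equals $0$ when $\omega$ is invariant (and is $\ge 0$ always, by $u\equiv 1$); conversely, testing with $u=1+\varepsilon g$ and sending $\varepsilon\to 0^{\pm}$ shows that $\bb I^{(p)}(\omega)=0$ forces $\langle\omega,\bb L^{(p)}g\rangle=0$ for all $g$, i.e. $\omega$ invariant. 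Combining this with the description of the invariant measures of a finite chain and with (H2) --- so that the closed irreducible classes of $\bb X^{(p)}_t$ are exactly $\mf R^{(p)}_1,\dots,\mf R^{(p)}_{\mf n_{p+1}}$ --- identifies the $0$-level set of $\bb I^{(p)}$ with the simplex $\{\sum_{m\in S_{p+1}}\nu_m\,M^{(p)}_m:\nu\in\ms P(S_{p+1})\}$, each $M^{(p)}_m$ extended by $0$ off $\mf R^{(p)}_m$.

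The base case $p=0$ is then immediate: by (H0), $\ms I^{(0)}(\mu)=0$ if and only if $\mu=\sum_{j\in S_1}\omega_j\,\delta_{x_j}$ for some $\omega\in\ms P(S_1)$; since $\pi^{(1)}_j=\delta_{x_j}$ and, by the first property, $\bb I^{(1)}(\omega)<\infty$ for every $\omega$, this happens if and only if $\ms I^{(1)}(\mu)<\infty$.

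For $1\le p<\mf q$ I would first record, using \eqref{10}, that the measures $\pi^{(p)}_k$, $k\in S_p$, have pairwise disjoint supports inside $\ms M$ (because the wells $\ms V^{p,k}_n$ are disjoint, so are the basal index sets $S_{p,k}$), hence the decomposition $\mu=\sum_{k\in S_p}\omega_k\,\pi^{(p)}_k$ is unique when it exists and \eqref{o-83b} is unambiguous. For the forward implication I would assume $\ms I^{(p+1)}(\mu)<\infty$, i.e. $\mu=\sum_{m\in S_{p+1}}\nu_m\,\pi^{(p+1)}_m$ with $\nu\in\ms P(S_{p+1})$, and substitute \eqref{09} to rewrite $\mu=\sum_{k\in S_p}\omega_k\,\pi^{(p)}_k$ with $\omega_k=\nu_m\,M^{(p)}_m(k)$ when $k\in\mf R^{(p)}_m$ and $\omega_k=0$ when $k\in\mf T_p$; then $\omega=\sum_m\nu_m\,M^{(p)}_m\in\ms P(S_p)$ lies in the $0$-level set of $\bb I^{(p)}$, so $\ms I^{(p)}(\mu)=\bb I^{(p)}(\omega)=0$. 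Conversely, if $\ms I^{(p)}(\mu)=0$ then $\mu=\sum_{k\in S_p}\omega_k\,\pi^{(p)}_k$ with $\bb I^{(p)}(\omega)=0$, so $\omega=\sum_{m\in S_{p+1}}\nu_m\,M^{(p)}_m$ for some $\nu\in\ms P(S_{p+1})$; plugging this in and collapsing the inner sum by \eqref{09} gives $\mu=\sum_{m\in S_{p+1}}\nu_m\,\pi^{(p+1)}_m$, hence $\ms I^{(p+1)}(\mu)=\bb I^{(p+1)}(\nu)<\infty$ by the first property.

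The step I expect to require the most care is the second property above --- identifying the $0$-level set of $\bb I^{(p)}$ with the invariant simplex --- precisely because $\bb X^{(p)}_t$ need not be irreducible, carrying the transient states $\mf T_p$; once that is in place, the argument is bookkeeping with \eqref{09} and the uniqueness of the decomposition, the key point being that the transient weights $\omega_k$, $k\in\mf T_p$, vanish automatically whether $\mu$ comes from level $p+1$ or $\omega$ from the $0$-level set of $\bb I^{(p)}$, which is what makes the two descriptions of $\mu$ coincide.
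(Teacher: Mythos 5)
Your proposal is correct and follows essentially the same route as the paper: both directions reduce to the identification of the $0$-level set of $\bb I^{(p)}$ with the invariant simplex spanned by the $M^{(p)}_m$, followed by the bookkeeping with \eqref{09}. The only difference is that you supply a self-contained verification of the two properties of $\bb I^{(p)}$ (finiteness everywhere and the characterization of its zero set via tilting by $u=e^g$ and $u=1+\varepsilon g$) where the paper simply cites Lemma A.8 of \cite{l-mld}, and you make explicit the uniqueness of the decomposition $\mu=\sum_k\omega_k\,\pi^{(p)}_k$, which the paper uses implicitly in \eqref{o-83b}.
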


\begin{proof}
Suppose that $\ms I^{(p)} (\mu) = 0$. Then, by condition (H0) for
$p=0$ and \eqref{o-83b} for $1\le p<\mf q$,
$\mu = \sum_{j\in S_p} \omega_j\, \pi^{(p)}_j$ for some
$\omega\in \ms P(S_p)$ and $\bb I^{(p)}(\omega)=0$. By the definition
\eqref{40} of $\bb I^{(p)}$ and \cite[Lemma A.8]{l-mld}, $\omega$ is a
stationary state of the Markov chain $\bb X^{(p)}_t$, that is,
$\omega$ is a convex combination of the measures $M^{(p)}_m$,
$m\in S_{p+1}$ introduced above \eqref{09}:
\begin{equation*}
\omega(j) \;=\; \sum_{m\in S_{p+1}} \vartheta (m)\, M^{(p)}_m(j)\;,
\quad j\,\in\,S_p\;,
\end{equation*}
for some $\vartheta \in \ms P(S_{p+1})$.  Inserting this expression in
the formula for $\mu$ and changing the order of summation yields that
\begin{equation*}
\mu \;=\; \sum_{m\in S_{p+1}} \vartheta (m) \, \sum_{j\in S_p} M^{(p)}_m(j)
\, \pi^{(p)}_j \;=\;
\sum_{m\in S_{p+1}} \vartheta (m) \, \pi^{(p+1)}_m\;,
\end{equation*}
where we used identity \eqref{09} in the last step. This proves the
first assertion of the lemma because
$\bb I^{(p+1)} (\vartheta) <\infty$ for all
$\vartheta\in \ms P(S_{p+1})$. We turn to the converse.

Suppose that $\ms I^{(p+1)} (\mu) < \infty$. In this case, by
\eqref{o-83b},
$\mu = \sum_{m\in S_{p+1}} \vartheta (m) \, \pi^{(p+1)}_m$ for some
$\vartheta \in \ms P(S_{p+1})$. By \eqref{09}, this identity can be
rewritten as
\begin{equation*}
\mu(\,\cdot\,) \;=\; \sum_{j\in S_p} \Big( \, \sum_{m\in S_{p+1}}
\vartheta (m) \, M^{(p)}_m(j)\,\Big)
\, \pi^{(p)}_j (\,\cdot\,) \;.
\end{equation*}
Therefore, by definition of $\ms I^{(p)}$,
$\ms I^{(p)}(\mu) = \bb I^{(p)}(\omega)$, where
$\omega(j) = \sum_{m\in S_{p+1}} \vartheta (m) \, M^{(p)}_m(j)$.  As
the measures $M^{(p)}_m$ are stationary for the chain $\bb X^{(p)}_t$,
so is $\omega$. Thus, by \cite[Lemma A.8]{l-mld},
$\bb I^{(p)}(\omega)=0$, as claimed.
\end{proof}

Next result is the \emph{$\Gamma$-liminf} part of Theorem \ref{mt1} for
measures which can be expressed as convex combinations of the measures
$\pi^{(p)}_j$.

\begin{lemma}
\label{l06}
Assume that conditions {\rm (H1), (H2), (H4b) and (H5)} are in force.
Fix $1\le p\le \mf q$ and a sequence $\mu_n \in \ms P(\Xi_n)$ of
probability measures converging to
$\mu = \sum_{j\in S_p} \omega_j\, \pi^{(p)}_j$ for some
$\omega \in \ms P(S_p)$. Then,
\begin{equation*}
\liminf_{n\to\infty} \theta^{(p)}_n\, \ms I_n(\mu_n)
\;\ge\; \ms I^{(p)}(\mu)\;.   
\end{equation*}
\end{lemma}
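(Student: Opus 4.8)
The plan is to bound $\theta^{(p)}_n\,\ms I_n(\mu_n)$ from below by testing the Donsker--Varadhan formula \eqref{f4} against a well-chosen one-parameter family of functions, and to evaluate the resulting limit through Proposition~\ref{l03}. If $\liminf_{n\to\infty}\theta^{(p)}_n\,\ms I_n(\mu_n)=+\infty$ the assertion is trivial; otherwise the sequence $(\mu_n)$ fulfills condition \eqref{02}, so Proposition~\ref{l03} applies to it.

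Fix a strictly positive function $\mb h\colon S_p\to(0,\infty)$, and let $\bar u_n\colon\ms V^{(p)}_n\to(0,\infty)$ be the lift of $\mb h$ to the wells, $\bar u_n\equiv\mb h(j)$ on $\ms V^{p,j}_n$. Extend $\bar u_n$ to a function $u_n\colon\Xi_n\to(0,\infty)$ that is harmonic on $\Delta^{(p)}_n$, i.e. $(\ms L_n u_n)(x)=0$ for $x\in\Delta^{(p)}_n$ and $u_n=\bar u_n$ on $\ms V^{(p)}_n$; by irreducibility and the maximum principle $u_n>0$, so it is admissible in \eqref{f4}. By the characterization of the trace generator through harmonic extensions (see \cite{bl2}), $(\ms L_n u_n)(x)=(\mf T_{\ms V^{(p)}_n}\ms L_n\,\bar u_n)(x)=\sum_{k\neq j}R^{(p)}_n(x,\ms V^{p,k}_n)\,[\mb h(k)-\mb h(j)]$ for $x\in\ms V^{p,j}_n$, while $\ms L_n u_n$ vanishes on $\Delta^{(p)}_n$. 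Hence \eqref{f4} yields
\begin{equation*}
\begin{aligned}
\theta^{(p)}_n\,\ms I_n(\mu_n)\;&\ge\;-\,\theta^{(p)}_n\sum_{x\in\Xi_n}\mu_n(x)\,\frac{(\ms L_n u_n)(x)}{u_n(x)}\\
&=\;-\sum_{j\in S_p}\frac{1}{\mb h(j)}\sum_{k\neq j}\big[\mb h(k)-\mb h(j)\big]\;\theta^{(p)}_n\sum_{x\in\ms V^{p,j}_n}\mu_n(x)\,R^{(p)}_n(x,\ms V^{p,k}_n)\;,
\end{aligned}
\end{equation*}
and, by Proposition~\ref{l03}, the right-hand side converges as $n\to\infty$ to $-\sum_{j\in S_p}\omega_j\,(\bb L^{(p)}\mb h)(j)/\mb h(j)$, since $(\bb L^{(p)}\mb h)(j)=\sum_{k\neq j}r^{(p)}(j,k)\,[\mb h(k)-\mb h(j)]$.

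Taking the liminf on the left and then the supremum over $\mb h\colon S_p\to(0,\infty)$ on the right gives
\begin{equation*}
\liminf_{n\to\infty}\theta^{(p)}_n\,\ms I_n(\mu_n)\;\ge\;\sup_{\mb h}\Big(-\sum_{j\in S_p}\omega_j\,\frac{(\bb L^{(p)}\mb h)(j)}{\mb h(j)}\Big)\;=\;\bb I^{(p)}(\omega)\;,
\end{equation*}
by \eqref{40}; since $\mu=\sum_{j\in S_p}\omega_j\pi^{(p)}_j$, definition \eqref{o-83b} identifies $\bb I^{(p)}(\omega)$ with $\ms I^{(p)}(\mu)$, which is the claim.

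The only non-elementary input is Proposition~\ref{l03}, invoked here as a black box: the conditions (H4b) and (H5) enter the argument solely through it, where they are used to compare $\mu_n$-averages of the trace rates with the limiting rates $r^{(p)}(j,k)$ by controlling the oscillation of $d\mu_n/d\pi_n$ inside each basal set $\ms V^{1,i}_n$. The one step that requires a little care is the identity $(\ms L_n u_n)|_{\ms V^{(p)}_n}=\mf T_{\ms V^{(p)}_n}\ms L_n\,\bar u_n$ for the harmonic extension $u_n$, which is the defining feature of the trace process and is what lets the argument be run directly on $\ms I_n$; alternatively, one may first pass to the trace functional via the bound $\mu_n(\ms V^{(p)}_n)\,\ms I^{(p)}_n(\nu_n)\le\ms I_n(\mu_n)$ (the Dirichlet principle for traces), where $\nu_n$ denotes $\mu_n$ conditioned to $\ms V^{(p)}_n$ and $\mu_n(\ms V^{(p)}_n)\to1$ follows from Lemma~\ref{l15} together with \eqref{10}, and then lower-bound $\ms I^{(p)}_n(\nu_n)$ in the same way.
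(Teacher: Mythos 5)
Your proposal is correct and follows essentially the same route as the paper: test the Donsker--Varadhan formula against the harmonic extension of the step function built from $\mb h$, identify $\ms L_n u_n$ on $\ms V^{(p)}_n$ with the trace generator applied to the step function (the paper cites \cite[Lemma A.1]{bgl-lm} for this), pass to the limit via Proposition~\ref{l03}, and optimize over $\mb h$. The reduction to sequences satisfying \eqref{02} and the use of (H4b), (H5) only through Proposition~\ref{l03} also match the paper's argument.
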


\begin{proof}
Consider a sequence $\mu_n \to \mu$. We may assume, without loss of
generality, that \eqref{02} holds.  Fix a function
$\mb h\colon S_p \to (0, \infty)$, and let
$h_n: \ms V^{(p)}_n \to (0,\infty)$ be given by
$h_n = \sum_{j\in S_p} \mb h (j) \, \chi_{\ms V^{p,j}_n}$. Let
$u_n\colon \Xi_n\to \bb R$ be the solution of the Poisson equation
\begin{equation}
\label{07}
\left\{
\begin{aligned}
& \ms L_n u_n \,=\, 0\;, \quad \Xi_n\setminus \ms V^{(p)}_n\;, \\
& u_n \,=\, h_n\;, \quad \ms V^{(p)}_n\;.
\end{aligned}
\right.
\end{equation}
By the standard stochastic representation of the solution of the
Poisson equation $u_n(x)= \mb E_x^{(n)}\big[\,h_n(X_{\tau_n}^{(n)})\,\big]$, with $\tau_n\ge 0$ being the first hitting time of $\ms V_n^{(p)}$, it is clear that $u_n(x) \in (0,\infty)$ for all
$x\in \Xi_n$.

By definition of $\ms I_n$,
\begin{equation*}
\ms I_n(\mu_n) \; \ge \; -\, \int_{\Xi_n}
\frac{\ms L_n u_n}{u_n}\, d\mu_n\;.
\end{equation*}
Since $u_n$ is harmonic on $\Xi_n \setminus \ms V^{(p)}_n$ and
$u_n = h_n$ on $\ms V^{(p)}_n$, by \cite[Lemma A.1]{bgl-lm}, the
right-hand side is equal to
\begin{equation*}
-\, \int_{\ms V^{(p)}_n} \frac{\ms L_n u_n}{u_n}\, d\mu_n
\;=\; -\, \int_{\ms V^{(p)}_n} \frac{\ms L_n u_n}{h_n}\, d\mu_n
\;=\; -\, \int_{\ms V^{(p)}_n}
\frac{(\mf T_{\ms V^{(p)}_n} \ms L_n)\, h_n}{h_n}\, d\mu_n \;,
\end{equation*}
where $\mf T_{\ms V^{(p)}_n} \ms L_n$ has been introduced in
\eqref{eq:2} and is the generator of the trace process
$Y^{\ms V^{(p)}_n}_t$.  Since $h_n$ is constant on each set
$\ms V^{p,j}_n$ (and equal to $\mb h(j)$), the last integral is equal
to
\begin{equation*}
-\, \sum_{j,k\in S_p} \frac{[\, \mb h(k) - \mb h(j)\,]}{\mb h(j)}\,
\sum_{x\in \ms V^{p,j}_n} \mu_n (x)\,
R^{(p)}_n(x, \ms V^{p,k}_n)\;.
\end{equation*}
By Proposition \ref{l03}, this expression multiplied by
$\theta_n^{(p)}$ converges to
\begin{equation*}
-\, \sum_{j\in S_p} \omega_j \, \frac{1}{\mb h(j)}\,
\sum_{k\in S_p} r^{(p)}(j,k) \, [\, \mb h(k) - \mb h(j)\,]
\;=\; -\, \sum_{j\in S_p} \omega_j \, \frac{\bb L^{(p)} \mb h}{\mb h} \;.
\end{equation*}

Summarising, we proved that
\begin{equation*}
\liminf_{n\to\infty} \theta^{(p)}_n\, \ms I_n(\mu_n) \; \ge \;
\sup_{\mb h} \,  -\, \sum_{j\in S_p} \omega_j \, \frac{\bb L^{(p)} \mb
h}{\mb h} \;,
\end{equation*}
where the supremum is carried over all functions
$\mb h: S_p \to (0,\infty)$. By \eqref{40}, \eqref{o-83b}, the
right-hand side is precisely $\ms I^{(p)}(\mu)$, which completes the
proof of the \emph{$\Gamma$-liminf}.
\end{proof}

As in Lemma \ref{l12}, in the proof the previous lemma, condition
(H4b) is only used to apply Lemma \ref{l15}. We may, therefore,
replace this condition by the conclusion of Lemma \ref{l15}, that is,
by condition (H6b). This is the assertion of the next result.

\begin{lemma}
\label{l13}
Assume that conditions {\rm (H1), (H2), (H5)} and {\rm (H6b)} are in
force.  Fix $1\le p\le \mf q$ and a sequence $\mu_n \in \ms P(\Xi_n)$
of probability measures converging to
$\mu = \sum_{j\in S_p} \omega_j\, \pi^{(p)}_j$ for some
$\omega \in \ms P(S_p)$. Then,
\begin{equation*}
\liminf_{n\to\infty} \theta^{(p)}_n\, \ms I_n(\mu_n)
\;\ge\; \ms I^{(p)}(\mu)\;.   
\end{equation*}
\end{lemma}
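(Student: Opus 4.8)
The plan is to repeat the proof of Lemma~\ref{l06} essentially verbatim, the sole modification being that Lemma~\ref{l12} takes over the role played there by Proposition~\ref{l03}. Indeed, inspecting the argument for Lemma~\ref{l06}, condition~(H4b) is invoked there only through Lemma~\ref{l15}, and the conclusion of Lemma~\ref{l15} --- namely $\lim_{n\to\infty}\mu_n(\ms V^{1,j}_n)=\omega_j$ for every $j\in S_1$, valid for sequences fulfilling \eqref{02} --- is exactly what is postulated in~(H6b). Since Lemma~\ref{l12} already furnishes, under the hypotheses (H1), (H2), (H5) and~(H6b), the same limit
\[
\lim_{n\to\infty}\theta^{(p)}_n\sum_{x\in\ms V^{p,j}_n}\mu_n(x)\,R^{(p)}_n(x,\ms V^{p,k}_n)\;=\;\omega_j\,r^{(p)}(j,k),\qquad k\neq j\in S_p,
\]
that Proposition~\ref{l03} provides under (H1), (H2), (H4b) and~(H5), the substitution is transparent.

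Concretely, first I would reduce to the case in which \eqref{02} holds, for otherwise $\liminf_n\theta^{(p)}_n\ms I_n(\mu_n)=\infty$ and there is nothing to prove. Then, fixing $\mb h\colon S_p\to(0,\infty)$, set $h_n=\sum_{j\in S_p}\mb h(j)\,\chi_{\ms V^{p,j}_n}$ on $\ms V^{(p)}_n$ and let $u_n$ solve the Poisson problem \eqref{07} with boundary datum $h_n$; the stochastic representation of its solution gives $u_n>0$ on $\Xi_n$. From the variational formula \eqref{f4},
\[
\ms I_n(\mu_n)\;\ge\;-\int_{\Xi_n}\frac{\ms L_n u_n}{u_n}\,d\mu_n,
\]
and since $u_n$ is harmonic on $\Xi_n\setminus\ms V^{(p)}_n$ and equals $h_n$ on $\ms V^{(p)}_n$, \cite[Lemma~A.1]{bgl-lm} rewrites the right-hand side as $-\int_{\ms V^{(p)}_n}[(\mf T_{\ms V^{(p)}_n}\ms L_n)h_n/h_n]\,d\mu_n$; because $h_n$ is constant on each well this equals
\[
-\sum_{j,k\in S_p}\frac{\mb h(k)-\mb h(j)}{\mb h(j)}\sum_{x\in\ms V^{p,j}_n}\mu_n(x)\,R^{(p)}_n(x,\ms V^{p,k}_n).
\]
Multiplying by $\theta^{(p)}_n$ and applying Lemma~\ref{l12} (in place of Proposition~\ref{l03}), each term converges and the limit is $-\sum_{j\in S_p}\omega_j\,(\bb L^{(p)}\mb h)(j)/\mb h(j)$; taking the supremum over $\mb h\colon S_p\to(0,\infty)$ and recalling \eqref{40} and \eqref{o-83b} yields $\liminf_n\theta^{(p)}_n\ms I_n(\mu_n)\ge\bb I^{(p)}(\omega)=\ms I^{(p)}(\mu)$.

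Since every step of the argument for Lemma~\ref{l06} other than the appeal to Proposition~\ref{l03} is insensitive to whether~(H4b) or~(H6b) is in force, I do not anticipate any genuine obstacle: the only point to check is that Lemma~\ref{l12} is applicable, and its hypotheses (H1), (H2), (H5), (H6b) are exactly what we assume here, together with the weak convergence $\mu_n\to\sum_{j\in S_p}\omega_j\,\pi^{(p)}_j$ and --- after the reduction carried out in the first step --- the bound \eqref{02}.
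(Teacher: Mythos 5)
Your proposal is correct and coincides with the paper's own argument: the paper likewise observes that (H4b) enters the proof of Lemma \ref{l06} only through Lemma \ref{l15}, whose conclusion is exactly (H6b), so the proof carries over verbatim with Lemma \ref{l12} replacing Proposition \ref{l03}. The intermediate steps you spell out (reduction to \eqref{02}, the harmonic extension via \eqref{07}, and the identification of the limit through \eqref{40} and \eqref{o-83b}) are precisely those of the paper.
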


For a function $u\colon \Xi_n \to (0,\infty)$, denote by $\mf M_{u}\,
\ms L_n$ the generator $\ms L_n$ tilted by the function $u$:
\begin{equation}
\label{21}
{\color{blue} [\,(\mf M_u\, \ms L_n) \, f\, ]\, (x)} \;=\;
\sum_{y\in \Xi_n}
\frac{1}{u(x)} \, R_n(x,y) \, u(y)  \,  \big[\, f(y) \,-\, f(x)
\,\big] 
\end{equation}
for $f\colon \Xi_n \to \bb R$.  Next result is the \emph{$\Gamma$-limsup}
part of Theorem \ref{mt1} for measures which can be expressed as
positive convex combinations of the measures $\pi^{(p)}_j$.

\begin{lemma}
\label{l07}
Assume that conditions {\rm (H1), (H2) and (H4)} hold.  Fix
$1\le p \le \mf q$ and assume that $\ms I^{(p)}$ is a \emph{$\Gamma$-liminf}
for the sequence $\theta^{(p)}_n\, \ms I_n$. Let
$\mu = \sum_{j\in S_p} \omega_j\, \pi^{(p)}_j$
for some $\omega \in \ms P(S_p)$ such that $\omega_j>0$ for all
$j\in S_p$. Then, there exists a sequence $\nu_n \in \ms P(\Xi_n)$
such that $\nu_n \to \mu$ and 
\begin{equation*}
\limsup_{n\to\infty} \theta^{(p)}_n\, \ms I_n(\nu_n) \;\le\; \ms
I^{(p)}(\mu)\;.  
\end{equation*}
\end{lemma}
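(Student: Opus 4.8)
The plan is to lift the recovery sequence furnished by Proposition~\ref{l16} for the trace process to a recovery sequence on $\Xi_n$ via harmonic extension, and then to show that this extension does not leak mass outside the wells.

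\medskip\noindent\textbf{Step 1: construction.} Let $\mu_n := \sum_{j\in S_p}\omega_j\,\pi^{p,j}_n\in\ms P(\ms V^{(p)}_n)$ be the measure of Proposition~\ref{l16}, and set $f_n := d\mu_n/d\pi^{(p)}_n$ on $\ms V^{(p)}_n$, where $\pi^{(p)}_n(x)=\pi_n(x)/\pi_n(\ms V^{(p)}_n)$; thus $\sqrt{f_n}$ equals $\sqrt{\omega_j\,\pi_n(\ms V^{(p)}_n)/\pi_n(\ms V^{p,j}_n)}$ on $\ms V^{p,j}_n$. Let $\psi_n\colon\Xi_n\to(0,\infty)$ be the harmonic extension of $\sqrt{f_n}$, i.e.\ the solution of $\ms L_n\psi_n=0$ on $\Delta^{(p)}_n:=\Xi_n\setminus\ms V^{(p)}_n$ with $\psi_n=\sqrt{f_n}$ on $\ms V^{(p)}_n$; positivity is clear from the representation $\psi_n(x)=\mb E^n_x[\sqrt{f_n}(X^{(n)}_{H_{\ms V^{(p)}_n}})]$. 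Put $Z_n:=\sum_{x\in\Xi_n}\psi_n(x)^2\pi_n(x)$ and $\nu_n:=Z_n^{-1}\psi_n^2\,\pi_n\in\ms P(\Xi_n)$.

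\medskip\noindent\textbf{Step 2: energy identity.} By \eqref{f6}, $\ms I_n(\nu_n)=Z_n^{-1}\langle\psi_n,(-\ms L_n)\psi_n\rangle_{\pi_n}$. Since $\psi_n$ is $\ms L_n$-harmonic on $\Delta^{(p)}_n$, by \cite[Lemma A.1]{bgl-lm} (the same identity used in the proof of Lemma~\ref{l06}) one has $(-\ms L_n\psi_n)(x)=(-\,\mf T_{\ms V^{(p)}_n}\ms L_n)\sqrt{f_n}\,(x)$ for $x\in\ms V^{(p)}_n$, whence, using \eqref{f6} for the (reversible) trace process $Y^{\ms V^{(p)}_n}_t$,
\begin{equation*}
\langle\psi_n,(-\ms L_n)\psi_n\rangle_{\pi_n}
\;=\;\sum_{x\in\ms V^{(p)}_n}\pi_n(x)\,\sqrt{f_n}(x)\,\big[(-\,\mf T_{\ms V^{(p)}_n}\ms L_n)\sqrt{f_n}\big](x)
\;=\;\pi_n(\ms V^{(p)}_n)\,\ms I^{(p)}_n(\mu_n)\;.
\end{equation*}
Therefore $\theta^{(p)}_n\,\ms I_n(\nu_n)=\big(\pi_n(\ms V^{(p)}_n)/Z_n\big)\,\theta^{(p)}_n\,\ms I^{(p)}_n(\mu_n)$. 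As $Z_n\ge\sum_{x\in\ms V^{(p)}_n}f_n(x)\pi_n(x)=\pi_n(\ms V^{(p)}_n)$, the prefactor is at most $1$, and Proposition~\ref{l16} gives $\limsup_n\theta^{(p)}_n\ms I_n(\nu_n)\le\limsup_n\theta^{(p)}_n\ms I^{(p)}_n(\mu_n)\le\ms I^{(p)}(\mu)$.

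\medskip\noindent\textbf{Step 3: weak convergence.} It remains to prove $\nu_n\to\mu$. On $\ms V^{(p)}_n$ one has $\nu_n=Z_n^{-1}\mu_n$ and $\mu_n\to\mu$ by Lemma~\ref{l11}; since $\pi_n(\ms V^{(p)}_n)\to1$ by Lemma~\ref{l17}, it is enough to show $Z_n\to1$, equivalently $\sum_{x\in\Delta^{(p)}_n}\psi_n(x)^2\pi_n(x)\to0$. By the stochastic representation, Jensen's inequality and reversibility,
\begin{equation*}
\sum_{x\in\Delta^{(p)}_n}\psi_n(x)^2\pi_n(x)
\;\le\;\sum_{j\in S_p}\frac{\omega_j\,\pi_n(\ms V^{(p)}_n)}{\pi_n(\ms V^{p,j}_n)}\;\sum_{x\in\Delta^{(p)}_n}\pi_n(x)\,\mb P^n_x\big[X^{(n)}_{H_{\ms V^{(p)}_n}}\in\ms V^{p,j}_n\big]\;,
\end{equation*}
so the matter reduces to showing that the harmonic-measure mass of $\ms V^{p,j}_n$ carried over from $\Delta^{(p)}_n$ is $o(\pi_n(\ms V^{p,j}_n))$, uniformly in $j$; this is where condition (H4), together with Lemma~\ref{l09} and Corollaries~\ref{l10}--\ref{l10b}, enters. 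When this direct estimate is awkward one may instead argue by compactness of $\ms P(\Xi)$: along a subsequence $\nu_n\to\mu'$ and $Z_n\to Z_\infty\ge1$; the $\Gamma$-liminf hypothesis gives $\ms I^{(p)}(\mu')\le\liminf_n\theta^{(p)}_n\ms I_n(\nu_n)\le\ms I^{(p)}(\mu)<\infty$, hence by \eqref{o-83b} $\mu'=\sum_j\tilde\omega_j\pi^{(p)}_j$; finally, comparing the masses $\nu_n(\ms V^{1,i}_n)=Z_n^{-1}\mu_n(\ms V^{1,i}_n)$ (which, by Corollary~\ref{l10b}, converge to $Z_\infty^{-1}$ times the weight of $\delta_{x_i}$ in $\mu$) with those of $\mu'$ forces $\tilde\omega=\omega$ and $Z_\infty=1$, so $\mu'=\mu$ and the whole sequence converges. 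Combining with Step~2 completes the proof.

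\medskip\noindent\textbf{Main obstacle.} The delicate point is Step~3: for a transient index $j\in\mf T_p$ the well $\ms V^{p,j}_n$ has $\pi_n(\ms V^{p,j}_n)\to0$ (Lemma~\ref{l17}) while $\sqrt{f_n}$ blows up there, so the harmonic extension $\psi_n$ is large on and near such ``shallow'' wells; controlling the residual mass $\nu_n(\Delta^{(p)}_n)$ requires a genuine comparison of equilibrium measure against harmonic measure across the separating valleys — exactly the information packaged in (H4) (and, in Theorem~\ref{mt2}, in (H6a)), and the reason the hypothesis that $\ms I^{(p)}$ is already a $\Gamma$-liminf is available and used.
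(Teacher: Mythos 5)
Your proof is correct, but the construction of the recovery sequence differs from the paper's proof of Lemma \ref{l07} and instead follows the route the paper reserves for Lemma \ref{l14}. The paper takes the \emph{optimal} tilting function $h_n$ for $\ms I^{(p)}_n(\mu_n)$ from \cite[Lemma A.3]{l-mld}, extends it harmonically to $u_n$, and defines $\nu_n$ as the stationary state of the tilted generator $\mf M_{u_n}\ms L_n$; the bound $\ms I_n(\nu_n)\le\ms I^{(p)}_n(\mu_n)$ then comes from \cite[Proposition C.1]{l-mld}, and the identification $\mu_n=\nu_n(\,\cdot\,|\,\ms V^{(p)}_n)$ from the trace/tilt commutation \cite[Lemma C.4]{l-mld} — in particular $\nu_n$ is automatically a probability measure. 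You instead extend $\sqrt{f_n}$ harmonically and normalize, which replaces that machinery by the elementary Dirichlet-form identity $\langle\psi_n,(-\ms L_n)\psi_n\rangle_{\pi_n}=\pi_n(\ms V^{(p)}_n)\,\ms I^{(p)}_n(\mu_n)$ and the trivial bound $Z_n\ge\pi_n(\ms V^{(p)}_n)$; the price is that you must prove $Z_n\to 1$, which is nontrivial precisely because $f_n$ blows up on the transient wells (this is why the paper invokes (H6a) when it uses this construction in Lemma \ref{l14}). Your fallback compactness argument closes this gap correctly and is essentially the paper's own contradiction argument in Lemma \ref{l07}: the $\Gamma$-liminf hypothesis forces any subsequential limit $\mu'$ to satisfy $\ms I^{(p)}(\mu')<\infty$, hence to be supported on $\{x_i\}$, and (H4b) lets you compare $\nu_n(\ms V^{1,i}_n)\to Z_\infty^{-1}\mu(\{x_i\})$ with $\mu'(\{x_i\})$; summing over $i$ yields $Z_\infty\le 1$, hence $Z_\infty=1$ and $\mu'=\mu$. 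Two cosmetic points: on $\ms V^{(p)}_n$ one has $\nu_n=Z_n^{-1}\pi_n(\ms V^{(p)}_n)\,\mu_n$, not $Z_n^{-1}\mu_n$ (harmless since $\pi_n(\ms V^{(p)}_n)\to1$ by Lemma \ref{l17}); and the ``direct estimate'' via harmonic measure in Step 3 is left unproved, but it is superseded by the compactness argument, so you should simply drop it.
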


\begin{proof}
Let $\mu_n$ be the sequence of probability measures given by
\eqref{26}.  By Proposition \ref{l16}, $\mu_n\to \mu$ and
\begin{equation}
\label{30c}
\limsup_{n\to\infty} \theta^{(p)}_n\, \ms I^{(p)}_n(\mu_n) \;\le\; \ms
I^{(p)}(\mu)\;.  
\end{equation}

Since the trace process $Y^{\ms V^{(p)}_n}_t$ is irreducible and
$\mu_n(x)>0$ for all $x\in \ms V^{(p)}_n$, by \cite[Lemma A.3]{l-mld},
there exists $h_n\colon \ms V^{(p)}_n \to (0,\infty)$ such that
\begin{equation*}
\ms I^{(p)}_n(\mu_n) \;=\;  - \,
\int_{\ms V^{(p)}_n} \frac{1}{h_n}\, 
\big [ \, (\, \mf T_{\ms V^{(p)}_n} \ms L_n)\,  h_n \,\big]\,
\; d \mu_n \;.
\end{equation*}
Denote by $u_n$ the harmonic extension of $h_n$ to $\Xi_n$ given by
\eqref{07}. Recall the definition of the tilted generator
$\mf M_{u_n}\, \ms L_n$ introduced in \eqref{21}, and let $\nu_n$ be
its stationary state. By \cite[Proposition C.1]{l-mld},
\begin{equation}
\label{30b}
\ms I_n(\nu_n) \;\le\; \ms I^{(p)}_n (\mu_n)\;.  
\end{equation}
In view of \eqref{30c}, \eqref{30b}, it remains to show that
$\nu_n \to \mu$.

As $\nu_n$ is the stationary state of the Markov chain $X^{(n)}_t$
tilted by $u_n$, by \cite[Proposition 6.3]{bl2}, $\nu_n$ conditioned
to $\ms V^{(p)}_n$ is the stationary state of the Markov chain induced
by the generator $\mf T_{\ms V^{(p)}_n}\, \mf M_{u_n}\, \ms L_n$. By
\cite[Lemma C.4]{l-mld} this generator coincides with
$\mf M_{h_n}\, \mf T_{\ms V^{(p)}_n}\,\ms L_n$. By definition, $\mu_n$
is the stationary state of this later Markov chain. Hence,
$\mu_n (\,\cdot\,) = \nu_n (\,\cdot\,|\, \ms V^{(p)}_n\,)$.

Since $\mu_n \to \mu$ and
$\mu_n (\,\cdot\,) = \nu_n (\,\cdot\,|\, \ms V^{(p)}_n\,)$, it is
enough to show that $\nu_n (\,\ms V^{(p)}_n\,) \to 1$.  Assume, by
contradiction, that $\liminf_n \nu_n(\ms V^{(p)}_n) <1$. Since
$\ms P(\Xi)$ is compact for the weak topology, consider a subsequence,
still denoted by $\nu_n$, such that $\nu_n \to \nu \in \ms P(\Xi)$,
$\lim_n \nu_n(\ms V^{(p)}_n) <1$. By the \emph{$\Gamma$-liminf},
\begin{equation}
\label{17}
\liminf_{n\to\infty} \theta^{(p)}_n\, \ms I_n(\nu_n) \;\ge\; \ms
I^{(p)}(\nu)\;. 
\end{equation}
On the other hand, fix $\epsilon>0$ given by condition (H4b). As
$\nu_n \to \nu$,
\begin{equation*}
\nu \big(\, \cup_{i\in S^\star_p} B_\epsilon(x_i)\, \big)
\; \le\; \liminf_{n\to\infty}
\nu_n \big(\, \cup_{i\in S^\star_p} B_\epsilon(x_i)\, \big)
\; \le\; \liminf_{n\to\infty}
\nu_n \big(\, \ms V^{(p)}_n \, \big) \;<\; 1\;,
\end{equation*}
where $\color{blue} S^\star_p = \cup_{j\in S_p} S_{p,j}$.  Hence,
$\nu (\{ x_i : i\in S^\star_p \}) < 1$, and, by the definition
\eqref{o-83b} of $\ms I^{(p)}$, $\ms I^{(p)}(\nu) = +\infty$. However,
by \eqref{30c}, \eqref{30b},
\begin{equation*}
\limsup_{n\to\infty} \theta^{(p)}_n\, \ms I_n(\nu_n) \;\le\;
\ms I^{(p)}(\mu) \;=\; \bb I^{(p)} (\omega) \;<\; \infty \;,
\end{equation*}
in contradiction with \eqref{17}.

This shows that $\nu_n (\,\ms V^{(p)}_n\,) \to 1$, so that
$\nu_n \to \mu$, and completes the proof of the lemma.
\end{proof}

\begin{proof}[Proof of Theorem \ref{mt1}]
Condition (a) of Definition \ref{def1} follows from assumption
(H1). Condition (c) is proved in Lemma \ref{l39}.  Condition (d)
follows from assumption (H3), the definition of the functional
$\ms I^{(\mf q)}$ and the fact that the functional $\bb I^{(\mf q)}$
vanishes only at the stationary states of the chain
$\bb X^ {(\mf q)}_t$. It remains to examine condition (b) of the
definition.

The proof is by induction in $p$. The case $p=0$ is covered by
condition (H0). Fix $1\le p\le \mf q$ and assume that the result holds
for $0\le p'<p$. 

\smallskip\noindent
\emph{$\Gamma$-liminf}: For all $\mu\in \ms P(\Xi)$
and all sequences of probability measures $\mu_n \in \ms P(\Xi_n)$ such
that $\mu_n\to \mu$,
\begin{equation}
\label{44}
\liminf_{n\to\infty} \theta^{(p)}_n\, \ms I^{(p)}_n(\mu_n) \;\ge\; \ms
I^{(p)}(\mu)\;.  
\end{equation}

Fix a probability measure
$\mu$ on $\Xi$ and a sequence $\mu_n$ converging to $\mu$.  Suppose that
$\ms I^{(p-1)} (\mu)>0$. In this case, since
$\theta^{(p-1)}_n\, \ms I_n$ $\Gamma$-converges to $\ms I^{(p-1)}$ and
$\theta^{(p)}_n/\theta^{(p-1)}_n \to\infty$,
\begin{equation*}
\liminf_{n\to\infty} \theta^{(p)}_n\, \ms I_n(\mu_n) \;=\;
\liminf_{n\to\infty} \frac{\theta^{(p)}_n}{\theta^{(p-1)}_n}
\, \theta^{(p-1)}_n\, \ms I_n(\mu_n) \;\ge\;
\ms I^{(p-1)} (\mu)\,
\lim_{n\to\infty} \frac{\theta^{(p)}_n}{\theta^{(p-1)}_n}
\;=\; \infty\;.
\end{equation*}
On the other hand, by Lemma \ref{l39}, $\ms I^{(p)} (\mu) =
\infty$. This proves the \emph{$\Gamma$-liminf} convergence for measures
$\mu$ such that $\ms I^{(p-1)} (\mu)>0$.

Assume that $\ms I^{(p-1)} (\mu)=0$. By Lemma \ref{l39}, there exists
a probability measure $\omega$ on $S_p$ such that
$\mu = \sum_{j\in S_p} \omega_j\, \pi^{(p)}_j$. Hence, assertion
\eqref{44} follows from Lemma \ref{l06}.

\smallskip\noindent
\emph{$\Gamma$-limsup}.
Fix $\mu\in \ms P(\Xi)$.
If $\ms I^{(p)}(\mu) = \infty$, there is nothing to prove. Assume,
therefore, that $\ms I^{(p)}(\mu) < \infty$.  Hence, by \eqref{o-83b},
$\mu = \sum_{j\in S_p} \omega_j \, \pi^{(p)}_j$ for some
$\omega \in \ms P(S_p)$.

By Lemmata B.4 and B.3 in \cite{l-mld}, it is enough to prove the
\emph{$\Gamma$-limsup} for measures
$\mu = \sum_{j\in S_p} \omega_j \, \pi^{(p)}_j$ for some
$\omega \in \ms P(S_p)$ such that $\omega_j>0$ for all $j\in S_p$.
This is the content of Lemma \ref{l07}.
\end{proof}

\begin{remark}
\label{rm3b}
The main ingredients of the proof of the \emph{$\Gamma$-liminf} are the
harmonic identity stated in \cite[Lemma A.3]{l-mld}, and Proposition
\ref{l03} which extends condition (H1) to measures converging to a
convex combination of the measures $\pi^{(p)}_j$ and satisfying
\eqref{02}.
\end{remark}

\begin{remark}
\label{rm3}
The proof of the \emph{$\Gamma$-limsup} forced the wells $\ms V^{1,j}_n$ to
contain macroscopic balls $B_\epsilon(x_j)$. Indeed, in the proof that
$\nu_n(\ms V^{(p)}_n) \to 1$ in Lemma \ref{l07}, we assumed by
contradiction that $\nu_n \to \nu$ and
$\lim_n \nu_n(\ms V^{(p)}_n) <1$. To conclude that
$\nu(\{x_j : j\in S^\star_{p}\})<1$ we had to suppose (since we
adopted the weak topology in the space of measures) that the sets
$\ms V^{1,j}_n$ contain open balls. It is in this proof, and only
here, that we needed the space $\ms P(\Xi)$ to be compact.
\end{remark}

\begin{lemma}
\label{l14}
Assume that conditions {\rm (H1), (H2), (H4a), (H6a)} hold.  Fix
$1\le p \le \mf q$ and $\mu = \sum_{j\in S_p} \omega_j\, \pi^{(p)}_j$
for some $\omega \in \ms P(S_p)$ such that $\omega_j>0$ for all
$j\in S_p$. Then, there exists a sequence $\nu_n \in \ms P(\Xi_n)$
such that $\nu_n \to \mu$ and 
\begin{equation*}
\limsup_{n\to\infty} \theta^{(p)}_n\, \ms I_n(\nu_n) \;\le\; \ms
I^{(p)}(\mu)\;.  
\end{equation*}
\end{lemma}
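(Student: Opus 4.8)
The plan is to mimic the proof of Lemma \ref{l07}, replacing only its final step — the compactness and contradiction argument, which there relies on condition (H4b) and on $\ms I^{(p)}$ being a \emph{$\Gamma$-liminf} — by a direct estimate that uses condition (H6a) instead. Let $\mu_n\in\ms P(\ms V^{(p)}_n)$ be the measure in \eqref{26}. By Proposition \ref{l16}, $\mu_n\to\mu$ and $\limsup_n\theta^{(p)}_n\,\ms I^{(p)}_n(\mu_n)\le\ms I^{(p)}(\mu)$. As in the proof of Lemma \ref{l07}, choose $h_n\colon\ms V^{(p)}_n\to(0,\infty)$ realizing $\ms I^{(p)}_n(\mu_n)$; since the trace process $Y^{\ms V^{(p)}_n}_t$ is reversible, the optimal $h_n$ equals $\sqrt{d\mu_n/d\pi^{(p)}_n}$ up to a multiplicative constant (which is immaterial), and by the explicit form of $\mu_n$ this is constant on each well, equal to $\sqrt{\omega_j\,\pi_n(\ms V^{(p)}_n)/\pi_n(\ms V^{p,j}_n)}$ on $\ms V^{p,j}_n$. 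Let $u_n$ be the harmonic extension of $h_n$ to $\Xi_n$ given by \eqref{07}, and let $\nu_n$ be the stationary state of the tilted generator $\mf M_{u_n}\,\ms L_n$ of \eqref{21}; then $\nu_n(\,\cdot\,)\propto\pi_n(\,\cdot\,)\,u_n(\,\cdot\,)^2$. Exactly as in Lemma \ref{l07}, \cite[Proposition C.1]{l-mld} gives $\ms I_n(\nu_n)\le\ms I^{(p)}_n(\mu_n)$ — hence the desired $\limsup$ bound — while \cite[Lemma C.4]{l-mld} and \cite[Proposition 6.3]{bl2} yield $\mu_n(\,\cdot\,)=\nu_n(\,\cdot\,|\,\ms V^{(p)}_n)$. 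It thus remains to show that $Z_n:=\nu_n(\ms V^{(p)}_n)\to1$.

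To prove $Z_n\to1$, use $\nu_n\propto\pi_n u_n^2$ and $u_n=h_n$ on $\ms V^{(p)}_n$ to write
\begin{equation*}
\frac1{Z_n}\;=\;1\;+\;\frac{\sum_{z\in\Delta^{(p)}_n}\pi_n(z)\,u_n(z)^2}
{\sum_{x\in\ms V^{(p)}_n}\pi_n(x)\,h_n(x)^2}\;.
\end{equation*}
Because $u_n$ is harmonic on $\Delta^{(p)}_n=\Xi_n\setminus\ms V^{(p)}_n$ and coincides with $h_n$ on $\ms V^{(p)}_n$, the maximum principle gives $u_n(z)^2\le\max_{\ms V^{(p)}_n}h_n^2$ for every $z$, so the second term above is at most $\big(\max_{\ms V^{(p)}_n}h_n^2/\min_{\ms V^{(p)}_n}h_n^2\big)\,\pi_n(\Delta^{(p)}_n)/\pi_n(\ms V^{(p)}_n)$. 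Condition (H6a) gives $c_0\,\pi_n(\ms V^{(p)}_n)\le\pi_n(\ms V^{p,j}_n)\le\pi_n(\ms V^{(p)}_n)$ for all $j\in S_p$, hence $\omega_j\le h_n^2\le\omega_j/c_0$ on $\ms V^{p,j}_n$, and therefore $\max_{\ms V^{(p)}_n}h_n^2/\min_{\ms V^{(p)}_n}h_n^2\le(\max_{j}\omega_j)/(c_0\min_{j}\omega_j)<\infty$, uniformly in $n$ (here the hypothesis $\omega_j>0$ for all $j$ is used). Since $\pi_n(\ms V^{(p)}_n)\to1$ by Lemma \ref{l17}, we have $\pi_n(\Delta^{(p)}_n)\to0$, so the second term vanishes and $Z_n\to1$. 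As $\mu_n=\nu_n(\,\cdot\,|\,\ms V^{(p)}_n)\to\mu$ and $Z_n=\nu_n(\ms V^{(p)}_n)\to1$, standard arguments give $\nu_n\to\mu$ weakly, which completes the argument.

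The only genuinely new ingredient compared with Lemma \ref{l07} is the uniform bound on the oscillation ratio $\max_{\ms V^{(p)}_n}h_n^2/\min_{\ms V^{(p)}_n}h_n^2$ of the optimal tilting function; this is exactly where condition (H6a) enters, replacing the role played by condition (H4b) (macroscopic wells) and by the compactness of $\ms P(\Xi)$ in Lemma \ref{l07}. I expect this ratio bound to be the main point: once $h_n$ is known to be piecewise constant with controlled oscillation, the leakage of $\nu_n$ into $\Delta^{(p)}_n$ is controlled purely by $\pi_n(\Delta^{(p)}_n)\to0$, and everything else is a routine transcription of the proof of Lemma \ref{l07}.
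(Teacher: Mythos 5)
Your proposal is correct and follows essentially the same route as the paper: the recovery sequence is the same measure $\nu_n\propto \pi_n\,u_n^2$ with $u_n$ the harmonic extension of the piecewise-constant $\sqrt{f_n}$, condition (H6a) is used exactly as in the paper (to bound $f_n$ uniformly and, via the maximum principle, its harmonic extension), and Lemma \ref{l17} kills the mass on $\Delta^{(p)}_n$. The only cosmetic differences are that the paper obtains the identity $\ms I_n(\nu_n)=Z_n^{-1}\pi_n(\ms V^{(p)}_n)\,\ms I^{(p)}_n(\mu_n)$ directly from \cite[Corollary A.2]{bgl-lm} rather than the inequality from \cite[Proposition C.1]{l-mld}, and its $Z_n$ estimate needs only the upper bound on $f_n$ rather than your oscillation ratio.
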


\begin{proof}
Let $\mu_n$ be the sequence of probability measures given by
\eqref{26}.  By Proposition \ref{l16}, $\mu_n\to \mu$ and
\begin{equation}
\label{30d}
\limsup_{n\to\infty} \theta^{(p)}_n\, \ms I^{(p)}_n(\mu_n) \;\le\; \ms
I^{(p)}(\mu)\;.  
\end{equation}
Let $f_n \colon \ms V^{(p)}_n \to \bb R_+$ be the function given by
$f_n (x) = \mu_n(x) / \pi^{(p)}_n (x)$, where recall
$\pi^{(p)}_n(x) = \pi_n(x)/\pi_n(\ms V^{(p)}_n)$. An elementary
computation yields that
$f_n(x) = \omega_j \pi_n(\ms V^{(p)}_n)/\pi_n(\ms V^{p,j}_n)$ for
$x\in \ms V^{p,j}_n$.

We extend $f_n$ to $\Xi_n$ solving the Poisson equation \eqref{07}
with $h_n = \sqrt{f_n}$. Denote by $\sqrt{g_n}:=u_n$ the solution. Let
$\nu_n = Z_n^{-1} \, g_n \, \pi_n$, where $Z_n$ is a
normalising constant which turns $\nu_n$ into a probability measure.

We claim that $Z_n\to 1$ and $\nu_n \to \mu$. By definition,
\begin{equation*}
Z_n \;=\; \sum_{x\not\in \ms V^{(p)}_n} g_n(x) \, \pi_n(x)
\;+\;
\sum_{x\in \ms V^{(p)}_n} f_n(x) \, \pi_n(x)\;.
\end{equation*}
By condition (H6a), the function $f_n$ is bounded. As $\sqrt{g_n}$ is
the solution of the Poisson equation, $g_n$ is bounded by the same
constant. Hence, by Lemma \ref{l17}, the first term, which is bounded
by $C_0\, \pi_n([\ms V^{(p)}_n]^c)$ for some finite constant $C_0$,
vanishes as $n\to\infty$. The second one, by definition of $f_n$, is
equal to $\pi_n(\ms V^{(p)}_n)$ which, by Lemma \ref{l17}, converges to
$1$ as $n\to\infty$. This proves that $Z_n \to 1$.

By similar reasons, $\nu_n([\ms V^{(p)}_n]^c) \to 0$. On the other
hand, for any subset $\ms A$ of $\ms V^{(p)}_n$,
$\nu_n (\ms A) = Z_n^{-1} \,\pi_n(\ms V^{(p)}_n)\, \mu_n(\ms A)$. As
$Z_n \to 1$, $\pi_n(\ms V^{(p)}_n)\to 1$, and, by Proposition
\ref{l16}, $\mu_n\to \mu$, we may conclude that $\nu_n\to \mu$, as
claimed.

We turn to the proof that
$\limsup_n \theta^{(p)}_n\, \ms I_n(\nu_n) \le \ms I^{(p)} (\mu)$. By
\eqref{f6} and the definition of $\nu_n$,
\begin{equation*}
\ms I_n(\nu_n) \;=\; Z_n^{-1} \,
\< \, \sqrt{g_n} \, , \, (-\ms L_n)\, \sqrt{g_n} \,\>_{\pi_n} \;.
\end{equation*}
By \cite[Corollary A.2]{bgl-lm} and the definition of $h_n$, the
right-hand side is equal to
\begin{equation*}
Z_n^{-1} \, \pi_n(\ms V^{(p)}_n)\,
\< \sqrt{f_n} \, , (- \mf T_{\ms V^{(p)}_n}
\ms L_n) \sqrt{f_n}\>_{\pi^{(p)}_n} \;=\;
Z_n^{-1} \, \pi_n(\ms V^{(p)}_n)\,
\ms I^{(p)}_n(\mu_n) \;.
\end{equation*}
To complete the proof, it remains to recall \eqref{30d} and that
$Z_n \to 1$, $\pi_n(\ms V^{(p)}_n)\to 1$.
\end{proof}

\begin{proof}[Proof of Theorem \ref{mt2}]
The proofs of conditions (a), (c) and (d) of Definition \ref{def1} and
the one of the \emph{$\Gamma$-liminf} are analogous to the one presented in
Theorem \ref{mt1}. For the later, we just apply Lemma \ref{l13}
instead of Lemma \ref{l06}. The proof of the \emph{$\Gamma$-limsup} is
identical.
\end{proof}

\section{Random walks in potential fields}
\label{sec6}

In this section, based on the theory developed in Section \ref{sec1},
we derive the full $\Gamma$-expansion of the large deviations rate
functional of a random walk in a potential field.  Let
$\color{blue}\Xi= \bb T^d$ with $d\in \bb N$,
$\color{blue} e_1,\ldots, e_d$ denote the vectors of the canonical
basis of $\bb R^d$, and fix a potential function
$\color{blue}F\colon \Xi\to \bb R$ satisfying the following
assumptions:

\begin{enumerate}

\item[\textbf{(F1)}]\label{a:h1} $F\in \ms C^{2,1}(\Xi)$, i.e.,
$F\in \ms C^2(\Xi)$ and all second-order partial derivatives of $F$
are Lipschitz continuous.

\item[\textbf{(F2)}]\label{a:h0} $F$ has finitely-many critical
points $\color{blue}\mc P:=\{z_1,\ldots, z_k\}$.

\item[\textbf{(F3})]\label{a:h2} All the eigenvalues of the Hessian of
$F$ at the critical points are non-zero.

\item[\textbf{(F4)}]\label{a:h3} Let
${\color{blue}\mc S}\subset \mc P$ be the set of saddle points of $F$.
The Hessian of $F$ at saddle points $z\in \mc S$ has only one negative
eigenvalue.

\item[\textbf{(F5)}]\label{a:h4} All saddle points $z\in \mc S$ are at
the same height: there exists $h\in \bb R$ such that
$F(z)={\color{blue}h}$ for all $z\in \mc S$.
	
\item[\textbf{(F6)}]
\label{a:h5}
Let $\color{blue}\mc M = \{ m_1,\ldots, m_{\mf n}\}$ be the set of
local minima of $F$.  The set $\{x\in \Xi: F(x)\leq h\}$ is connected
and it contains $\mc M$.  Each connected component of
$\{x\in \Xi: F(x)<h\}$ contains a unique critical point of $F$ (one of
the local minima). Denote by $\color{blue} \ms C_k$ the connected
component which contains $m_k$.

\end{enumerate}

\begin{remark}
The asymptotic behaviour of the mean jump rates \eqref{20} for random walks in
potential fields stated in condition (H1) has been studied in
\cite{lmt2015} under condition (F5). It should be possible to remove
this condition, but this has not been done yet and there are serious
technical problems to compute some capacities without this assumption.
\end{remark}

Let $\Xi_n$ denote the discretisation of $\Xi$, and $\pi_n$ the
probability measure on $\Xi_n$ defined by
\begin{equation*}
{\color{blue}\pi_n(x)} \;:=\; \frac 1{Z_{F,n}}\, e^{-n F(x)}\;,\quad x\in
\Xi_n\;, 
\end{equation*}
where $Z_{F,n}$ is the partition function
$\color{blue}Z_{F,n} := \sum_{x\in \Xi_n} \exp\left(-n F(x)\right)$.  Let
$X^{(n)}_t$ be the continuous-time Markov chain on $\Xi_n$ with
transition rates $R_n(x,y)$ given by
\begin{equation*}
{\color{blue}
R_n(x,y)}\::=\:
\begin{cases}
e^{-\frac{n}{2}\left(F(y)-F(x)\right)} &\text{if}\ |y-x|=1/n\\
0 &\text{otherwise}\ ;
\end{cases}
\end{equation*}
the corresponding generator $\ms L_n$ and level two large deviations
rate functional $\ms I_n$ are then defined as in \eqref{47} and
\eqref{f4}, respectively. With these definitions, the Markov chain
$X^{(n)}_t$ is reversible with respect to $\pi_n$; hence, $\ms I_n$
reads as in \eqref{f6}.

\subsection{Metastable behaviour of $X^{(n)}_t$}
\label{ss6.1}

The rate functionals appearing in the $\Gamma$-expansion of $\ms I_n$
are expressed in terms of finite state Markov chains which describe
the evolution of $X^{(n)}_t$ in certain time-scales. This is the
content of this subsection, whose results are taken from
\cite{lmt2015} and included in sake of completeness.

\smallskip\noindent{\bf Valleys.}  Recall that
$\mc M = \{{ m_1},\ldots,{m_{\mf n}}\}$ stands for the set of local
minima of $F$, and set ${\color{blue} S_1}=\{1,\ldots,\mf n\}$.  By
condition (F5), $h$ represents the common height of the saddle points:
$F(z) = h$ for all $z\in \mc S$.  Fix $\varepsilon>0$, such that
$\max_{k\in S_1}F(m_k) + \varepsilon<h$ and, for $k\in S_1$, let
${\color{blue}\mc W(m_k)}\subseteq \Xi$ be the connected component of
$\{x\in\Xi: F(x)<F(m_k)+\varepsilon\}$ that contains $m_k$. Note that
we have chosen $\varepsilon>0$ small enough for $m_k$ to be the unique
critical point of $F$ in $\mc W(m_k)$, $k\in S_1$.

Let
\begin{equation*}
{\color{blue}\mc W^{m_k}_n} :=\mc W(m_k) \cap\Xi_n \;, \quad
k\in S_1 \;,
\end{equation*}
and let ${\color{blue}\hat d_k}=h-F(m_k)$ be the depth of the well
$\mc W^{m_k}_n$. Denote by $\color{blue} d_1<d_2<\cdots<d_{\mf q }$
the depths in increasing order so that
$\{ d_1, d_2, \cdots , d_{\mf q } \} = \{ \hat d_k : k\in S_1\}$. The
cardinality of this set defines the number $\mf q$ of time-scales at
which the Markov chain $X^{(n)}_t$ exhibits a metastable behaviour.

Let
\begin{equation*}
{\color{blue} J_p} \;:=\; \{k\in S_1: \hat d_k \geq d_p\}
\;=\; \big\{\, k\in S_1: h - F(m_k)  \geq d_p \,\big\} \;,
\end{equation*}
$1\leq p \leq \mf q$, be the indices of the wells $\mc W^{m_k}_n $
with depth larger than or equal to $d_p$. Hence,
$J_1\supsetneq J_2 \supsetneq \cdots \supsetneq J_\mf q$, $J_1 = S_1$,
and $\{m_k: k\in J_{\mf q}\}$ coincides with the set of the global
minima of $F$.

Let $\mc M_p$ be the set of local minima of $F$ in wells with depth
larger than or equal to $d_p$:
$\color{blue} \mc M_p = \{m_k : k\in J_p\}$. Enumerate this set to
write it as $\mc M_p = \{m_{p,1}, \dots, m_{p,\mf n_p}\}$, where
$\color{blue} \mf n_p = |J_p|$ stands for the number of elements of
$J_p$. Note that $\mf n_1 = \mf n$ is the number of local minima of
$F$ and $\mf n_{\mf q}$ the number of global minima. Moreover, since
$\mc M_p\subset \mc M_{q}$, $1\le q < p\le \mf q$, for each
$1\le j\le \mf n_p$, there exists $1\le k\le \mf n_{q}$ such that
$m_{p,j} = m_{q,k} $.

We are finally in a position to introduce the valleys. Fix
$1\le p\le \mf q$, and let
$\color{blue} \ms V^{p,j}_n = \mc W^{m_{p,j}}_n$,
$j\in {\color{blue} S_p = \{1, \dots, \mf n_p\}}$.

\smallskip\noindent{\bf A graph.}  Let $\bb G = (S_1, E)$ be a
weighted graph whose vertices are the elements of $S_1$. To define the
set of edges, let $\color{blue} \overline{\ms C}_k$ be the closure of
the connected component $\ms C_k$ introduced in condition (F6).  Place
an edge between $i$ and $j\in S_1$ if, and only if, there exists a
saddle point $z\in \mc S$ belonging to
$\overline{\ms C}_i \cap \overline{\ms C}_j$. In this case, the weight
of the edge between $i$ and $j$, denoted by $\bs c(i,j)$, is set to be
\begin{equation}
\label{v22}
{\color{blue}\bs c(i,j)} \;=\;
\sum_{z\in \mc S \cap (\overline{\ms C}_i \cap \overline{\ms C}_j) } 
\,  \frac{\gamma (z)}{2\pi\sqrt{- \det {\rm Hess}\, F(z)}} \;,    
\end{equation}
where $\color{blue} - \, \gamma(z)$ is the unique negative eigenvalue of
the Hessian of $F$ at $z$. The graph $\bb G$ has to be interpreted as
an electrical network, where the weights $\bs c(a,b)$ represent the
conductances.

Denote by $\{Y_k : k\ge 0\}$ the discrete-time random walk on $S_1$
which jumps from $i$ to $j$ with probability
\begin{equation}
\label{v42}
p(i,j) \;=\; \frac{\bs c(i,j)}{\sum_{k\in S_1} \bs c (i,k)}\;\cdot
\end{equation}
By condition (F6), the Markov chain $Y_k$ is irreducible.  Denote by
$\color{blue} \bb P^Y_i$, $i\in S_1$, the distribution of the chain
$Y_k$ starting from $i$ and by $h_{A,B}$, $A$, $B\subset S$,
$A\cap B=\varnothing$, the equilibrium potential between $A$ and $B$:
\begin{equation*}
{\color{blue} h_{A,B} (i)}
\;=\; \bb P^Y_i \big[ H_A < H_B \big]\;,\;\; i\in S_1\;,
\end{equation*}
where $H_C$, $C\subset S_1$, represents the hitting time of $C$
introduced in \eqref{o-201}. The conductance between $A$ and $B$ is
defined as
\begin{equation*}
{\color{blue} \Cap_{\bb G} (A,B)}
\;:=\; \frac 12 \sum_{i,j\in S_1} \bs c (i,j)
\big[h_{A,B} (j)  - h_{A,B} (i) \big]^2 \;.
\end{equation*}

Fix $1\le p\le \mf q$, $k\in S_p$, and recall that this index
corresponds to a local minimum $m_{p,k}$ of $F$. Let
$\mf b_{p,k}\in S_1$ be the index of the local minimum $m_{p,k}$ when
regarded as an element of $S_1$ so that $m_{p,k} = m_{1,\mf
b_{p,k}}$. For $i$, $j$ in $S_p$, let
\begin{equation}
\label{v41}
\begin{aligned}
{\color{blue}\bs c_p (i,j)} \;:=\; \frac 12 \Big\{ \,
& \Cap_{\bb G}( \{\mf b_{p,i}\}, S_p\setminus
\{\mf b_{p,i}\}) \;+\;
\Cap_{\bb G}( \{\mf b_{p,j}\}, S_p\setminus
\{\mf b_{p,j}\})  \\
-\; & \Cap_{\bb G}( \{\mf b_{p,i} , \mf b_{p,j}\},
S_p\setminus \{\mf b_{p,i} , \mf b_{p,j} \}) \, \Big\} \;.
\end{aligned}
\end{equation}
Clearly, $\bs c_1 (i,j) = \bs c (i,j)$ for $i$, $j\in S_1$. Let
$\mc M_{\mf q+1} = \varnothing$, and
\begin{equation}
\label{v23}
{\color{blue}\bs r_p (i,j)} \;:=\;
\begin{cases}
\bs c_p (i,j)/\gamma (m_{p, i}) & m_{p, i} \in
\mc M_{p}  \setminus \mc M_{p+1} \,,\, j\in S_p \;, \\
0 & m_{p,i} \in \mc M_{p+1} \,,\, j\in S_p\;,
\end{cases}
\end{equation}
where, for $m\in \mc M$,
\begin{equation*}
{\color{blue}\gamma (m)} \;:=\; 
\frac{1}{\sqrt{\det {\rm Hess}\, F(m)}} \;.
\end{equation*}
Denote by $\color{blue} \bb X^{(p)}_t$ the $S_p$-valued Markov chain
with jump rates $\bs r_p$.

Note that the Markov chain $\bb X^{(p)}_t$ has a very simple
structure. Suppose that $p<\mf q$. A point $k\in S_p$ is an absorbing
point if $m_{p,k} \in \mc M_{p+1}$, otherwise it is a transient
point. If $p=\mf q$, the chain is irreducible.

\smallskip\noindent{\bf Model reduction.}  Let
$\color{blue} \ms V^{(p)}_n = \cup_{j\in S_p} \ms V^{p,j}_n$,
$1\le p\le \mf q$, and $\Psi^p_n \colon \ms V^{(p)}_n \to S_p$ the
projection given by
\begin{equation}
\label{v25}
{\color{blue}\Psi^p_n} \;=\; \sum_{j\in S_p} j \, \chi_{_{\ms V^{p,j}_n}}  \;.
\end{equation}
Denote by $\color{blue} Y^{p,n} (t)$ the trace of the Markov chain
$X^{(n)}_t$ on the set $\ms V^{(p)}_n $, as defined in Section
\ref{sec1}. Let $\bb X^{p,n}_t$ be the projection of the Markov chain
$Y^{p,n} (t)$ speeded-up by $\color{blue}\theta^{(p)}_n:=n \exp\{n d_p\}$:
\begin{equation*}
{\color{blue}  \bb X^{p,n}_t} \;:=\;
\Psi^p_n \big(\, Y^{p,n}  (t \theta^{(p)}_n) \, \big)\;,
\end{equation*}
By \cite[Theorem 2.4]{lmt2015}, the process $\bb X^{p,n}_t$ converges
in the Skorohod topology to the Markov chain
$\color{blue} \bb X^{(p)}_t$ introduced above.

\subsection{$\Gamma$-expansion of $\ms I_n$}

The full $\Gamma$-expansion of $\ms I_n$ takes the form
\eqref{f05-general}, cf.\ \eqref{eq:expansion-RWPF}.  The definition
of the functionals which arise in the $\Gamma$-expansion of $\ms I_n$
requires some notation. Let $\ms J: \ms P(\Xi)\to [0,+\infty)$ be
given by
\begin{equation*}
{\color{blue} \ms J(\mu)}\::=\:\int_\Xi \ms G\, d\mu\ ,
\end{equation*}
where $\ms  G = \ms G_F \colon \Xi \to \bb R_+$ is defined as
\begin{equation}
\label{eq:G-cosh}
{\color{blue} \ms G(x)} \::=\:
\sum_{i=1}^d 2\left\{\cosh\left(\frac{1}{2}\partial_i F(x)\right)-1\right\}\ .
\end{equation}
Lemma \ref{lem:zero-expansion} states that $\ms I_n$
$\Gamma$-converges to $\ms J$. This is the first step in the
expansion. 

We turn to the second term in the expansion. Let
$\ms J^{(0)} \colon\ms P(\Xi)\to [0,+\infty]$ be given by
\begin{equation}
\label{eq:I0}
{\color{blue}
\ms J^{(0)}(\mu)}\::=\: \int_\Xi \zeta\, d\mu\ ,
\end{equation}
where
\begin{equation}
\label{eq:zeta}
{\color{blue}
\zeta(x)}:= \begin{cases}
\sum_{i=1}^d \max\{\:-\,\xi_i(x) \,,\, 0\:\} &\text{if}\ x \in \mc P\\
+\infty &\text{otherwise}\ ,
\end{cases}
\end{equation}
and $\color{blue}\xi_i(x)$, $i=1,\ldots,d$, denote the eigenvalues of
${\rm Hess }\,  F (x)$. Proposition \ref{pr:step-1-RW} asserts that 
$n\, \ms I_n$ $\Gamma$-converges to $\ms J^{(0)}$. 

Finally, let
$\color{blue} \bb J^{(p)} \colon \ms P (S_p) \to [0,+\infty)$,
$1\le p\le \mf q$, be the rate functional given by \eqref{40}, where
$\bb L_p$ corresponds to the generator of the Markov chain
$\bb X^{(p)}_t$ introduced in the previous subsection, and
$\color{blue} \ms J^{(p)} \colon \ms P(\Xi) \to [0,+\infty]$ the one
given by \eqref{o-83b}.  The main result of this section reads as
follows

\begin{theorem}
\label{mt3}
Assume that conditions {\rm (F1) -- (F6)} are in force. Then, the full
$\Gamma$-expansion, as specified in Definition \ref{def1}, of the
level two large deviations rate functional $\ms I_n$ of the random
walk introduced at the beginning of this section is given by
\begin{equation}\label{eq:expansion-RWPF}
\ms I_n \;=\; \ms J \;+\; \frac{1}{n} \, \ms J^{(0)}  \;+\;
\sum_{p=1}^{\mf q} \frac{1}{n\, e^{d_p n}} \, \ms J^{(p)}  \;.
\end{equation}
\end{theorem}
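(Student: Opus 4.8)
The plan is to recognise the expansion \eqref{eq:expansion-RWPF} as an instance of the iterated mechanism of Remark \ref{rm-1}, with $\mf r=1$ and $\kappa^{(1)}_n=n$. First I would invoke the two ``direct computation'' steps already recorded in this section: Lemma \ref{lem:zero-expansion}, which gives that $\ms I_n$ $\Gamma$-converges to $\ms J$, and Proposition \ref{pr:step-1-RW}, which gives that $n\,\ms I_n$ $\Gamma$-converges to $\ms J^{(0)}$. Since $\ms G\ge 0$ vanishes exactly on the set of critical points $\mc P$ while $\zeta\equiv+\infty$ off $\mc P$, one has $\ms J^{(0)}(\mu)<\infty$ if and only if $\ms J(\mu)=0$; and, by hypothesis (F3), $\zeta(z)=0$ precisely when $z$ is a local minimum of $F$, so the zero-level set of $\ms J^{(0)}$ consists of the convex combinations of the Dirac masses at $\mc M$. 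Hence the pair $(\ms J,\ms J^{(0)})$ together with the speed $\kappa^{(1)}_n=n$ satisfies conditions (a)--(c) of Definition \ref{def1} and condition (d') of Remark \ref{rm-1}; equivalently, the level-two large deviations rate functional $\ms K_n:=n\,\ms I_n$ of the accelerated chain $Y^{(n)}_t:=X^{(n)}_{tn}$ satisfies hypothesis (H0) with $\ms M=\mc M$, $\mf n_1=\mf n$ and $\ms I^{(0)}=\ms J^{(0)}$.

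Next I would verify that the reversible chain $Y^{(n)}_t$ (generator $n\ms L_n$, stationary measure $\pi_n$) and $\ms K_n$ fulfil hypotheses (H1)--(H5) with the valleys $\ms V^{p,j}_n=\mc W^{m_{p,j}}_n$, the finite-state chains $\bb X^{(p)}_t$, and the residual time-scales $e^{d_p n}$ of Subsection \ref{ss6.1} (which correspond to the time-scales $\theta^{(p)}_n=n\,e^{d_p n}$ used for $X^{(n)}_t$ there, because the trace of $Y^{(n)}_t$ runs at $n$ times the speed of the trace of $X^{(n)}_t$). Hypothesis (H1), i.e.\ $e^{d_p n}\,r^{\mf V_p}_{n,Y}(i,j)\to\bs r_p(i,j)$, is precisely the mean-rate convergence established in the metastability analysis of \cite{lmt2015} (cf.\ \cite[Theorem~2.4]{lmt2015} and Remark \ref{r-2}), so the induced limiting chain is exactly $\bb X^{(p)}_t$ and the functional $\ms I^{(p)}$ of Theorem \ref{mt1} coincides with $\ms J^{(p)}$ of \eqref{o-83b}. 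Hypothesis (H2) is immediate from the structure of $\bb X^{(p)}_t$: its states are either absorbing (indices of minima lying in $\mc M_{p+1}$) or transient, so each closed irreducible class is a singleton, $\mf u_p=\mf n_{p+1}$, and $\ms V^{p+1,m}_n=\ms V^{p,j}_n$ for the unique $j$ with $m_{p,j}=m_{p+1,m}$; hypothesis (H3) holds because $\bb X^{(\mf q)}_t$, whose states index the global minima, is irreducible. Hypothesis (H4b) follows from $m_j$ being a nondegenerate local minimum of a $\ms C^2$ potential: a small Euclidean ball around $m_j$ is contained in the sublevel component $\mc W(m_j)$, and such balls are pairwise disjoint for $\varepsilon$ small; (H4a) is a Laplace-type estimate, $\pi_n$ concentrating on $\bigcup_{j\in S_1}\mc W^{m_j}_n$ because every sublevel component below height $h$ contains a minimum (by (F6)) and $\pi_n$ conditioned to $\mc W^{m_j}_n$ concentrates at its unique minimiser $m_j$.

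The quantitative step, which I expect to be the main obstacle, is hypothesis (H5). Here I would use the crude path bound of Lemma \ref{l04}: since the jump rates of $Y^{(n)}_t$ are $n\,R_n$, that lemma furnishes (H5) for $Y^{(n)}_t$ with $\beta_n=\frac{2}{n}\,({\rm diam}\,\ms V^{1,j}_n)\,\max_{(x,y)\in\ms E}\big(\pi^{1,j}_n(x)\,R_n(x,y)\big)^{-1}$. The graph diameter of $\ms V^{1,j}_n$ is $O(n)$, and for a neighbouring pair $x,y$ in $\mc W^{m_j}_n$ one has $\pi^{1,j}_n(x)R_n(x,y)=e^{-\frac n2(F(x)+F(y))}\big/\sum_{z\in\mc W^{m_j}_n}e^{-nF(z)}\ge e^{\,n(F(m_j)-\frac12(F(x)+F(y)))}\ge e^{-n\varepsilon}$, since $F<F(m_j)+\varepsilon$ throughout $\mc W^{m_j}_n$; hence $\beta_n\le C\,e^{n\varepsilon}$ for some finite $C$. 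By the choice of $\varepsilon$, namely $\max_kF(m_k)+\varepsilon<h$, one has $\varepsilon<h-\max_kF(m_k)=d_1$, so $\beta_n\prec e^{d_1n}$, which is exactly (H5). With (H0)--(H5) in force, Theorem \ref{mt1} (in the form of Remark \ref{rm-1}) gives $\ms K_n=\ms J^{(0)}+\sum_{p=1}^{\mf q}e^{-d_pn}\,\ms J^{(p)}$; dividing by $n$ and prefixing the $\Gamma$-limit $\ms J$ from Lemma \ref{lem:zero-expansion} yields \eqref{eq:expansion-RWPF}. Finally, Definition \ref{def1}(a) holds because $1\prec n\prec n e^{d_1n}\prec\cdots\prec n e^{d_{\mf q}n}$ as $d_1<\cdots<d_{\mf q}$; Definition \ref{def1}(c) is the nesting $\ms J^{(0)}<\infty\iff\ms J=0$ established above together with Lemma \ref{l39} applied to $Y^{(n)}_t$; and Definition \ref{def1}(d) follows from Remark \ref{rm2}, the zero-level set of $\ms J^{(\mf q)}$ being the single measure $\sum_{j\in S_{\mf q}}M^{(\mf q)}_1(j)\,\delta_{m_{\mf q,j}}$, since each $\pi^{(p)}_j$ reduces to $\delta_{m_{p,j}}$ because the valleys never merge.
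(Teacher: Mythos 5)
Your proposal is correct and follows essentially the same route as the paper: the two direct $\Gamma$-limits (Lemma \ref{lem:zero-expansion} and Proposition \ref{pr:step-1-RW}), followed by verification of (H0)--(H5) for the chain speeded up by $n$, so that Theorem \ref{mt1} (in the form of Remark \ref{rm-1}) supplies the remaining terms of the expansion. Your explicit estimate for (H5) is more detailed than the paper's one-line appeal to Lemma \ref{l04}; note only that the lower bound on $\pi^{1,j}_n(x)\,R_n(x,y)$ should carry a harmless polynomial factor coming from the normalisation $\sum_{z}e^{-nF(z)}\le C\,n^d\,e^{-nF(m_j)}$, which does not affect the conclusion $\beta_n\prec e^{d_1 n}$.
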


\subsection{$\Gamma$-convergence of $\ms I_n$}
\label{sec6.1}

In this subsection, we prove the first step in the expansion, as
stated in the next result.

\begin{lemma}
\label{lem:zero-expansion}
Assume that conditions {\rm (F1)} and {\rm (F2)} are in force. Then,
$\ms I_n$ $\Gamma$-converges to $\ms J$.
\end{lemma}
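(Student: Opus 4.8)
The plan is to turn $\ms I_n$ into a fully explicit expression and then read off both $\Gamma$-inequalities from it. Starting from the Dirichlet-form representation \eqref{f6} and using reversibility in the form $\pi_n(x)\,R_n(x,y)=Z_{F,n}^{-1}\,e^{-\frac n2(F(x)+F(y))}$, the summand factorises as
\begin{equation}
\label{e:zexp0}
\pi_n(x)\,R_n(x,y)\,\Big[\sqrt{\tfrac{\mu(y)}{\pi_n(y)}}-\sqrt{\tfrac{\mu(x)}{\pi_n(x)}}\,\Big]^2 \;=\; \mu(x)\,R_n(x,y)+\mu(y)\,R_n(y,x)-2\sqrt{\mu(x)\,\mu(y)}\;.
\end{equation}
Summing \eqref{e:zexp0} over $x,y$ and using that on the torus the shift $x\mapsto x+e_i/n$ is a bijection of $\Xi_n$, one obtains, for every $\mu\in\ms P(\Xi_n)$,
\begin{equation}
\label{e:zexp1}
\ms I_n(\mu)\;=\;\sum_{x\in\Xi_n}\lambda_n(x)\,\mu(x)\;-\;2\sum_{i=1}^d\sum_{x\in\Xi_n}\sqrt{\mu(x)\,\mu(x+e_i/n)}\;,
\end{equation}
where $\lambda_n(x)=\sum_{i=1}^d\big\{e^{-\frac n2[F(x+e_i/n)-F(x)]}+e^{-\frac n2[F(x-e_i/n)-F(x)]}\big\}$ is the holding rate at $x$. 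By condition (F1), a second-order Taylor expansion gives $-\frac n2[F(x\pm e_i/n)-F(x)]=\mp\tfrac12\partial_i F(x)+O(1/n)$ uniformly in $x$ and $i$, so that $\lambda_n(x)=\sum_{i=1}^d 2\cosh\big(\tfrac12\partial_i F(x)\big)+O(1/n)=\ms G(x)+2d+O(1/n)$ uniformly on $\Xi_n$ (recall $2\cosh t-2=4\sinh^2(t/2)$ and that $\nabla F$ is bounded). Consequently there is $C_F<\infty$ with
\begin{equation}
\label{e:zexp2}
\Big|\,\ms I_n(\mu)-\int_\Xi\ms G\,d\mu-\mathsf R_n(\mu)\,\Big|\;\le\;\frac{C_F}{n}\,,\qquad \mathsf R_n(\mu):=2d-2\sum_{i=1}^d\sum_{x\in\Xi_n}\sqrt{\mu(x)\,\mu(x+e_i/n)}\,,
\end{equation}
for all $n$ and all $\mu\in\ms P(\Xi_n)$, and, since $\sum_x\mu(x+e_i/n)=1$, the arithmetic--geometric mean inequality yields $\mathsf R_n(\mu)\ge 0$.

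With \eqref{e:zexp2} the \emph{$\Gamma$-liminf} is immediate: given $\mu_n\to\mu$ in $\ms P(\Xi)$, I may assume $\mu_n\in\ms P(\Xi_n)$ for all $n$ (otherwise the $\liminf$ is $+\infty$), and dropping the nonnegative term $\mathsf R_n(\mu_n)$ gives $\ms I_n(\mu_n)\ge\int_\Xi\ms G\,d\mu_n-C_F/n$; since $\ms G$ is continuous and bounded on the compact set $\Xi$ and $\mu_n\to\mu$ weakly, $\int_\Xi\ms G\,d\mu_n\to\int_\Xi\ms G\,d\mu=\ms J(\mu)$, hence $\liminf_n\ms I_n(\mu_n)\ge\ms J(\mu)$.

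For the \emph{$\Gamma$-limsup} I would first treat $\mu$ with a strictly positive smooth density $\rho\in\ms C^\infty(\Xi)$: set $\mu_n(x):=\rho(x)/c_n$, $c_n:=\sum_{x\in\Xi_n}\rho(x)$, so $\mu_n\in\ms P(\Xi_n)$ and $\mu_n\to\mu$ weakly. Since $\rho$ is bounded away from $0$, one has $\sqrt{\rho(x)\rho(x+e_i/n)}=\rho(x)+O(1/n)$ uniformly and $c_n\ge(\min\rho)\,|\Xi_n|$, so $c_n^{-1}\sum_x\sqrt{\rho(x)\rho(x+e_i/n)}=1+O(1/n)$ and therefore $\mathsf R_n(\mu_n)\to 0$; combined with \eqref{e:zexp2} and $\int_\Xi\ms G\,d\mu_n\to\ms J(\mu)$ this gives $\ms I_n(\mu_n)\to\ms J(\mu)$, in particular $\limsup_n\ms I_n(\mu_n)\le\ms J(\mu)$. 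To drop the smoothness and positivity I would invoke the standard facts that the $\Gamma$-limsup envelope $\overline{\ms I}$ of $(\ms I_n)$ is lower semicontinuous on the metrizable space $\ms P(\Xi)$, that measures with strictly positive smooth density are weakly dense (mollify $\mu$ by the heat kernel on $\bb T^d$), and that $\ms J$ is weakly continuous (again because $\ms G$ is bounded and continuous); then, for approximants $\mu^{(k)}\to\mu$ of this type, $\overline{\ms I}(\mu)\le\liminf_k\overline{\ms I}(\mu^{(k)})\le\liminf_k\ms J(\mu^{(k)})=\ms J(\mu)$, which is the $\Gamma$-limsup inequality for an arbitrary $\mu$.

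All the substance lies in the short identity \eqref{e:zexp0}--\eqref{e:zexp2}, after which both halves are essentially one line each; I do not anticipate a real obstacle. The only point needing a little care is the density reduction in the \emph{$\Gamma$-limsup}, which is routine $\Gamma$-convergence bookkeeping. Note that only condition (F1) is actually used here --- for the uniform Taylor estimate and for continuity and boundedness of $\ms G$ --- while (F2) plays no role at this first step of the expansion.
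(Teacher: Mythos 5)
Your argument is correct, and the $\Gamma$-liminf half is essentially the paper's: plugging $u_n=e^{nF/2}$ into the variational formula \eqref{f4}, as the paper does, produces exactly the lower bound $\sum_x\mu_n(x)\,[\lambda_n(x)-2d]=\int\ms G\,d\mu_n+O(1/n)$ that you obtain by discarding the nonnegative remainder $\mathsf R_n$ in your explicit decomposition of \eqref{f6}; your version merely makes the discarded term visible. The $\Gamma$-limsup, however, follows a genuinely different route. The paper reduces to Dirac masses $\delta_x$ (by convexity of $\ms I_n$, linearity and continuity of $\ms J$, and density of convex combinations of Diracs) and builds the recovery sequence $\mu_n\propto e^{-nV}$ from a quadratic-like potential $V$ centered at $x$, checking that the two Dirichlet forms cancel up to $\ms G(x)$. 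You instead reduce to measures with smooth strictly positive densities, use the grid restriction of the density itself as recovery sequence, and observe that the cross term $\mathsf R_n$ is $O(1/n)$ because $\sqrt{\rho(x)\rho(x+e_i/n)}=\rho(x)+O(1/n)$ when $\rho$ is bounded below; the general case then follows from lower semicontinuity of the $\Gamma$-limsup envelope together with weak continuity of $\ms J$. Both reductions are legitimate instances of the same relaxation principle. The paper's choice of Dirac recovery sequences is the one that generalizes to the next order of the expansion (where the limit measures are supported on critical points and the tilt $e^{-nV}$ is sharpened to a Gaussian profile), while your route is more self-contained at this zeroth order and isolates the exact identity $\ms I_n(\mu)=\sum_x\lambda_n(x)\mu(x)-2\sum_{i,x}\sqrt{\mu(x)\mu(x+e_i/n)}$, which is essentially the same expansion the paper redoes later in Lemma \ref{lem:finite-energy}. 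Your closing observation that only (F1) is used is accurate for your proof; the paper's statement also lists (F2), which is not needed at this step either.
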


\begin{proof}
We prove the \emph{$\Gamma$-liminf} and \emph{$\Gamma$-limsup} inequalities
separately.
	
\smallskip\noindent
\emph{$\Gamma$-liminf}. 
Fix $\mu \in \ms P(\Xi)$,
and let $\mu_n \in \ms P(\Xi_n)$ be a sequence such that
$\mu_n \to\mu$.  For $u_n:= e^{\frac{n}{2}F}$, we have, for all
$x\in \Xi_n$,
\begin{equation*}
-\frac{\ms L_n u_n(x)}{u_n(x)}
\;=\; \sum_{i=1}^d \left\{e^{-\frac{n}{2}\left(F(x+\frac{e_i}{n})-F(x)
  \right)}
+e^{-\frac{n}{2}\left(F(x-\frac{e_i}{n})-F(x)\right)}-2	\right\}\ .
\end{equation*}
By the smoothness of $F$ in (F1), recalling the definition of $\ms G$
in \eqref{eq:G-cosh}, we get
\begin{equation*}
\sup_{x\in \Xi_n}\Big|\, \frac{-\ms L_n u_n(x)}{u_n(x)}-\ms G(x)
\, \Big | \;=\; o(1)\ .
\end{equation*}
Hence, plugging such a $u_n$ into the variational formulation of
$\ms I_n$ in \eqref{f4}, we obtain
\begin{equation*}
\label{eq:liminf-step0}
\liminf_{n\to \infty} \ms I_n(\mu_n)
\ge \liminf_{n\to \infty} \sum_{x\in \Xi_n}\mu_n(x)\, \ms G(x)= \ms J(\mu)\ ,
\end{equation*}
where the second step follows by $\mu_n\to \mu$ and
$\ms G\in \ms C(\Xi)$. This concludes the proof of the
\emph{$\Gamma$-liminf}.
	
\smallskip\noindent
\emph{$\Gamma$-limsup}.
It suffices to prove
the $\limsup$ inequality only for Dirac measures. Indeed, the set of
finite convex combinations of Dirac masses is dense in $\ms P(\Xi)$,
as well as $\ms I_n$ is convex and $\ms J$ is linear and of the form
$\ms J(\mu)=\mu\, \ms G$ with $\ms G\in \ms C(\Xi)$.

Therefore, fix $x \in \Xi$, let $\mu=\delta_x$, and consider a
quadratic-like potential centered at $x$ satisfying the following
properties: ${\color{blue}V}\in \ms C^2(\Xi)$ and such that
	\begin{equation}\label{eq:H-lb}
		V(y)\ge 
		\frac{\Vert y-x\Vert^2}{2} \wedge 1\, ,\qquad y \in \Xi\, .
	\end{equation}
	As a consequence of these requirements, we have $V(y)=0$ iff $y=x$, and $\nabla V(x)=0$. 
	Further, it is not difficult to check that
	\begin{align}\label{eq:weak-conv-zero}
		\mu_n:= \frac{e^{-n V}}{\sum e^{-n V}} \xrightarrow{n\to \infty} \mu=\delta_x\, . 
	\end{align}  Indeed, 
	for every $\phi\in \ms C(\Xi)$ and $\delta \in (0,1)$, 
	\begin{align*}
		\left|\langle \mu_n,\phi\rangle -\langle \delta_x,\phi\rangle \right|&\le \sup_{\substack{y\in \Xi\\
				\Vert
          y-x\Vert<\delta}}\left|\phi(y)-\phi(x)\right|+
          2\|\phi\|_\infty  \frac{\sum_{\Vert y- x\Vert>\delta}e^{-n
          V(y)}}{\sum_{y\in \Xi_n}e^{-n V(y)}}\;,
\end{align*}
which, by the continuity of $\phi$ at $x\in \Xi$ and \eqref{eq:H-lb},
vanishes taking first $n\to \infty$, and then $\delta \to 0$. As for
the convergence of the rate functionals along $(\mu_n)_n$, \eqref{f6} and
a simple computation yield
	\begin{align*}
		\ms I_n(\mu_n)  &= \sum_{y\in \Xi_n} \mu_n(y) \sum_{i=1}^d \left\{ e^{-\frac{n}{2}\left(F(y+\frac{e_i}{n})-F(y) \right)}+e^{-\frac{n}{2}\left(F(y-\frac{e_i}{n})-F(y)\right)}-2 \right\}\\
		&\qquad-\sum_{y\in \Xi_n}\mu_n(y)\sum_{i=1}^d\left\{e^{-\frac{n}{2}\left(V(y+\frac{e_i}{n})-V(y)\right)}+e^{-\frac{n}{2}\left(V(y-\frac{e_i}{n})-V(y) \right)}-2 \right\}\\
		&= \sum_{y\in \Xi_n}\mu_n(y)\sum_{i=1}^d 2\left\{\cosh\left(\frac{1}{2}\partial_i F(y)\right)-1\right\}\\
		&\qquad- \sum_{y\in \Xi_n} \mu_n(y)\sum_{i=1}^d 2\left\{\cosh\left(\frac{1}{2}\partial_i V(y)\right)-1  \right\}+ O\left(\frac{1}{n} \right)\ .\end{align*}
	Since $\partial_iF, \partial_iV\in \ms C(\Xi)$, $i=1,\ldots, d$, and $\mu_n\to \mu=\delta_x$ (see \eqref{eq:weak-conv-zero}), 
	\begin{align*}
		\limsup_{n\to \infty}\ms I_n(\mu_n)=  \ms G(x)-\sum_{i=1}^d  2\left\{\cosh\left(\frac{1}{2}\partial_i V(x)\right)-1\right\} = \ms G(x)\, ,
	\end{align*}
	where in the last step we used that $\partial_i V(x)=0$ for all $i=1,\ldots, d$. 	This concludes the proof of the lemma.
\end{proof}

\subsection{$\Gamma$-convergence of $n\, \ms I_n$}
\label{sec6.1}

We turn to the second term of the expansion stated in the next result.

\begin{proposition}
\label{pr:step-1-RW}
Assume that conditions {\rm (F1) -- (F3)} are in force. Then,
$n\, \ms I_n$ $\Gamma$-converges to $\ms J^{(0)}$ given in
\eqref{eq:I0}. 
\end{proposition}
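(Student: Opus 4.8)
The plan is to prove the \emph{$\Gamma$-liminf} and \emph{$\Gamma$-limsup} inequalities separately, using throughout the identity that, for $G\in\ms C^{2,1}(\Xi)$ and $u_n=e^{\frac n2(F-G)}$,
\begin{equation*}
-\,\frac{\ms L_n u_n(x)}{u_n(x)}\;=\;\mc E^F_n(x)-\mc E^G_n(x),
\qquad
\mc E^G_n(x):=\sum_{i=1}^d\sum_{\pm}e^{-\frac n2\left(G(x\pm e_i/n)-G(x)\right)},
\end{equation*}
together with the Taylor expansion $\mc E^G_n(x)=2d+\ms G_G(x)-\tfrac1{2n}\Lambda_G(x)+O(n^{-2})$, uniform in $x$, where $\ms G_G(x):=\sum_i2\{\cosh(\tfrac12\partial_iG(x))-1\}$ and $\Lambda_G(x):=\sum_i\partial_{ii}G(x)\cosh(\tfrac12\partial_iG(x))$, so that $\ms G_F=\ms G$ and $\mc E^0_n\equiv2d$. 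Since $\ms I_n$ $\Gamma$-converges to $\ms J$ (Lemma \ref{lem:zero-expansion}) and $\ms G$ is continuous, nonnegative, with zero set $\mc P$, any $\mu$ with $\mu(\mc P)<1$ satisfies $\ms J(\mu)>0$, hence $\liminf_n n\ms I_n(\mu_n)=+\infty=\ms J^{(0)}(\mu)$ along every $\mu_n\to\mu$, while the \emph{$\Gamma$-limsup} at such $\mu$ is vacuous; so in both inequalities only $\mu=\sum_{z\in\mc P}\omega_z\delta_z$, $\omega\in\ms P(\mc P)$, need be treated ($\mc P$ is finite by (F2)). For $z\in\mc P$ write $H_z:=\mathrm{Hess}\,F(z)$, with eigenvalues $\xi_i(z)\neq0$ by (F3), and $|H_z|$ for the symmetric positive-definite matrix with the same eigenvectors and eigenvalues $|\xi_i(z)|$; recall $\zeta(z)=\tfrac12\,\mathrm{tr}(|H_z|-H_z)$.

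For the \emph{$\Gamma$-limsup}, $n\ms I_n$ is convex and $\ms J^{(0)}$ linear, so it suffices to exhibit a recovery sequence for each $\delta_z$. I would take $\mu_n:=e^{-nV_z}/\sum_{\Xi_n}e^{-nV_z}$ with $V_z\in\ms C^2(\Xi)$, $V_z\ge0$, $V_z(y)=0$ iff $y=z$, and $\mathrm{Hess}\,V_z(z)=|H_z|$. A standard Laplace estimate gives $\mu_n\to\delta_z$, $\mb E_{\mu_n}[|y-z|^2]=O(1/n)$, and $\mu_n(|y-z|\ge\delta)$ exponentially small for each $\delta>0$. Since for this $\mu_n$ the maximizer in \eqref{f4} is $u=\sqrt{\mu_n/\pi_n}$, the identity above with $G=V_z$ yields $\ms I_n(\mu_n)=\sum_y\mu_n(y)\{\mc E^F_n(y)-\mc E^{V_z}_n(y)\}$ exactly; multiplying by $n$ and inserting the Taylor expansion, $n\ms I_n(\mu_n)=n\sum_y\mu_n(y)\{\ms G(y)-\ms G_{V_z}(y)\}-\tfrac12\sum_y\mu_n(y)\{\Lambda_F(y)-\Lambda_{V_z}(y)\}+O(1/n)$. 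The bounded middle term converges to $-\tfrac12\{\Lambda_F(z)-\Lambda_{V_z}(z)\}=\tfrac12\,\mathrm{tr}(|H_z|-H_z)=\zeta(z)$, using $\nabla F(z)=\nabla V_z(z)=0$; the first term vanishes because $\ms G-\ms G_{V_z}$ is bounded and, by a direct computation ($\mathrm{Hess}\,\ms G_G(z)=\tfrac12(\mathrm{Hess}\,G(z))^2$ at a critical point $z$, and $|H_z|^2=H_z^2$), vanishes to second order at $z$, so that term is $\le\varepsilon\,n\,\mb E_{\mu_n}[|y-z|^2]+o(1)$ for every $\varepsilon>0$. Hence $\limsup_n n\ms I_n(\mu_n)=\zeta(z)=\ms J^{(0)}(\delta_z)$.

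For the \emph{$\Gamma$-liminf}, fix $\mu=\sum_z\omega_z\delta_z$, $\mu_n\to\mu$, and $\varepsilon>0$, and build a single test function. For $\eta\in(0,1)$ small set $u_n:=e^{\frac n2(F-\hat G)}$ with $\hat G:=\sum_{z\in\mc P}\hat G_z$, where $\hat G_z\in\ms C^\infty(\Xi)$ is supported in $B_{2\rho_\eta}(z)$ and equals $\tfrac{1-\eta}2(y-z)^\top|H_z|(y-z)$ on $B_{\rho_\eta}(z)$; here $\rho_\eta>0$ is so small that the balls $B_{2\rho_\eta}(z)$ are disjoint and free of other critical points, $|\nabla\hat G_z|=O(\rho_\eta)$ on the annuli $\rho_\eta\le|y-z|\le2\rho_\eta$, and — since $H_z^2$ is positive-definite — $\ms G(y)-\ms G_{\hat G}(y)=\tfrac14(y-z)^\top\!\big(H_z^2-(1-\eta)^2H_z^2\big)(y-z)+o(|y-z|^2)\ge0$ on $B_{\rho_\eta}(z)$. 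On $B_{\rho_\eta}(z)$, the identity and Taylor expansion (with continuity of the coefficient $\Lambda$) give $-n\,\ms L_n u_n(y)/u_n(y)=n\{\ms G-\ms G_{\hat G}\}(y)+\tfrac12\,\mathrm{tr}(|H_z|-H_z)-\tfrac\eta2\,\mathrm{tr}|H_z|+O(\rho_\eta)+O(1/n)\ge\zeta(z)-\varepsilon$ once $\eta$ (hence $\rho_\eta$) is small and $n$ large; off $\bigcup_z B_{\rho_\eta}(z)$, where $\ms G$ is bounded below by a positive constant while $\ms G_{\hat G}=O(\rho_\eta^2)$, the same expansion gives $-n\,\ms L_n u_n/u_n\ge0$ for $n$ large. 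Therefore $n\ms I_n(\mu_n)\ge-\,n\int\frac{\ms L_n u_n}{u_n}\,d\mu_n\ge\sum_z(\zeta(z)-\varepsilon)\,\mu_n(B_{\rho_\eta}(z))$; letting $n\to\infty$ (with $\mu_n(B_{\rho_\eta}(z))\to\omega_z$ since $\mu(\partial B_{\rho_\eta}(z))=0$) and then $\varepsilon\to0$ yields $\liminf_n n\ms I_n(\mu_n)\ge\sum_z\omega_z\zeta(z)=\ms J^{(0)}(\mu)$.

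The hard part is the \emph{$\Gamma$-liminf}: the naive test function $e^{\frac n2(F-G_z)}$ with $\mathrm{Hess}\,G_z(z)=|H_z|$ fails on any fixed ball, because then the quadratic parts of $\ms G$ and $\ms G_{G_z}$ coincide and the subleading (cubic and finer) corrections — only $o(|y-z|^2)$ under (F1), with uncertain sign — dominate the $O(1/n)$ term on the intermediate range $n^{-1/2}\ll|y-z|\lesssim1$. The damping factor $1-\eta$ forces the quadratic part of $\ms G-\ms G_{\hat G}$ to be genuinely positive-definite on a fixed small ball, which absorbs all lower-order contributions there at the cost of an $O(\eta)$ error; this is why $n\to\infty$ must be taken before $\eta,\varepsilon\to0$. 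The Laplace-type facts invoked in the \emph{$\Gamma$-limsup} are routine and I would record them without full proofs.
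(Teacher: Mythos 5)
Your \emph{$\Gamma$-limsup} is correct and takes a genuinely different, more direct route than the paper's. You exploit that for $\mu_n\propto e^{-nV_z}$ the supremum in \eqref{f4} is attained at $u=\sqrt{\mu_n/\pi_n}=e^{\frac n2(F-V_z)}$ (this is \eqref{f6}), so $\ms I_n(\mu_n)=\sum_y\mu_n(y)\{\mc E^F_n(y)-\mc E^{V_z}_n(y)\}$ exactly; choosing $\mathrm{Hess}\,V_z(z)=|H_z|$ makes the quadratic parts of $\ms G$ and $\ms G_{V_z}$ cancel (via $|H_z|^2=H_z^2$), and $\zeta(z)$ emerges from the $1/n$-correction $\Lambda$. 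The paper instead uses a recovery sequence $\propto e^{-n(F-K)}\varphi_n^2$ with a cutoff at scale $\varepsilon_n$ and a Gaussian computation; your version avoids the cutoff at the cost of routine Laplace estimates ($n\,\mb E_{\mu_n}[|y-z|^2]=O(1)$ and exponential smallness of $\mu_n(|y-z|\ge\delta)$), which is a fair trade.

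The \emph{$\Gamma$-liminf}, however, has a genuine gap on the transition annuli $\rho_\eta\le|y-z|\le 2\rho_\eta$. You claim that off $\cup_zB_{\rho_\eta}(z)$ one has $\ms G\ge c>0$ while $\ms G_{\hat G}=O(\rho_\eta^2)$, hence a nonnegative integrand. But the infimum of $\ms G$ over the complement of the inner balls is itself only of order $\rho_\eta^2$ (it is attained at distance $\rho_\eta$ from a critical point, where $|\nabla F|\asymp\rho_\eta$), and any smooth $\hat G_z$ that decays from size $\asymp\rho_\eta^2$ to $0$ over a shell of width $\rho_\eta$ necessarily has $|\nabla\hat G_z|\asymp\rho_\eta$ there, so $\ms G_{\hat G}\asymp\rho_\eta^2$ as well, with a constant involving $\|H_z\|$ and the cutoff profile. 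These two quantities of order $\rho_\eta^2$ need not be ordered favourably (e.g.\ when $H_z$ has large condition number), so $\ms G-\ms G_{\hat G}$ can be $\le -c\,\rho_\eta^2$ on the annulus; multiplied by $n$ this destroys the pointwise lower bound, and since you never establish a priori control of $n\,\mu_n(\text{annulus})$ (the analogue of the paper's Lemma \ref{lem:finite-energy}), the annulus contribution can diverge to $-\infty$. Two repairs: (i) replace the compactly supported cutoff by a capped quadratic $\hat G_z=\phi\circ q_z$ with $q_z=\frac{1-\eta}2(y-z)^{\sf T}|H_z|(y-z)$ and $\phi$ concave, $0\le\phi'\le1$, so that $|\nabla\hat G_z|\le(1-\eta)\,\big|\,|H_z|(y-z)\big|=(1-\eta)|H_z(y-z)|$ pointwise and hence $\ms G-\ms G_{\hat G}\ge\frac\eta4|H_z(y-z)|^2-C|y-z|^3\ge 0$ on a fixed small ball, outside of which $\hat G$ is locally constant and the integrand is $n\ms G-O(1)\ge 0$; or (ii) restrict, as the paper does, to sequences with $\liminf_n n\,\ms I_n(\mu_n)<\infty$ and first prove the bound $n\,\mu_n(\Xi_n\setminus\cup_z Q^n_\varepsilon(z))=O(1)$, which makes the annulus contribution $O(\rho_\eta^2)$. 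With either repair, your $1-\eta$ damping — which neatly replaces the paper's third-moment estimate and the cancellation $\mathrm{Hess}\,F(z)\,H=H^2$ — does go through.
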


In what follows, for all $x \in \Xi$ and $\varepsilon \in (0,1)$, let
$\color{blue}Q_\varepsilon(x)$ denote the $d$-dimensional cube of size
$\varepsilon$ centered at $x$. Moreover, define the discretizations of
such cubes
${\color{blue}Q_\varepsilon^n(x)}:= Q_\varepsilon(x)\cap \Xi_n$, and
their discrete inner boundary:
\begin{equation*}
	{\color{blue}
	\partial Q_\varepsilon^n(x)}:=\left\{y\in Q_\varepsilon^n(x): \text{there exists}\ y'\in \Xi_n\setminus Q_\varepsilon^n(x)\ \text{such that}\ |y-y'|=1/n \right\}\ .	
\end{equation*}
Finally, set
${\color{blue}\mathring
Q^n_{\varepsilon}(x)}:=Q^n_{\varepsilon_n}(x)\setminus \partial
Q^n_{\varepsilon_n}(x)$.
 
\begin{lemma}
\label{lem:finite-energy}
Assume that conditions {\rm (F1) -- (F3)} are in force, and that
$\mu_n\in \ms P(\Xi_n)$ satisfies
\begin{equation}\label{eq:finite-energy}
\liminf_{n\to\infty} \, n\, \ms I_n(\mu_n) \;<\; \infty\;.
\end{equation}
Then, for every $\varepsilon \in (0,1)$ small enough, we have
\begin{align}
\label{eq:claim1}
\limsup_{n\to \infty} n \, \mu_n\Big(\Xi_n\setminus \bigcup_{z\in \ms
P}Q^n_\varepsilon(z)\Big)<\infty\ ,
\end{align}
and
\begin{align}
\label{eq:claim2}
\limsup_{n\to \infty} n\sum_{z\in \ms P}\sum_{x\in
Q^n_\varepsilon(z)}\mu_n(x)\, |x-z|^2<\infty\ .
\end{align}

\begin{proof}
Letting $\color{blue}\sum_{xy}$ stand for the summation over unordered
pairs of nearest-neighbours $x, y$ of $\Xi_n$, as well as
${\color{blue}\nabla^n_{xy}F}:=n\left(F(y)-F(x)\right)$ and
${\color{blue}\nabla^n_i
F(x)}:=n\left(F(x+\frac{e_i}{n})-F(x)\right)$, by \eqref{f6},
expanding the square,
\begin{align*}
n\, \ms I_n(\mu_n)&= n\sum_{xy}e^{-\frac{n}{2}\left(F(x)+F(y)\right)}
\left(\sqrt{\mu_n(x)}\,e^{\frac{n}{2}F(x)}-\sqrt{\mu_n(y)}\,
e^{\frac{n}{2}F(y)}\right)^2\\ 
&=n\sum_{x\in \Xi_n}\mu_n(x)\sum_{i=1}^d
\Big(e^{-\frac{1}{2}\nabla^n_i F(x)}+e^{\frac{1}{2}\nabla^n_i
F(x-\frac{e_i}{n})}\Big)-2n\sum_{xy}\sqrt{\mu_n(x)\mu_n(y)}\\ 
&\ge n\sum_{x\in \Xi_n}\mu_n(x)
\sum_{i=1}^d2\cosh\left(\frac{1}{2}\partial_i F(x)\right)-
2n\sum_{xy}\sqrt{\mu_n(x)\mu_n(y)} - C\ ,
\end{align*}
where $C\ge0$ accounts for an error arising when replacing
$\nabla^n_i F(x)$ and $\nabla^n_i F(x-\frac{e_i}{n})$ by
$\partial_i F(x)$.  By Young's inequality $2ab\le a^2+b^2$ and
$\sum_{x\in \Xi_n}\mu_n(x)=1$, we obtain
\begin{align*}
2n\sum_{xy}\sqrt{\mu_n(x)\mu_n(y)}\le 2dn\ .
\end{align*}
Hence, recalling the definition of $\ms G$ from \eqref{eq:G-cosh}, we
further get
\begin{align*}
n\, \ms I_n(\mu_n)&\ge 2dn+ n\sum_{x\in
\Xi_n}\mu_n(x)\sum_{i=1}^d2\left(\cosh\left(
\frac{1}{2}\partial_iF(x)\right)-1\right)- 2dn-C\\ 
&= n\sum_{x\in \Xi_n}\mu_n(x)\, \ms G(x) - C\ .
\end{align*}
Since
$\ms G(x)\ge \sum_{i=1}^d \left(\frac{1}{2}\partial_i F(x)\right)^2\ge
0$, the assumption in \eqref{eq:finite-energy} implies that
\begin{align*}
\limsup_{n\to \infty}n\sum_{x\in \Xi_n}\mu_n(x)\, |\nabla
F(x)|^2<\infty\ .
\end{align*}

	By assumptions (F1) and (F2), for every $\varepsilon\in (0,1)$ for which $\Xi_n\setminus \bigcup_{z\in \ms P} Q^n_\varepsilon(z)$ is non-empty, there exists $c=c(F)>0$ satisfying
	\begin{align*}
		\inf_{x\notin\bigcup_{z\in \ms P} Q^n_\varepsilon(z)} \ms G(x)\ge c>0\ .
	\end{align*}
	This proves the  claim in \eqref{eq:claim1}.  As for the  claim in \eqref{eq:claim2}, we observe that, by the non-degeneracy of $F$ around the critical points, i.e., (F3), for every $z\in \ms P$, there exists $b_z=b_z(F)>0$ such that, for all $\varepsilon \in (0,1)$ small enough so that $Q^n_\varepsilon(z)\cap Q^n_\varepsilon(z')=\varnothing$ for $z\neq z'\in \ms P$,	
	\begin{align*}
		|\nabla F(x)|^2\ge b_z\,|x-z|^2\ ,\qquad |x-z|<\varepsilon\ .
	\end{align*}
	This concludes the proof of the lemma.
\end{proof}

\end{lemma}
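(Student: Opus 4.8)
The plan is to deduce everything from the Dirichlet-form representation \eqref{f6} of $\ms I_n$, extracting from \eqref{eq:finite-energy} a uniform control on the $\mu_n$-mass of $|\nabla F|^2$, and then reading off both claims from the local behaviour of $\nabla F$ prescribed by (F2)--(F3). With $f_n=\mu_n/\pi_n$ and the explicit formulas $\pi_n(x)=Z_{F,n}^{-1}e^{-nF(x)}$, $R_n(x,y)=e^{-\frac n2(F(y)-F(x))}$, the partition function cancels and one gets
\[
\ms I_n(\mu_n)\;=\;\sum_{xy} e^{-\frac n2(F(x)+F(y))}\big(\sqrt{\mu_n(x)}\,e^{\frac n2 F(x)} - \sqrt{\mu_n(y)}\,e^{\frac n2 F(y)}\big)^2 ,
\]
the sum over unordered nearest-neighbour bonds of $\Xi_n$. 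Expanding the square and reindexing the two diagonal terms bond by bond yields
\[
n\,\ms I_n(\mu_n)\;=\;n\sum_{x\in\Xi_n}\mu_n(x)\sum_{i=1}^d\big(e^{-\frac12\nabla^n_iF(x)}+e^{\frac12\nabla^n_iF(x-e_i/n)}\big)\;-\;2n\sum_{xy}\sqrt{\mu_n(x)\mu_n(y)} .
\]
By (F1) the difference quotients $\nabla^n_iF$ approximate $\partial_iF$ uniformly at rate $O(1/n)$, so the first sum equals $n\sum_x\mu_n(x)\sum_i 2\cosh(\tfrac12\partial_iF(x))$ up to a total error of order $1$; and by Young's inequality together with $\sum_x\mu_n(x)=1$ (each vertex having degree $2d$) the cross term is at most $2dn$. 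Since $\sum_i 2\cosh(\tfrac12\partial_iF(x))=\ms G(x)+2d$, the $2dn$'s cancel, leaving $n\,\ms I_n(\mu_n)\ge n\sum_{x\in\Xi_n}\mu_n(x)\,\ms G(x)-C$ for some finite $C=C(F)\ge 0$. Using $2(\cosh t-1)\ge t^2$ one has $\ms G(x)\ge\tfrac14|\nabla F(x)|^2$, whence, by \eqref{eq:finite-energy} (all $\limsup$'s understood along a subsequence along which $n\,\ms I_n(\mu_n)$ stays bounded),
\[
\limsup_{n\to\infty}\,n\sum_{x\in\Xi_n}\mu_n(x)\,|\nabla F(x)|^2\;<\;\infty .
\]

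It then remains to localise. By (F2), $\{\nabla F=0\}=\ms P$ is finite, so for $\varepsilon$ small enough that the cubes $Q_\varepsilon(z)$, $z\in\ms P$, are pairwise disjoint and none contains a critical point other than its centre, continuity of $\ms G$ on the compact $\Xi$ forces $\inf\{\ms G(x):x\notin\bigcup_{z\in\ms P}Q_\varepsilon(z)\}\ge c>0$; hence
\[
n\,\mu_n\Big(\Xi_n\setminus\bigcup_{z\in\ms P}Q^n_\varepsilon(z)\Big)\;\le\;c^{-1}\,n\sum_{x\in\Xi_n}\mu_n(x)\,\ms G(x)
\]
is bounded, which is \eqref{eq:claim1}. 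For \eqref{eq:claim2}, (F3) makes each ${\rm Hess}\,F(z)$ invertible, so a second-order Taylor expansion of $\nabla F$ at $z$ — with the remainder controlled by the $\ms C^{2,1}$-bound in (F1) — yields constants $b_z>0$ with $|\nabla F(x)|^2\ge b_z|x-z|^2$ for $|x-z|<\varepsilon$ (shrinking $\varepsilon$ if necessary); summing over $z$ and over $x\in Q^n_\varepsilon(z)$ gives $n\sum_z\sum_{x\in Q^n_\varepsilon(z)}\mu_n(x)|x-z|^2\le(\min_z b_z)^{-1}\,n\sum_x\mu_n(x)|\nabla F(x)|^2$, bounded by the previous display.

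The one step that needs care is the error bound in the inequality $n\,\ms I_n(\mu_n)\ge n\sum_x\mu_n(x)\ms G(x)-C$: one must verify that replacing the discrete gradients $\nabla^n_iF(x)$, $\nabla^n_iF(x-e_i/n)$ by $\partial_iF(x)$ produces, after multiplication by $n$ and summation against $\mu_n$, a \emph{total} error of order $1$ rather than of order $n$ — this is precisely where the $\ms C^{2,1}$-regularity in (F1) enters, the per-site discrepancy being $O(1/n)$ uniformly while $\sum_x\mu_n(x)=1$. The remaining ingredients — Young's inequality, the convexity lower bound for $\cosh$, and the compactness/non-degeneracy estimates from (F2)--(F3) — are routine.
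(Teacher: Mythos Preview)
Your proof is correct and follows essentially the same route as the paper: expand the Dirichlet form \eqref{f6}, bound the cross term by $2dn$ via Young's inequality, replace discrete by continuous gradients using (F1) to obtain $n\,\ms I_n(\mu_n)\ge n\sum_x\mu_n(x)\,\ms G(x)-C$, and then read off both claims from the behaviour of $\nabla F$ away from and near critical points. Your parenthetical remark that the $\limsup$'s should be taken along a subsequence where $n\,\ms I_n(\mu_n)$ stays bounded is a fair clarification of what the $\liminf$ hypothesis actually delivers.
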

We readily obtain the following corollary of Lemma
\ref{lem:finite-energy}.

\begin{corollary}
\label{cor:finite-energy}
	If $\limsup_{n\to \infty}n\, \ms I_n(\mu_n)<\infty$, then
	\begin{align*}
		\lim_{\varepsilon\to 0}\limsup_{n\to \infty}n \sum_{z\in \ms P}\sum_{x\in Q^n_\varepsilon(z)}\mu_n(x)\,  |x-z|^3=0\ .
	\end{align*}
\end{corollary}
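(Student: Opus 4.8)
The plan is to obtain the corollary as a quick consequence of the second-moment bound \eqref{eq:claim2} in Lemma \ref{lem:finite-energy}, gaining the extra factor that forces the $\varepsilon\to 0$ limit to vanish from the smallness of the cube $Q^n_\varepsilon(z)$ itself. The key elementary observation is that if $x\in Q^n_\varepsilon(z)$ then $|x-z|\le \tfrac12\sqrt d\,\varepsilon$, so that $|x-z|^3\le \tfrac12\sqrt d\,\varepsilon\,|x-z|^2$; summing against $\mu_n$ therefore bounds the third-moment sum by $\tfrac12\sqrt d\,\varepsilon$ times the second-moment sum.

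Concretely, I would first fix $\varepsilon_0\in(0,1)$ small enough that Lemma \ref{lem:finite-energy} applies and the cubes $Q_{\varepsilon_0}(z)$, $z\in\ms P$, are pairwise disjoint. Since the hypothesis $\limsup_{n\to\infty} n\,\ms I_n(\mu_n)<\infty$ implies in particular \eqref{eq:finite-energy}, estimate \eqref{eq:claim2} at $\varepsilon_0$ yields a finite constant $M:=\limsup_{n\to\infty} n\sum_{z\in\ms P}\sum_{x\in Q^n_{\varepsilon_0}(z)}\mu_n(x)\,|x-z|^2$. For every $\varepsilon\in(0,\varepsilon_0]$, using the inequality above together with $Q^n_\varepsilon(z)\subset Q^n_{\varepsilon_0}(z)$ and the non-negativity of the summands, one gets, for all $n\ge 1$,
\begin{equation*}
n\sum_{z\in\ms P}\sum_{x\in Q^n_\varepsilon(z)}\mu_n(x)\,|x-z|^3 \;\le\; \frac{\sqrt d\,\varepsilon}{2}\; n\sum_{z\in\ms P}\sum_{x\in Q^n_{\varepsilon_0}(z)}\mu_n(x)\,|x-z|^2\;.
\end{equation*}
Taking $\limsup_{n\to\infty}$ gives $\limsup_{n\to\infty} n\sum_{z\in\ms P}\sum_{x\in Q^n_\varepsilon(z)}\mu_n(x)\,|x-z|^3\le \tfrac12\sqrt d\,\varepsilon\,M$, and letting $\varepsilon\to 0$ the right-hand side tends to $0$; since the left-hand side is non-negative for every $\varepsilon$, the iterated limit equals $0$, which is the claim.

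The one point to be slightly careful about — and essentially the only obstacle — is that \eqref{eq:claim2} is stated, for each fixed small $\varepsilon$, with a bound that could a priori depend on $\varepsilon$, whereas the argument needs a bound uniform on $(0,\varepsilon_0]$. This is precisely what the step $Q^n_\varepsilon(z)\subset Q^n_{\varepsilon_0}(z)$ provides: the truncated second moment $\sum_{z\in\ms P}\sum_{x\in Q^n_\varepsilon(z)}\mu_n(x)\,|x-z|^2$ is non-decreasing in $\varepsilon$ because the cubes are nested and the summands non-negative, so it is controlled for all $\varepsilon\le\varepsilon_0$ by its value at $\varepsilon_0$, i.e.\ by the single constant $M$. (Equivalently, one could bound this truncated second moment by $(\min_{z\in\ms P} b_z)^{-1}\, n\sum_{x\in\Xi_n}\mu_n(x)\,|\nabla F(x)|^2$ via the non-degeneracy estimate $|\nabla F(x)|^2\ge b_z|x-z|^2$ near each $z\in\ms P$ already used in the proof of Lemma \ref{lem:finite-energy}, which is manifestly independent of $\varepsilon$.)
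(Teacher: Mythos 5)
Your proof is correct and is essentially the paper's own argument: the paper likewise bounds $|x-z|^3\le C(d)\,\varepsilon\,|x-z|^2$ on $Q^n_\varepsilon(z)$ and invokes \eqref{eq:claim2}. Your extra remark about uniformity in $\varepsilon$ via the nesting $Q^n_\varepsilon(z)\subset Q^n_{\varepsilon_0}(z)$ is a harmless (and slightly more careful) elaboration of the same step.
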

\begin{proof}
	For every $\varepsilon \in (0,1)$ and $n\in \bb N$, we have
	\begin{align*}
		n	\sum_{z\in \ms P}\sum_{x\in Q^n_\varepsilon(z)}\mu_n(x)\,  |x-z|^3\le C\, \varepsilon\, n	\sum_{z\in \ms P}\sum_{x\in Q^n_\varepsilon(z)}\mu_n(x)\, |x-z|^2\ ,
	\end{align*}
 for some $C=C(d)>0$.
The claim \eqref{eq:claim2} in Lemma \ref{lem:finite-energy} yields the desired result.
\end{proof}
We are finally ready to complete the proof of Proposition \ref{pr:step-1-RW}.
\begin{proof}[Proof of Proposition \ref{pr:step-1-RW}] We discuss the
proof of the \emph{$\Gamma$-liminf} and \emph{$\Gamma$-limsup}
inequalities separately.
	
\smallskip\noindent \emph{$\Gamma$-liminf}.
If $\mu\neq \sum_{z\in \ms P} \omega(z)\, \delta_z$, then $\ms J(\mu)>0$
and, by Lemma \ref{lem:zero-expansion}, the \emph{$\Gamma$-liminf}
inequality is automatically satisfied. Therefore, it remains to prove
the \emph{$\Gamma$-liminf} with
$\mu= \sum_{z\in \ms P}\omega(z)\, \delta_z$ and $\mu_n\to
\mu$. Moreover, since $\ms J^{(0)}(\mu)<\infty$ for such measures
$\mu$, without loss of generality we may further assume condition
\eqref{eq:finite-energy} for the sequence $\mu_n$.
	
Fix $\varepsilon\in (0,1)$ small enough for which
$Q_\varepsilon(z)\cap Q_\varepsilon(z')=\varnothing$ holds for all
$z\neq z'\in \ms P$. By \eqref{eq:claim1}, there exist
$C=C(\varepsilon)>0$ and a sequence
$\varepsilon_n \in (\varepsilon/2,\varepsilon)$ such that, for all
$n \in \bb N$ large enough,
\begin{align}
\label{eq:boundary-term}
n\,\mu_n\Big( \bigcup_{z\in \ms P}\partial
Q^n_{\varepsilon_n}(z)\Big)\le \frac{C}{n}\ .
\end{align}
\

Recall the definition of the restricted Dirichlet form $D_n(\ms A,h)$
from \eqref{eq:restricted-DF}, with $\ms A\subset \Xi$ and
$h:\Xi\to \bb R$. Hence, since $\{Q_{\varepsilon_n}(z)\}_{z\in \ms P}$
are disjoint subsets of $\Xi$, letting $f_n:=\mu_n/\pi_n$, we obtain
\begin{align*}
n\, \ms I_n(\mu_n)&= n\, D_n(\Xi_n,\sqrt{f_n}) \ge \sum_{z\in \ms P}
n\,\pi_n(Q_{\varepsilon_n}(z))\, D_n(Q_{\varepsilon_n}(z),\sqrt{f_n})\
.
\end{align*}
If $\color{blue}\mu_n^z:= \mu_n/\mu_n(Q_{\varepsilon_n}(z))$ denotes
the conditional measure $\mu_n$ on $Q^n_{\varepsilon_n}(z)$, and
$\color{blue}\ms I_n^z$ the level two large deviations rate functional
of the random walk reflected in $Q^n_{\varepsilon_n}(z)$ (which is
still reversible w.r.t.\ the conditional distribution
$\color{blue}\pi_n^z$ on $Q^n_{\varepsilon_n}(z)$), each term on the
right-hand side above further reads as follows:
\begin{align*}
& n\, \pi_n(Q_{\varepsilon_n}(z))\, D_n(Q_{\varepsilon_n}(z),\sqrt{f_n})\\
&= \frac{n}{2}\sum_{\substack{x,y \in Q^n_{\varepsilon_n}(z)\\
|x-y|=1/n}} e^{-\frac{n}{2}\left(F(x)+F(y)\right)}
\left(\,
e^{\frac{n}{2}F(x)}\sqrt{\mu_n(x)}-e^{\frac{n}{2}F(y)}\sqrt{\mu_n(y)}\,
\right)^2\\
&= \mu_n(Q_{\varepsilon_n}(z))\, n\, \ms I_n^z(\mu_n^z) \;,
\end{align*}
so that
\begin{align*}
n\, \ms I_n(\mu_n)\ge \sum_{z\in \ms P} 
\mu_n(Q_{\varepsilon_n}(z))\, n\, \ms I_n^z(\mu_n^z)\;.
\end{align*}
Thus, the \emph{$\Gamma$-liminf} inequality boils down to establishing
an estimate for each of the term on the right-hand side.  Fix
$z \in \ms P$ all throughout.

Letting $\color{blue}\ms L_n^z$ denote the infinitesimal generator of
a random walk reflected in $Q^n_{\varepsilon_n}(z)$, and observing
that $\ms L_n^z h(x) = \ms L_n h(x)$ for all
$x \in \mathring Q^n_{\varepsilon_n}(z)$ and
$h:Q^n_{\varepsilon_n}(z)\to \bb R$, the variational formulation of
$\ms I_n^z$ yields that
\begin{align}
\label{eq:nIn}
\begin{aligned}
& \mu_n(Q_{\varepsilon_n}(z))\, n\, \ms
I_n^z(\mu_n^z)\ge \sum_{x\in Q^n_{\varepsilon_n}(z)} \mu_n(x)\,
\frac{\big(-n\, \ms L_n^zu\big)(x)}{u(x)}\\
&\qquad= \sum_{x\in \mathring Q^n_{\varepsilon_n}(z)}\mu_n(x)\,
\frac{\big(-n\, \ms L_nu\big)(x)}{u(x)}+ \sum_{x\in \partial
Q^n_{\varepsilon_n}(z)}\mu_n(x)\, \frac{\big(-n\, \ms
L_n^zu\big)(x)}{u(x)}
\end{aligned}
\end{align}
for all $u: Q^n_{\varepsilon_n}(z)\to (0,\infty)$.

Consider the boundary terms, i.e., the second quantity on the
right-hand side of \eqref{eq:nIn}. Note that, for all
$G\in \ms C(\Xi)$ Lipschitz continuous,
\begin{align*}
\sup_{n\in \bb N}\sup_{x\in \Xi_n}\Big|\, \frac{\big(-\ms
L_n^ze^{nG}\big)(x)}{e^{nG(x)}} \,\Big|\le C 
\end{align*}
holds for some $C=C(F,G,d)>0$. Hence, by \eqref{eq:boundary-term}, we get
\begin{align}
\label{eq:bd-terms}
\Big|\,	\sum_{x\in \partial Q^n_{\varepsilon_n}(z)}
\mu_n(x)\,\frac{\big(-n\, \ms L_n^z e^{nG}\big)(x)}{e^{nG(x)}}
\,\Big|= O\Big(\frac{1}{n}\Big) \ . 
\end{align}

Consider now the first quantity on the right-hand side of
\eqref{eq:nIn}, with the idea of plugging a suitable $u=u_n=e^{n G}$
with $G\in \ms C^\infty(\bb R^d)$. For this purpose, consider the
$d\times d$ matrix obtained from ${\rm Hess}\, F(z)$ by keeping the same
eigenvectors, while taking only the negative of the negative part of
each eigenvalue: given the following matrix eigendecomposition
${\rm Hess}\, F(z)=U\Lambda U^{\sf T}$, with $U$ orthogonal and
$\Lambda$ diagonal matrices, define
$\color{blue} H:= U\Omega U^{\sf T} $, where $\color{blue}\Omega$ is a diagonal
matrix with diagonal entries given by
$\Omega_{ii} = -\, \max\{\:-\Lambda_{ii},0\:\}$.
Let 
\begin{equation}
\label{eq:definition-G}
{\color{blue}	G(x)}:=\frac{1}{2} (x-z)^{\sf T} H (x-z)\ .
\end{equation} 
A simple computation yields
\begin{align*}
&\frac{\big(-n\, \ms L_n e^{nG}\big)(x)}{e^{nG(x)}}\;
=\;  4n\sum_{i=1}^d  \sinh\left(\frac{1}{2}\partial_i
F(x)-\frac{1}{2}\partial_i G(x)\right)\sinh\left(\frac{1}{2}\partial_i
G(x)\right)\\ 
&\; -\; \frac{1}{2}\sum_{i=1}^d\left\{ \partial^2_{ii}F(x)\,
\cosh\left(\frac{1}{2}\partial_i F(x)\right)+
\partial^2_{ii}\left(2G-F\right)(x)\, \cosh\left(\frac{1}{2}\partial_i
(2G-F)(x)\right) \right\}\\ 
&\;  +\; O\left(\frac{1}{n}\right)
\; =: \; {\color{blue}n\,\ms R(x) + \ms S(x)} + O\left(\frac{1}{n}\right)\ .
\end{align*}
Therefore, 
\begin{align*}
&\liminf_{n\to \infty} \sum_{x\in \mathring
Q^n_{\varepsilon_n}(z)}\mu_n(x)\, \frac{\big(-n\, \ms L_n
e^{nG}\big)(x)}{e^{nG(x)}}\\ 
&\quad\ge \liminf_{n\to \infty}  \sum_{x\in \mathring
Q^n_{\varepsilon_n}(z)}\mu_n(x)\, n\,\ms R(x) +\liminf_{n\to \infty}
\sum_{x\in \mathring Q^n_{\varepsilon_n}(z)}\mu_n(x)\, \ms S(x)\ . 
\end{align*}

Since $\ms S\in \ms C(\Xi)$, $\ms S\ge 0$, $\varepsilon_n
>\varepsilon/2$,  and $\mu_n\to \mu= \sum_{z\in \ms P} \omega(z)\,
\delta_z$, we obtain 
\begin{equation*}
\liminf_{n\to \infty} \sum_{x\in \mathring
Q^n_{\varepsilon_n}(z)}\mu_n(x)\, \ms S(x) \ge \omega(z)\,\ms S(z) =
-\omega(z)\, {\rm Tr}(H)\ , 
\end{equation*}
where the last step follows by the definition of $G$ in
\eqref{eq:definition-G} (in particular, $\partial_i G(z)=0$ and
$\partial_{ii}G(z)=H_{ii}$) and the fact that $z\in \ms P$. Since
$-{\rm Tr}(H) = \zeta(z)$ (see \eqref{eq:zeta}), in view of the
definition \eqref{eq:I0} of $\ms J^{(0)}$, we are done as soon as we
show
\begin{align}
\label{final-liminf}
\lim_{\epsilon \to 0} \limsup_{n\to \infty}\sum_{x\in \mathring
Q^n_{\varepsilon_n}(z)} \mu_n(x)\,n\, \big|\, \ms R(x)\,\big|
\; =\; 0 \; . 
\end{align}

Since $z\in \ms P$ and $F\in \ms C^2(\Xi)$, there exists $C=C(F)>0$
such that
$\left|\partial_i F(x)\right| + \left|\partial_i G(x)\right|\le
C\,|x-z|$, for all $x\in Q^n_{\varepsilon_n}(z)$, $i=1,\ldots, d$.
Hence, since for any $b>0$ there exists $C(b)<\infty$ such that
$|\,\sinh (a) \, | \le C(b)$ and 
$|\,\sinh (a) - a\, | \le C(b) |a|^3$ for all $|a|\le b$,
\begin{equation*}
\big|\, \ms R(x)\,\big| \;\le\;
\Big|\, \sum_{i=1}^d \big[\, \partial_i F(x)
-\partial_i G(x)\big]\, \partial_i G(x) \,\Big| \;+\; C_0 \, |x-z|^3
\end{equation*}
for some finite constant $C_0$ whose value may change from line to
line. By condition (F1), $F$ belongs to $\ms C^{2,1}$. Hence, since
$z$ is a critical point of $F$, by a Taylor expansion
\begin{equation*}
\Big|\, \partial_i F(x) - \sum_{j=1}^d
\partial^2_{ij}F(z)\, (x_j-z_j)\, \Big|\;\le\; C_0 |x-z|^2\;.
\end{equation*}
A similar expansion holds for $G$. Hence, as
$|\partial_i F(x)| + \left|\partial_i G(x)\right|\le C\,|x-z|$, and
$\partial^2_{ij}G(z) = H_{ij}$,
\begin{equation*}
\big|\, \ms R(x)\,\big| \;\le\;
\Big|\, \sum_{i=1}^d \sum_{j=1}^d
\big[\, \partial^2_{ij}F(z) - H_{ij} \, \big] (x_j-z_j)
\sum_{k=1}^d H_{ik}  (x_k-z_k)
\,\Big| \;+\; C_0 \, |x-z|^3\;.
\end{equation*}
Since $H$ is symmetric, we may rewrite the previous sum as
\begin{align*}
& \sum_{j,k=1}^d (x_j-z_j)(x_k-z_k)\, \Big(\sum_{i=1}^d
\big\{ \, [ {\rm Hess}\, F(z)\,]_{ij} \,-\, H_{ij}\,\big\} \,
H_{ki}\, \Big) \\
&\quad=(x-z)^{\sf T}\left[\, {\rm Hess}\, F(z)-H\right]\,  H \:(x-z)\;.
\end{align*}
This expression vanishes because, by definition of $H$,
${\rm Hess}\, F(z) \,H \,=\, H^2$. This proves that
$|\, \ms R(x)\, | \;\le\; C_0 \, |x-z|^3$. Hence, \eqref{final-liminf}
follows from Corollary \ref{cor:finite-energy}.

\smallskip\noindent \emph{$\Gamma$-limsup.} 
 We must prove the
inequality for all $\mu\in \ms P(\Xi)$ such that
$\ms J^{(0)}(\mu)<\infty$, i.e., of the form
$\mu=\sum_{z\in \ms P}\omega(z)\, \delta_z$. Furthermore, since
$\ms I_n$ is convex and $\ms J^{(0)}$ is linear on its support, it
suffices to consider $\mu=\delta_z$, $z\in \ms P$.

Fix a critical point $z \in \ms P$ and, for notational convenience, assume $z=0$. Recalling the matrix eigendecomposition ${\rm Hess}_F(0)=U\Lambda U^{\sf T}$ from the proof of the \emph{$\Gamma$-liminf} inequality above, define the following $d\times d$ matrix $\color{blue}W:=U\Sigma U^{\sf T}$, where $\color{blue}\Sigma$ is diagonal and such that $\Sigma_{ii}:= \max\{\:-\Lambda_{ii},0\:\}$.
Let 
\begin{equation}
	{\color{blue}K(x)}:= - x^{\sf T} W x\ ,
\end{equation}
and note that ${\rm Hess}_F(0)+2W\in \bb R^{d\times d}$ is positive-definite by (F3), with eigenvalues given by the absolute value of those of ${\rm Hess}_F(0)$.
Next, we introduce:
\begin{itemize}
	\item a sequence $\color{blue}\varepsilon_n$ satisfying $n^{1/3}\ll \varepsilon_n^{-1}\ll n^{1/2}$;
	\item  a smooth cutoff function $\color{blue}\varphi:\Xi\to [0,1]$ with compact support strictly contained in $Q_1(0)$, satisfying the following two conditions: for all functions $\ms B\in \ms C(\Xi)$ and for all positive-definite $d\times d$ matrices $T$, letting ${\color{blue}\varphi_n(x)}:=\varphi(\varepsilon_nx)$, 
	\begin{equation}\label{eq:cutoff1}
		\lim_{n\to \infty} \frac{\sum_{x\in \Xi_n}e^{-n\, x^{\sf T}T x}\, \big(\, \varphi_n(x)\,\big)^2\,\ms B(x)}{\sum_{x\in \Xi_n}e^{-n\, x^{\sf T}T x}\, \big(\, \varphi_n(x)\,\big)^2}= \ms B(0)\ ,
	\end{equation}
	\begin{equation}\label{eq:cutoff2}
		\lim_{n\to \infty}	\frac{\sum_{x\in \Xi_n}e^{-n\, x^{\sf T}Tx}\,n\, \big(\, \varphi_n(x+\tfrac{e_i}{n})-\varphi_n(x)\,\big)^2}{\sum_{x\in \Xi_n} e^{-n\, x^{\sf T}Tx}\big(\, \varphi_n(x)\,\big)^2}=0\ .
	\end{equation} 
\end{itemize}

We now construct a recovery sequence $\mu_n$ for $\mu=\delta_0$.  Let $g_n:\Xi_n\to \bb R$ be given by
\begin{equation}
	{\color{blue}g_n(x)}:= e^{\frac{n}{2}K(x)}\varphi_n(x) \ ,
\end{equation}
and define ${\color{blue}\mu_n}:= g_n^2\, \pi_n/A_n$, where ${\color{blue}A_n}:= \sum_{y\in \Xi_n} g_n(y)^2\, \pi_n(y)$ denotes the normalization constant.

As a consequence of these definitions, we have that $\mu_n\to \mu=\delta_0$ in $\ms P(\Xi)$. Indeed, for all $\ms B\in \ms C(\Xi)$, we have
\begin{align*}
	&\sum_{x\in \Xi_n}\mu_n(x)\, \ms B(x) =\frac{\sum_{x\in \Xi_n}e^{-n \left(F(x)-F(0)\right)+ n K(x)}\big(\,\varphi_n(x)\,\big)^2\, \ms B(x)}{\sum_{x\in \Xi_n}e^{-n \left(F(x)-F(0)\right)+ n K(x)}\big(\,\varphi_n(x)\,\big)^2}\\
	&\qquad=\frac{\sum_{x\in Q_n}e^{- n L(x)}\big(\,\varphi_n(x)\,\big)^2\, \ms B(x)\left(1+o(\varepsilon_n^3 n)\right)}{\sum_{x\in Q_n}e^{- n L(x)}\big(\,\varphi_n(x)\,\big)^2\left(1+o(\varepsilon_n^3n)\right)}\ ,
\end{align*}
where we abbreviated ${\color{blue}L(x)}:= \frac{1}{2}x^{\sf T}{\rm Hess}_F(0) x - K(x)$ and ${\color{blue} Q_n}:= Q^n_{\varepsilon_n}(0)$.
Since  $L(x)$ is strictly quadratic, by \eqref{eq:cutoff1}, this expression converges, as $n\to \infty$, to $\ms B(0)$, thus, proving the claim.

We now prove $\lim_{n\to \infty}n\, \ms I_n(\mu_n)=\ms J^{(0)}(\mu)$, with $\mu=\delta_0$. After a simple manipulation, $n\, \ms I_n(\mu_n)$ reads as follows:
\begin{align*}
	n\, \ms I_n(\mu_n)&= \frac{\sum_{x \in \Xi_n}\sum_{i=1}^d \pi_n(x)\, R_n(x,x+\tfrac{e_i}{n})\, n\, \big(\, g_n(x+\tfrac{e_i}{n})-g_n(x)\,\big)^2}{A_n}\\
	&\qquad=: \frac{\sum_{x\in Q_n}e^{-n\left(F(x)-F(z)- K(x)\right)}\, \Phi_n(x)}{\sum_{x\in Q_n}e^{-n\left( F(x)-F(z)- K(x)\right)}\,\big(\,\varphi_n(x)\,\big)^2}\ .
\end{align*}
In this formula,  ${\color{blue}\Phi_n(x)}$ denotes
\begin{equation*}
	\sum_{i=1}^d e^{-\frac{1}{2}\nabla^n_i F(x)+ \nabla^n_i K(x)}\,n\,\big[\, \varphi_n(x)\big(\,1-e^{-\frac{1}{2}\nabla^n_iK(x)}\,\big)+\big(\,\varphi_n(x+\tfrac{e_i}{n})-\varphi_n(x)\,\big) \,\big]^2\ ,
\end{equation*}
where we recall   $\nabla^n_i F(x)=n\, \big(\, F(x+\tfrac{e_i}{n})-F(x)\, \big)$; similarly for $\nabla^n_i K(x)$. Recall that $z=0$ is a critical point for both $F$ and $K$. Moreover, since both functions are twice differentiable in a neighbourhood of $z=0$, we get
\begin{equation*}
	\limsup_{n\to \infty}\max_{i=1,\ldots, d}\sup_{x \in Q_n} e^{-\frac{1}{2}\nabla^n_i F(x)+ \nabla^n_i K(x)}\le 1\ .
\end{equation*}
Therefore,
\begin{equation}\label{eq:limsup}
	\limsup_{n\to \infty} n\, \ms I_n(\mu_n)\le \limsup_{n\to \infty} \frac{
		\sum_{x\in Q_n}e^{- nL(x)}\, \widetilde \Phi_n(x)\left(1+o(\varepsilon_n^3n)\right)
	}{
		\sum_{x\in Q_n}e^{- nL(x)}\big(\,\varphi_n(x)\,\big)^2\left(1+o(\varepsilon_n^3n)\right)
	}\ ,
\end{equation}
where
\begin{equation}\label{eq:square}
	{\color{blue}\widetilde \Phi_n(x)}:= \sum_{i=1}^d n\,\big[\, \varphi_n(x)\big(\,1-e^{-\frac{1}{2}\nabla^n_iK(x)}\,\big)+\big(\,\varphi_n(x+\tfrac{e_i}{n})-\varphi_n(x)\,\big) \,\big]^2\ .	
\end{equation}
By expanding the square  in \eqref{eq:square}, we obtain three terms:
\begin{align*}
	\Psi_{n,1}(x)&:=	\sum_{i=1}^d n\,\big[\, \varphi_n(x)\big(\,1-e^{-\frac{1}{2}\nabla^n_iK(x)}\,\big)\big]^2
	\\
	\Psi_{n,2}(x)&:=\sum_{i=1}^d n\big[\,\varphi_n(x+\tfrac{e_i}{n})-\varphi_n(x)\,\big]^2\\
	\Psi_{n,3}(x)&:= 2\sum_{i=1}^d n\big[\, \varphi_n(x)\big(\,1-e^{-\frac{1}{2}\nabla^n_iK(x)}\,\big)\big]\big[\,\varphi_n(x+\tfrac{e_i}{n})-\varphi_n(x)\,\big]\ ,
\end{align*}
 yielding a sum of three expressions on the right-hand side of \eqref{eq:limsup}. Let us consider the limit of the first one of these expressions:
\begin{align*}
	& \frac{
		\sum_{x\in Q_n}e^{- nL(x)}\, \Psi_{n,1}(x)\left(1+o(\varepsilon_n^3n)\right)
	}{
		\sum_{x\in Q_n}e^{- nL(x)}\big(\,\varphi_n(x)\,\big)^2\left(1+o(\varepsilon_n^3n)\right)
	}\\
	&\quad=
	\frac{
		\sum_{x\in Q_n}e^{- nL(x)}\, \big(\,\varphi_n(x)\,\big)^2 \sum_{i=1}^d n \big[\,1-e^{-\frac{1}{2}\nabla^n_i K(x)}\,\big]^2\left(1+o(\varepsilon_n^3n)\right)
	}{
		\sum_{x\in Q_n}e^{- nL(x)}\big(\,\varphi_n(x)\,\big)^2\left(1+o(\varepsilon_n^3n)\right)
	}\ ,
\end{align*}
and show that it converges to the function $\zeta$ given in \eqref{eq:zeta} evaluated at $z=0$. 
This would conclude the proof of the \emph{$\Gamma$-limsup} inequality because the second one of these expressions vanishes as $n\to \infty$ by \eqref{eq:cutoff2}, thus, the third one as well by  H\"older's inequality.

Recall  $L(x)=\frac{1}{2} x^{\sf T}\left({\rm Hess}_F(0)+2W\right)x$. Since ${\rm Hess}_F(0)+2W$ is positive-definite and $\varepsilon_n^{-1}=o(n^{1/2})$, \eqref{eq:cutoff1} ensures
\begin{align*}
	&	\lim_{n\to \infty} \frac{
		\sum_{x\in Q_n}e^{- nL(x)}\, \big(\,\varphi_n(x)\,\big)^2 \sum_{i=1}^d n \big[\,1-e^{-\frac{1}{2}\nabla^n_i K(x)}\,\big]^2\left(1+o(\varepsilon_n^3n)\right)
	}{
		\sum_{x\in Q_n}e^{- nL(x)}\big(\,\varphi_n(x)\,\big)^2\left(1+o(\varepsilon_n^3n)\right)
	}\\
	&=\lim_{n\to \infty} \frac{
		\sum_{x\in Q_n}e^{- nL(x)} \sum_{i=1}^d n \big[\,\frac{1}{2}\partial_i K(x)\,\big]^2
	}{
		\sum_{x\in Q_n}e^{- nL(x)}
	}= \lim_{n\to \infty} \frac{\sum_{x\in Q_n} e^{-nL(x)}\, n\, x^{\sf T}W^2x}{\sum_{x\in Q_n} e^{-nL(x)}}\ .
\end{align*}
By rewriting this last fraction as an approximation  over the grid $\bb Z^d/\sqrt{n}$ of a Gaussian integral, the above limit  coincides with $\sum_{i=1}^d \frac{1}{|\xi_i(0)|}(\max\{\:-\xi_i(0),0\:\})^2$, i.e., the function $\zeta$ given in \eqref{eq:zeta} evaluated at $z=0$. This proves the \emph{$\Gamma$-limsup} inequality and, thus, concludes the proof of the proposition.
\end{proof}

\subsection{$\Gamma$-convergence of $n\, e^{d_pn}\, \ms I_n$}
\label{sec6.2}

We turn to the last terms of the expansion. The proof is based on
Theorem \ref{mt1}. The first condition of this theorem assumes that
the rate functional $\ms I_n$ $\Gamma$-converges to a functional whose
$0$-level set consists in the convex combinations of a finite number
of Dirac measures. In the present context of a random walk in a
potential field, this condition is not satisfied by $\ms I_n$, but by
$n\, \ms I_n$. To adjust the model to the hypotheses of the theorem we
need to change the time-scale.

Let $Y^{(n)}_t$ be the $\Xi_n$-valued Markov chain defined by
$\color{blue} Y^{(n)}_t = X^{(n)}_{tn}$. As we speeded-up the chain by
$n$, the level two large deviations rate functional of the chain
$Y^{(n)}_t$, denoted by $\color{blue} \ms K_n$, is simply given by
$n\ms I_n$: $\ms K_n = n\, \ms I_n$. By Proposition \ref{pr:step-1-RW},
condition (H0), with $\ms J^{(0)}$ playing the role of $\ms I^{(0)}$,
is satisfied by the sequence $\ms K_n$. Recall the definition of the
functional $\ms J^{(p)}$ introduced just above the statement of
Theorem \ref{mt3}.

\begin{lemma}
\label{lf1}
Assume that conditions {\rm (F1) -- (F6)} are in force. Then,
conditions {\rm (H0) -- (H5)} are fulfilled by the Markov chain
$Y^{(n)}_t$ and its associated large deviations rate functional
$\ms K_n$.
\end{lemma}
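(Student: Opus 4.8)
The plan is to verify conditions (H0)--(H5) one by one for the sped-up chain $Y^{(n)}_t = X^{(n)}_{tn}$, drawing on the metastability results of \cite{lmt2015} recalled in Subsection \ref{ss6.1}. Condition (H0) is already established: by Proposition \ref{pr:step-1-RW}, $\ms K_n = n\,\ms I_n$ $\Gamma$-converges to $\ms J^{(0)}$, whose zero-level set, by \eqref{eq:I0}--\eqref{eq:zeta}, consists exactly of the convex combinations of the Dirac measures $\delta_{z}$ for $z$ ranging over the critical points where $\zeta$ vanishes, i.e.\ the local minima $\mc M = \{m_1,\dots,m_{\mf n}\}$; indeed $\zeta(z) = \sum_i \max\{-\xi_i(z),0\}$ vanishes precisely when ${\rm Hess}\,F(z)$ is positive semidefinite, which under (F3) means $z$ is a local minimum. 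So $\ms M = \mc M$ and $S_1 = \{1,\dots,\mf n\}$ as required.

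Next I would treat (H1)--(H3), which are the purely combinatorial/metastable conditions. The time-scales for $Y^{(n)}_t$ are $\theta^{(p)}_n = e^{d_p n}$ (one factor of $n$ having been absorbed into the speeding-up), which are strictly ordered since $d_1 < \cdots < d_{\mf q}$; the partitions $\mf V_p$ are the valleys $\{\ms V^{p,j}_n = \mc W^{m_{p,j}}_n : j\in S_p\}$ together with $\Delta^{(p)}_n$. By \cite[Theorem 2.4]{lmt2015}, recalled here, the projected trace process $\bb X^{p,n}_t$ converges to the $S_p$-valued chain $\bb X^{(p)}_t$ with rates $\bs r_p(i,j)$ of \eqref{v23}; this is exactly the statement that the limit \eqref{01a} exists and equals $r^{(p)}(i,j) = \bs r_p(i,j)$, up to identifying the mean rates $r^{\mf V_p}_n$ with the jump rates of the trace process (Remark \ref{r-2}). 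The positivity requirement in (H1) holds because $\bb X^{(p)}_t$ is not the null chain: by \eqref{v23}, the minima in $\mc M_p \setminus \mc M_{p+1}$ are transient states with strictly positive outgoing rates. For (H2), I would observe that by the structure noted after \eqref{v23}, the closed irreducible classes of $\bb X^{(p)}_t$ are exactly the singletons $\{k\}$ with $m_{p,k}\in\mc M_{p+1}$, so their number is $|J_{p+1}| = \mf n_{p+1}$, and by the definition of the valleys via \eqref{59} (each $\mc W^{m_{p,k}}_n$ with $k$ transient flows into the unique well of its component at the next height), identity \eqref{59} holds. Condition (H3): at the last scale $p = \mf q$, the set $\mc M_{\mf q+1} = \varnothing$, so $\bb X^{(\mf q)}_t$ has no transient states and is irreducible, giving a unique closed class.

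The remaining conditions (H4a), (H4b), (H5) are the analytic ones, and I expect (H5) to be the main obstacle. For (H4b): by construction $\ms V^{1,j}_n = \mc W(m_j)\cap\Xi_n$, where $\mc W(m_j)$ is the connected component of $\{F < F(m_j)+\varepsilon\}$ containing $m_j$; since $m_j$ is a non-degenerate local minimum this component contains a Euclidean ball $B_\delta(m_j)$ for some $\delta>0$, and shrinking $\varepsilon$ makes the $\mc W(m_j)$ pairwise disjoint, so (H4b) holds with $\delta$ in place of $\epsilon$. For (H4a): $\pi_n$ is the Gibbs measure $Z_{F,n}^{-1}e^{-nF}$, and a Laplace/Varadhan estimate shows that the mass outside the union of small neighbourhoods of the global picture of wells is exponentially small — more precisely, since $\{F\le h\}$ is connected and contains $\mc M$ (condition (F6)), and $\bigcup_j \mc W(m_j)$ captures all the minima, $\pi_n(\ms V^{(1)}_n)\to 1$; and $\pi_n$ conditioned to $\mc W(m_j)\cap\Xi_n$ concentrates at the unique minimizer $m_j$, i.e.\ $\pi^{1,j}_n \to \delta_{m_j}$ weakly, again by Laplace asymptotics using the non-degeneracy (F3). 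Finally, for (H5) I would use the crude bound of Lemma \ref{l04}: it suffices to exhibit $\beta_n \prec \theta^{(1)}_n = e^{d_1 n}$ with $\beta_n = {\rm diam}\,\ms V^{1,j}_n \cdot \max_{(x,y)\in\ms E} (c^{1,j}_n(x,y))^{-1}$. The graph diameter of $\ms V^{1,j}_n$ is $O(n)$ (it sits inside a bounded set discretized at scale $1/n$), while $c^{1,j}_n(x,y) = \pi^{1,j}_n(x) R_n(x,y)$; since $R_n(x,y) = e^{-\frac n2(F(y)-F(x))}$ and $\pi^{1,j}_n(x) = e^{-n(F(x)-\min_{\mc W(m_j)}F)}/(\text{normalization})$, one has $c^{1,j}_n(x,y)^{-1} \le C e^{n\max_{\mc W(m_j)}(F - F(m_j))} \le C e^{n\varepsilon}$ because inside $\mc W(m_j)$ the potential stays below $F(m_j)+\varepsilon$. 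Choosing $\varepsilon < d_1$ from the outset (which is allowed since $d_1 = \min_k(h - F(m_k)) > 0$ and $\varepsilon$ was free to be taken small) gives $\beta_n \le C n e^{n\varepsilon} \prec e^{d_1 n} = \theta^{(1)}_n$, so (H5) holds. Assembling these verifications completes the proof.
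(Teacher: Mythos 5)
Your verification is correct and follows essentially the same route as the paper: (H0) from Proposition \ref{pr:step-1-RW}, (H1) from the metastability estimates of \cite{lmt2015} with $\theta^{(p)}_n=e^{d_p n}$ (the extra factor $n$ being absorbed by the speed-up), (H2)--(H3) from the absorbing/transient structure of $\bb X^{(p)}_t$ encoded in \eqref{v23}, (H4) from the construction of the valleys together with Laplace asymptotics, and (H5) from Lemma \ref{l04} using that $F$ oscillates by at most $\varepsilon<d_1$ inside each $\mc W(m_j)$. The only imprecision is in (H1): the Skorohod convergence of \cite[Theorem 2.4]{lmt2015} is a \emph{consequence} of the mean-rate asymptotics \eqref{01a}, not equivalent to them, so one should cite the estimates that yield \eqref{01a} directly (the paper points to \cite[Lemma 6.3]{lmt2015} and equations (2.4), (2.9) there); this is a citation-level issue rather than a mathematical gap.
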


\begin{proof}
As observed above, by Proposition \ref{pr:step-1-RW}, condition (H0)
is fulfilled. We turn to conditions (H1) -- (H5).  By \cite[Lemma
6.3]{lmt2015}, (the definition of $\beta_m$ appearing in the statement
of this lemma is given just above equation (2.8)) and equations (2.9)
and (2.4) in \cite{lmt2015}, condition (H1) of Section \ref{sec1} is
fulfilled for $\theta^{(p)}_n = e^{d_pn}$. Note that  in comparison to \cite{lmt2015}, there is  a
factor $n$ less here because the
process $Y^{(n)}_t$ has already been speeded-up by $n$.

We turn to Condition (H2). The tree structure of the wells for the
dynamics considered here is quite simple. For each $1\le p\le \mf q$,
the well $\ms V^{p,j}_n$ is a connected component of the set
$\{x\in \Xi: F(x) < h\} \cap \Xi_n$. Morevoer, by \eqref{v23}, for
each $j\in S_p$, either $\ms V^{p,j}_n = \ms V^{p+1,k}_n$ for some
$k\in S_{p+1}$ or $\ms V^{p,j}_n \subset \Delta_{p+1}$.  Condition
(H3) holds in view of the definition \eqref{v23} of the jump rates
$\bs r_{\mf q}$ and from the fact that $\mc M_{\mf q+1}=\varnothing$.

Condition (H4a) holds because a neighbourhood of each global minimum
of $F$ is contained in a valley. Condition (H4b) is satisfied by
construction. Finally, condition (H5) follows from Lemma \ref{l04}.
\end{proof}

Next result is a consequence of Theorem \ref{mt1} and the previous lemma.

\begin{corollary}
\label{lf2}
Assume that conditions {\rm (F1) -- (F6)} are in force. Then, for each
$1\le p \le \mf q$, $e^{d_pn} \, \ms K_n$ $\Gamma$-converges to
$\ms J^{(p)}$. Moreover, for each $0\le p<\mf q$, the $0$-level set of
$\ms J^{(p)}$ corresponds to the set where $\ms J^{(p+1)}$ is finite,
and the $0$-level set of $\ms J^{(\mf q)}$ is a singleton.
\end{corollary}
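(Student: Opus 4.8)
The plan is to obtain the corollary as a direct application of Theorem \ref{mt1}, run not for the original chain $X^{(n)}_t$ but for the accelerated chain $Y^{(n)}_t = X^{(n)}_{tn}$, whose level-two large deviations rate functional is $\ms K_n = n\,\ms I_n$. The first step is to invoke Lemma \ref{lf1}: it guarantees that $Y^{(n)}_t$ and $\ms K_n$ satisfy hypotheses {\rm (H0)--(H5)}, with the metastable partitions $\mf V_p$ formed by the valleys $\ms V^{p,j}_n = \mc W^{m_{p,j}}_n$, the number of time-scales equal to the number $\mf q$ of distinct well depths, the speeds $\theta^{(p)}_n = e^{d_p n}$, and the $S_p$-valued limiting chain of condition (H1) equal to $\bb X^{(p)}_t$ with jump rates $\bs r_p$ from \eqref{v23}. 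In particular, by Proposition \ref{pr:step-1-RW}, condition (H0) holds with $\ms I^{(0)} = \ms J^{(0)}$ and the finite set $\ms M$ being the set of local minima $\{m_1,\dots,m_{\mf n}\}$ of $F$.

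The second step is to check that the functionals produced by Theorem \ref{mt1} are precisely the $\ms J^{(p)}$ of the statement. The theorem delivers functionals $\ms I^{(p)}$ of the form \eqref{o-83b}, built from the generator $\bb L^{(p)}$ of the limiting chain in (H1) and from the measures $\pi^{(p)}_j$ defined recursively in \eqref{09}. Since that limiting chain is $\bb X^{(p)}_t$, its generator is the $\bb L_p$ appearing just above the statement of Theorem \ref{mt3}, so $\bb I^{(p)}$ coincides with the $\bb J^{(p)}$ used to define $\ms J^{(p)}$. Moreover, because $\pi^{1,j}_n \to \delta_{m_j}$ by (H4a), the base measures of the recursion \eqref{09} are $\pi^{(1)}_j = \delta_{m_j}$, so the $\pi^{(p)}_j$ built by the general machinery are exactly those entering the definition of $\ms J^{(p)}$. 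Hence $\ms I^{(p)} = \ms J^{(p)}$ for every $1\le p\le \mf q$.

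With these identifications, the three assertions of the corollary are read off from Definition \ref{def1} as supplied by Theorem \ref{mt1}: item (b), applied with $\theta^{(p)}_n\,\ms K_n = e^{d_p n}\,\ms K_n$, gives the $\Gamma$-convergence to $\ms J^{(p)}$; item (c) gives that, for $0\le p<\mf q$, the $0$-level set of $\ms J^{(p)}$ is exactly the set on which $\ms J^{(p+1)}$ is finite; and item (d) gives that the $0$-level set of $\ms J^{(\mf q)}$ is a singleton (the measure $\sum_{j\in S_{\mf q}} M^{(\mf q)}_1(j)\,\pi^{(\mf q)}_j$, cf.\ Remark \ref{rm2}). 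The only non-routine ingredient is Lemma \ref{lf1} itself, which rests on the capacity computations of \cite{lmt2015}; once it is in force, no further work is needed, so the main (and modest) obstacle is simply the bookkeeping that matches the abstract limiting objects of Section \ref{sec1} with the explicit chains, measures, and speeds described in Section \ref{ss6.1}.
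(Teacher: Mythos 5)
Your proposal is correct and follows exactly the paper's route: the paper derives this corollary directly from Theorem \ref{mt1} applied to the speeded-up chain $Y^{(n)}_t$ and its rate functional $\ms K_n$, with Lemma \ref{lf1} supplying hypotheses (H0)--(H5) and the speeds $\theta^{(p)}_n = e^{d_p n}$. The extra bookkeeping you include (identifying the abstract $\ms I^{(p)}$ of \eqref{o-83b} with $\ms J^{(p)}$ via the limiting chain $\bb X^{(p)}_t$ and the base measures $\pi^{(1)}_j=\delta_{m_j}$) is left implicit in the paper but is exactly what is needed.
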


\subsection{Proof of Theorem \ref{mt3}}

Condition (a) of Definition \ref{def1} is clearly satisfied. To prove
condition (b), notice that, by Lemma \ref{lem:zero-expansion} and
Proposition \ref{pr:step-1-RW}, $\ms I_n$, $n \ms I_n$
$\Gamma$-converge to $\ms J$, $\ms J^{(0)}$, respectively. By Lemma
\ref{lf1}, $e^{d_pn} \, \ms K_n$ $\Gamma$-converges to $\ms J^{(p)}$,
$1\le p\le \mf q$. Hence, since $\ms K_n = n \ms I_n$,
$n\, e^{d_pn} \, \ms I_n$ $\Gamma$-converges to $\ms J^{(p)}$.

We turn to condition (c). By definition of $\ms J$ and $\ms G$, the
$0$-level set of $\ms J$ consists of convex combinations of Dirac
measures supported at elements of $\mc P$, the set of critical points
of $F$. By \eqref{eq:I0}, \eqref{eq:zeta}, this set corresponds to the
set where $\ms J^{(0)}$ is finite. By Corollary \ref{lf2}, condition
(c) holds for $0\le p<\mf q$ as well as condition (d). This completes
the proof. \qed

\subsection*{Acknowledgments}

C.~L.\ has been partially supported by FAPERJ CNE E-26/201.207/2014, by
CNPq Bolsa de Produtividade em Pesquisa PQ 303538/2014-7.  R.~M.\ has
been supported by CNPq, grant Universal no.\ 403037/2021-2. F.~S.\ wishes
to thank IMPA for the very kind and warm hospitality during his stay,
as well as for partial support. The same author acknowledges partial
support from the Lise Meitner fellowship, Austrian Science Fund (FWF):M3211.

\end{document}